\documentclass[11pt,a4paper]{elsarticle}
\usepackage[left=2cm,right=2cm,top=1.5cm,bottom=2cm]{geometry}
\biboptions{authoryear,round}
\usepackage{graphicx} 
\usepackage{float}
\usepackage{tikz, xcolor, pgfplots}
\usepackage{makecell, pifont, dsfont} 
\usepackage{booktabs}
\usetikzlibrary{shapes.geometric, arrows.meta}
\usepackage{todonotes}
\usepackage{amsmath}
\allowdisplaybreaks
\usepackage{comment}
\usepackage{amssymb}
\usepackage{amsthm}
\usepackage{todonotes}
\usepackage{booktabs,caption, mathtools}
\usepackage{hyperref}
\hypersetup{hidelinks}
\newtheorem{thrm1}{Theorem}
\newtheorem{thrm2}{Theorem}
\newtheorem{thrm3}{Theorem}
\newtheorem{thrm4}{Theorem}
\newtheorem{thrm5}{Theorem}

\newtheorem{thrm7}{Theorem}

\newtheorem{corollary}[thrm1]{Corollary}
\newtheorem{remark}[thrm2]{Remark}
\newtheorem{proposition}[thrm3]{Proposition}
\newtheorem{theorem}[thrm4]{Theorem}
\newtheorem{definition}[thrm5]{Definition}

\newtheorem{lemma}[thrm7]{Lemma}

\DeclareMathOperator*{\argmax}{arg\,max}
\DeclareMathOperator*{\argmin}{arg\,min}
\usepackage{algorithm,subfigure}
\usepackage{algpseudocode, marvosym}

\begin{document}

\begin{frontmatter}

\title{A Framework for Eliminating Paradoxical Orders in European Day-Ahead\\[0.5em]Electricity Markets  through Mixed-Integer Linear Programming Strong Duality}

\author[1]{Zhen Wang}\corref{cor1}
\ead{zhenwa@kth.se}
\author[1]{Mohammad Reza Hesamzadeh} 
\ead{mrhesa@kth.se}
\author[2]{Shudian Zhao}
\ead{shudian@kth.se}
\author[2]{Jan Kronqvist}
\ead{jankr@kth.se}
\cortext[cor1]{Corresponding author}
\affiliation[1]{organization={KTH Royal Institute of Technology, Department of Electrical Engineering and Computer Science},
            city={SE-100 44, Stockholm},
            country={Sweden}}
\affiliation[2]{organization={KTH Royal Institute of Technology, Department of Mathematics},
            city={SE-100 44, Stockholm},
            country={Sweden}}
            
\begin{abstract}
The presence of integer variables in the European day-ahead electricity market renders the social welfare maximization problem non-convex and non-differentiable, making classical marginal pricing theoretically inconsistent. Existing pricing mechanisms often struggle to balance revenue adequacy with incentive compatibility, typically relying on discriminatory uplift payments or suffering from weak duality.
Leveraging the Augmented Lagrangian Duality (ALD) framework, which establishes strong duality for Mixed-Integer Linear Programming (MILP), this paper proposes a novel ALD pricing mechanism. We analytically prove that this mechanism is inherently incentive-compatible, aligning centralized dispatch with individual incentives without requiring side payments. Notably, we demonstrate that the ALD price signals intrinsically eliminate Paradoxically Rejected Orders (PROs) and Paradoxically Accepted Orders (PAOs) for supply orders. For the demand side, a sufficient condition is introduced to further guarantee revenue adequacy, resulting in a transparent and financially consistent settlement system.
To ensure computational tractability, we modify the Surrogate Absolute-Value Lagrangian Relaxation (SAVLR) method to efficiently compute the exact penalty coefficients and optimal Lagrangian multipliers. Numerical experiments on illustrative examples and the Nordic 12-area electricity market model confirm the superior economic properties of the ALD pricing mechanism and the tractability of the modified SAVLR algorithm.
\end{abstract}
%
\begin{keyword}
OR in energy; Augmented Lagrangian and duality; Non-convex pricing; Decomposition algorithms; Incentive compatibility
\end{keyword}
\end{frontmatter}
\section{Introduction}
\label{sec:intro}
The Nominated Electricity Market Operator (NEMO) provides price signals to coordinate dispatch. While convex market models possess Walrasian equilibrium and are solvable in polynomial time \citep[Section 3.2]{ahunbay2024solving}, the introduction of integer variables, which represent indivisible physical realities like start-up costs and complex "all-or-nothing" bid structures, renders the social-welfare optimization problem non-convex. This non-convexity creates a duality gap where classical marginal prices cannot guarantee that all market orders remain revenue-adequate. Consequently, current pricing mechanisms rely on ex-post uplift payments, which are often discriminatory and lack transparency \citep{madani2018convex}.

\subsection{Literature Review}
The foundational structure of the European day-ahead energy market, encompassing diverse bid types, complex operational constraints, and welfare formulations for orders, is comprehensively detailed by \citet{martin2014strict,chatzigiannis2016european}. Research works have consistently sought to make the model both comprehensive and computationally tractable. For instance, \citet{vlachos2016comparison} introduced flexible hourly profile orders and compared a computationally efficient Linear Programming (LP) model with a flexible yet computationally demanding Mixed Complementarity Problem (MCP) formulation. To enhance compatibility with state-of-the-art solvers, \citet{sleisz2019new} proposed new MILP formulations for minimum income, scheduled stop, and load gradient conditions. Furthermore, \citet{ilea2018european} utilized a Mixed-Integer Quadratically Constrained Programming (MIQCP) model to analyze the correlation between Paradoxically Accepted Orders (PAOs) and Paradoxically Rejected Orders (PROs), while \citet{kuang2019pricing} incorporated convex quadratic deliverability costs for renewable integration, though their pricing approach can inadvertently generate negative prices.

\subsubsection{Pricing Mechanisms and the Uplift Problem}
Due to inherent non-convexity, classical pricing mechanisms often result in negative profits for market orders, necessitating frameworks that ensure the market orders remain ``in-the-money.''
\begin{itemize}
    \item \textbf{IP and modified IP Pricing:} The Integer Programming (IP) pricing mechanism \citep{o2005efficient} solves the centralized problem and fixes binary variables to their optimal values, utilizing dual variables for marginal pricing and uplift payments. The modified IP \citep{bjorndal2008equilibrium} improves upon this by fixing specific variables and generating valid support inequalities to provide a non-volatile, non-discriminatory price function for pure integer programming problems.
    \item \textbf{Alternative Rules:} Other methods include uniform pricing for buyers and sellers separately to minimize compensation costs \citep{toczylowski2009new}, and the recently proposed ``markup'' mechanism by \citet{ahunbay2024solving}, where sellers receive a price signal $p^\alpha$ and buyers pay by another price $(1+\alpha)p^\alpha $. Although this mechanism reduces uplift payment, it requires specific rounding techniques. More recently, the IP and Min-MWP (make-whole payment) rule \citep{ahunbay2025pricing} was introduced to minimize side payments while maintaining congestion signals. A critical comparison of mechanisms up to 2016 is provided by \citet{liberopoulos2016critical} and extended by \citet{azizan2020optimal} through an Equilibrium-Constrained (EC) pricing mechanism.
\end{itemize}

\subsubsection{Convex Hull Pricing (CHP) and Long-Run Impacts}
Convex Hull Pricing (CHP) is widely utilized in US markets to minimize uplift while maintaining uniform prices \citep{gribik2007market}. Analyses by \citet{stevens2024some} and long-run market studies by \citet{byers2023long} suggest that CHP performs optimally regarding social welfare and producer surplus transfer. Technically, \citet{hua2016convex} proved that formulating the convex hull is equivalent to solving a Lagrangian relaxation problem. While subgradient and cutting-plane methods \citep{wang2013subgradient} can compute an approximation price, the exact convex hull formulation remains computationally difficult. Specialized solutions exist for specific generators, such as those in the Midcontinent Independent System Operator (MISO) system \citep{yu2020extended}, or for thermal units \citep{knueven2022computationally}. Although Dantzig-Wolfe decomposition \citep{andrianesis2021computation} and extreme point methods \citep{yang2019unified} can find exact convex-hull prices, they face heavy computational burdens in large-scale multi-period models.

\subsubsection{Paradoxical Orders and Investment Incentives}
The long-term health of the electricity market depends heavily on pricing stability and reliable investment incentives. Research comparing capacity expansion and unit commitment models by \citet{mays2021investment} suggests that pricing mechanisms significantly affect the equilibrium capacity mix. In the European context, avoiding PAOs is essential; however, the required revenue adequacy constraints are typically nonlinear. \citet{madani2015computationally,madani2017mip} developed efficient MIP formulations and linearized minimum income conditions without introducing auxiliary continuous or binary variables. Nevertheless, these methods struggle computationally when block orders dominate the market clearing. Later work by \citet{madani2018revisiting} generalized these developments as ``Minimum Profit'' (MP) conditions and proposed a decomposition procedure utilizing Benders cuts to handle complex ramping constraints.
\par
Despite these extensive advances, existing pricing mechanisms for non-convex markets fundamentally rely on weak duality, or ex-post uplift payments. The properties of these rules are linked to the optimality gap; for example, \citet{byers2022economic} observed that as the duality gap narrows, consumer surplus typically increases. Furthermore, from an investment perspective, linear pricing rules have generally been shown to be more efficient than nonlinear pricing mechanisms \citep{herrero2015electricity}. 
\par
To address these deficiencies, this paper proposes a transparent piecewise-linear pricing mechanism for the European day-ahead market.
Exploiting MILP strong duality under the ALD framework \citep{feizollahi2017exact}, the proposed
framework intrinsically eliminates paradoxical orders (a sufficient condition is needed for demand-side PAO elimination) without the need for
explicit revenue adequacy constraints within the optimization model.
\par
\textbf{The contribution of this paper:}
This paper proposes a novel pricing mechanism based on the Augmented Lagrangian Duality (ALD) approach for the European day-ahead market. Our contributions are:
\begin{itemize}
    \item \textbf{(C1) Mechanism Formulation and Economic Properties}: We propose a piecewise-linear pricing rule (ALD pricing) that eliminates PROs entirely and prevents PAOs for supply orders. If demand bids exceed supply offer prices, PAOs among demand orders are also eliminated. This mechanism is incentive-compatible and requires zero uplift payments due to transparent price signals. This mechanism generalizes classical Locational Marginal Pricing (LMP) by extending it from an LP to an MILP model, in which the penalty coefficient simply vanishes. Furthermore, this pricing mechanism can also provide congestion prices for Transmission System Operators (TSOs).
    \item \textbf{(C2) Algorithmic Innovation}: We modify the Surrogate Absolute-Value Lagrangian Relaxation (SAVLR) method \citep{bragin2018scalable} to close the MILP duality gap. Using the decomposable structure of the welfare maximization problem and motivated by recent results regarding the polynomial-time computation of exact penalty coefficients \citep{lefebvre2024exact}, we ensure that the proposed ALD mechanism is computationally tractable. 
\item \textbf{(C3) Comprehensive Comparative Analysis and Numerical Validation}: We extensively validate the proposed ALD mechanism to bridge theory and practice. First, using an established benchmark \citep{schiro2015convex}, we compare our mechanism with the existing schemes across the four dimensions: individual revenue adequacy, incentive compatibility, economic efficiency, and transparency. The analysis demonstrates that the ALD mechanism outperforms traditional benchmarks by successfully reconciling these often-conflicting economic properties (validating C1). Furthermore, through a stylized computation on the Nordic 12-area electricity market, we confirm not only the mechanism's welfare equivalence but also the computational tractability of our proposed algorithm at a practical scale (validating C2).
\end{itemize}
\begin{table}[htbp]
  \centering
  \caption{Summary of major pricing schemes and their properties.}
  \label{tab:pricing_comparison}
\scriptsize
  \begin{tabular}{cl|ccccc}
    \toprule
    \makecell[l]{\textbf{Scheme}\\ \textbf{/property}} &
    \makecell[l]{\textbf{Price form} \\ ${\bf p_i(q_i) = }$} & \makecell[l]{\textbf{Proposed for} \\ ${\bf c_i(q_i) = }$}& 
    \makecell[l]{\textbf{Market}\\ \textbf{clearing}}& \makecell{\textbf{Revenue}\\\textbf{Adequate}} & 
    \makecell{\textbf{Supports}\\ \textbf{competitive}\\\textbf{equilibrium}}& \makecell{\textbf{Economically}\\\textbf{Efficient}} \\
    \midrule
    \makecell{\textbf{Shadow}\\\textbf{pricing}} & ${\bf \lambda q_i}$ & {\bf Convex }& \ding{51} & \ding{51} & \ding{51} & \ding{51} \\
    \addlinespace[0.5em]
    \textbf{IP} & ${\bf \lambda q_i +u_i \mathds{1}_{q_i>0}}$&{\bf \makecell{Start-up\\ plus linear} }& \ding{51} & \ding{51} &\ding{51} & \ding{51} \\
    \addlinespace[0.5em]
    \textbf{CHP} & ${\bf \lambda q_i +u_i \mathds{1}_{q_i= q_i^*} }$ & {\bf \makecell{Start-up\\ plus linear}} & \ding{51} & \ding{51} & \ding{51} & \ding{55}\\
    \addlinespace[0.5em]
       \textbf{SLR} & $\lambda q_i$ & {\bf \makecell{Start-up\\ plus linear}} & \ding{51} & \ding{51} & \ding{55} & \ding{55}\\
       \addlinespace[0.5em]
          \textbf{PD} & {\bf $\lambda q_i$} & {\bf \makecell{Start-up\\ plus linear}} & \ding{51} & \ding{51} & \ding{55} & \ding{55}\\
              \addlinespace[0.5em]
     \textbf{EC}  & {\bf User-specified} &{\bf General}& \ding{51} & \ding{51} & \ding{51} & \ding{51} \\
    \addlinespace[1.0em]
     \makecell{\textbf{ALD}\\\textbf{(Proposed)}}  &  $\begin{smallmatrix} {\bf \lambda q_i+ \rho q_i^*} \\  {\bf -\rho |q_i - q_i^*|}\end{smallmatrix}$& {\bf \makecell{Start-up\\ plus linear}}& \ding{51} & \ding{51} & \ding{51} & \ding{51} \\
    \bottomrule
  \end{tabular}
\end{table}
According to the discussion above, we update \citep[p. 481, Table 1]{azizan2020optimal} by adding the ALD pricing mechanism as in Table \ref{tab:pricing_comparison}.
In addition, the proposed ALD pricing mechanism formulated in Table \ref{tab:pricing_comparison} has a formula: $\lambda q_i+ \rho q_i^*   -\rho |q_i - q_i^*|$, where $\lambda$ and $\rho$ are the ALD pricing signals to be introduced in the following sections, and $q_i^*$ is the optimal solution of the welfare-maximization problem. It is clear that the proposed ALD pricing mechanism is piecewise linear.
 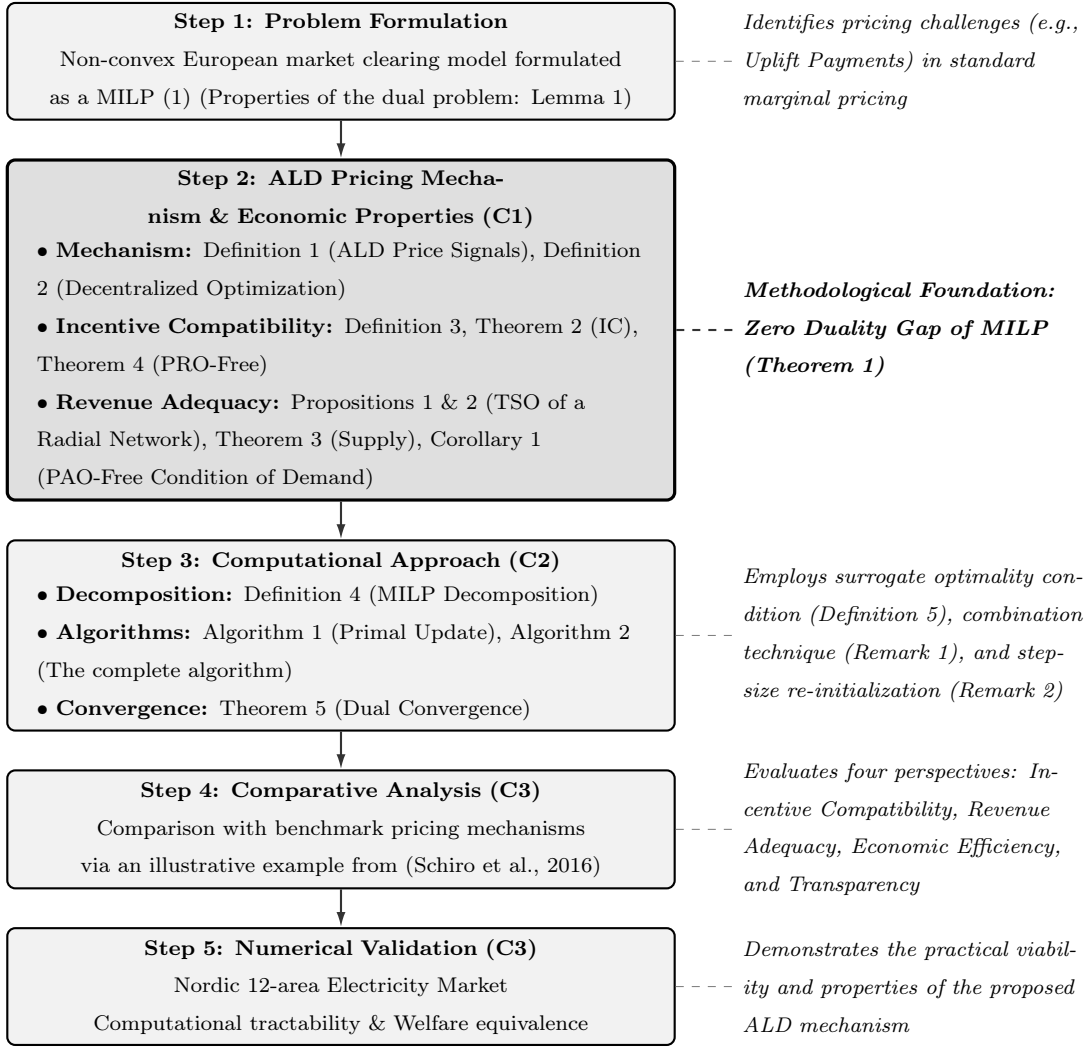
\begin{figure}[H]
\centering
\begin{tikzpicture}[
    box/.style={rectangle, draw=black, thick, fill=gray!10, text width=8.5cm, align=center, inner sep=4pt, rounded corners=3pt, font=\scriptsize}, 
    highlightbox/.style={rectangle, draw=black, very thick, fill=gray!25, text width=8.5cm, align=center, inner sep=4pt, rounded corners=3pt, font=\scriptsize},
    contextbox/.style={text width=4.5cm, font=\scriptsize\itshape, text=black, align=left, inner sep=2pt},
    arrow/.style={-{Latex[length=2mm, width=1.2mm]}, thick, draw=black!80} 
]

\node (step1) [box] {\textbf{Step 1: Problem Formulation} \\ Non-convex European market clearing model formulated as a MILP \eqref{MILP:original:one} (Properties of the dual problem: Lemma \ref{proper:dual:optimization:ALD})};

\node (step2) [highlightbox, below=0.5cm of step1] {
    \textbf{Step 2: ALD Pricing Mechanism \& Economic Properties (C1)} \\
    \vspace{4pt}
    \parbox{8cm}{\raggedright \scriptsize
    $\bullet$ \textbf{Mechanism:} Definition \ref{def:ald:pricing} (ALD Price Signals), Definition \ref{individual:optimization} (Decentralized Optimization) \\
    $\bullet$ \textbf{Incentive Compatibility:} Definition \ref{Incentive:Compatibility},  Theorem \ref{incentive:compatibility:ALD} (IC), Theorem \ref{no:pro} (PRO-Free) \\
    $\bullet$ \textbf{Revenue Adequacy:} Propositions \ref{revenue:tso} \&  \ref{TSO:revenue:adequate} (TSO of a Radial Network), Theorem \ref{individual:revenue:adequacy:supply:order} (Supply), Corollary \ref{coro:consumer} (PAO-Free Condition of Demand)
    }
};

\node (step3) [box, below=0.5cm of step2] {
    \textbf{Step 3: Computational Approach (C2)} \\
    \vspace{4pt}
    \parbox{8cm}{\raggedright \scriptsize
    $\bullet$ \textbf{Decomposition:} Definition \ref{sub:pro:definition} (MILP Decomposition) \\
    $\bullet$ \textbf{Algorithms:} Algorithm \ref{alg:primal:update} (Primal Update), Algorithm \ref{alg:com:penalty_coe} (The complete algorithm) \\
    $\bullet$ \textbf{Convergence:} Theorem \ref{convergence:modified:SAVLR} (Dual Convergence)
    }
};

\node (step4) [box, below=0.5cm of step3] {\textbf{Step 4: Comparative Analysis (C3)} \\ Comparison with benchmark pricing mechanisms \\ via an illustrative example from \citep{schiro2015convex}};

\node (step5) [box, below=0.5cm of step4] {\textbf{Step 5: Numerical Validation (C3)} \\ Nordic 12-area Electricity Market \\ Computational tractability \& Welfare equivalence};

\draw [arrow] (step1) -- (step2);
\draw [arrow] (step2) -- (step3);
\draw [arrow] (step3) -- (step4);
\draw [arrow] (step4) -- (step5);

\node (context1) [contextbox, right=0.8cm of step1] {Identifies pricing challenges (e.g., Uplift Payments) in standard marginal pricing};

\node (context2) [contextbox, right=0.8cm of step2, text=black] {\textbf{Methodological Foundation: Zero Duality Gap of MILP (Theorem \ref{exists:threshold})}};

\node (context3) [contextbox, right=0.8cm of step3] {Employs surrogate optimality condition (Definition \ref{Serro:condition}), combination technique (Remark \ref{modification:SAVLR}), and stepsize re-initialization (Remark \ref{step:re:initialization})};

\node (context4) [contextbox, right=0.8cm of step4] {Evaluates four perspectives: Incentive Compatibility, Revenue Adequacy, Economic Efficiency, and Transparency};

\node (context5) [contextbox, right=0.8cm of step5] {Demonstrates the practical viability and properties of the proposed ALD mechanism};

\draw [dashed, draw=black!50] (step1.east) -- (context1.west);
\draw [dashed, draw=black!80, thick] (step2.east) -- (context2.west); 
\draw [dashed, draw=black!50] (step3.east) -- (context3.west);
\draw [dashed, draw=black!50] (step4.east) -- (context4.west);
\draw [dashed, draw=black!50] (step5.east) -- (context5.west);

\end{tikzpicture}
\caption{Conceptual roadmap linking the paper's organization to the contributions.}
\label{fig:roadmap}
\end{figure}
\subsection{Paper organization and the conceptual roadmap}
The remainder of this paper is organized as follows. Section \ref{electricity:market:model} presents a mathematical formulation of the European day-ahead electricity market model with nonconvexities retained. In Section \ref{ALD:to:Euphemia}, we formulate this model into a compact MILP framework and apply the ALD approach. The rest of this section introduces the proposed ALD pricing mechanism and provides theoretical proofs of its core properties, including incentive compatibility, the elimination of paradoxical orders.
 The modified SAVLR algorithm, designed to compute the exact penalty coefficient and optimal Lagrangian multipliers with a convergence guarantee, is detailed in Section \ref{computational:method:zero:duality}. Section \ref{compare:ALD:and:others} provides a comparative analysis, evaluating the proposed ALD pricing mechanism against the main existing pricing rules using a standard case study. Section \ref{computational:results:ALD:stylized:example} presents the computational results of a stylized electricity market model to demonstrate the tractability of the algorithm and the economic advantages of the proposed ALD pricing mechanism. Finally, Section \ref{conclusion:future:work} concludes the paper and discusses avenues for future research. 
 \par
 To navigate the theoretical depth of this paper, Figure \ref{fig:roadmap} illustrates the conceptual roadmap of our study. To assist in understanding the ALD pricing mechanism, three illustrative examples and the proof of Lemma \ref{proper:dual:optimization:ALD} are available in the supplementary materials.
\section{MILP model of the European day-ahead electricity market}\label{electricity:market:model}
\subsection{Notations}
Mathematical Operators and Several Sets
\begin{itemize}
\item $\langle x,y \rangle$: Inner product of the vectors $x$ and $y$.
\item $\|y\|_1$: The $L_1$ norm of vector $y$, defined as $\sum_{i} |y_i|$.
\item $x \odot y$: Element-wise product (Hadamard) of vectors $x$ and $y$.
\item $\mathcal{A}, \mathcal{T}, \mathcal{L}, \mathcal{LS}$: Sets of bidding areas, time slots ($|\mathcal{T}|=24$), interconnections, and small line sets, respectively.
\end{itemize}
Order-Related Index Sets
\begin{itemize}
\item $\mathcal{O}_{E}, \mathcal{O}_{BO}, \mathcal{O}_{PB}, \mathcal{O}_{RB}, \mathcal{O}_{FHB}$: The sets of elementary orders, block orders, profile block orders, regular block orders, and flexible hourly orders, respectively, where $\mathcal{O}_{BO} = \mathcal{O}_{PB} \cup \mathcal{O}_{RB}$.
\item Subscripts $s$ and $d$ denote the supply and demand subsets of any set $\mathcal{O}$ (e.g., $\mathcal{O}_{E,s}, \mathcal{O}_{E,d}$, where $\mathcal{O}_{E}=\mathcal{O}_{E,s}\cup \mathcal{O}_{E,d}$).
\item $\mathcal{O}_{slg}$: The set of supply orders subject to load gradient conditions, where $\mathcal{O}_{slg} \subset \mathcal{O}_{E,s}$.
\item Subscripts $a$ and superscript $t$ denote subsets of any set $\mathcal{O}$ subject to the area $a$, or the time slot $t$, (e.g., $\mathcal{O}_{E,s,a}^t, \mathcal{O}_{E,d,a}^t$, where $\mathcal{O}_{E,s}=\bigcup_{a\in \mathcal{A}}\bigcup_{t\in \mathcal{T}}{\mathcal{O}_{E,s,a}^t}, \mathcal{O}_{E,d}=\bigcup_{a\in \mathcal{A}}\bigcup_{t\in \mathcal{T}}{\mathcal{O}_{E,d,a}^t}$).
\end{itemize}
Parameters
\begin{itemize}
\item $p_{a}^t(i), q_{a}^t(i)$: Price and quantity of order $i$ at area $a$ during period $t$.
\item $G_s^{dn}, G_s^{up}$: Decrease and increase gradient for supply order $s$ ($MW/min$).
\item $R^{H}, R^{D}$: Hourly ($H$) and daily ($D$) ramp limits for net injections, interconnections, or line sets.
\item $P^{ini}, F^{ini}$; $F_{lt}^{min}, F_{lt}^{max}$: Initial net injection and initial power flow at the start of the trading day; Minimum and maximum available transmission capacity for line $l$ at period $t$.
\item $H_{aa'}^{lt}$; $A_{LB}, A_{EG}$: Power Transfer Distribution Factor (PTDF) matrix for AC interconnections; Incidence matrices for linked block orders and exclusive groups.\end{itemize}
Decision Variables
\begin{itemize}
\item $x_{E,a}^{t}(i) \in [0, 1]$: Acceptance ratio for a continuous elementary order $i$ at area $a$ and time $t$.
\item $x_{RB,a}(i), x_{FHB,a}^t(i) \in \{0, 1\}$: Binary acceptance statuses for a regular and flexible hourly block order $i$ at area $a$ and time $t$.  
\item $u_{PB,a}(i) \in \{0, 1\}$: Binary execution status for a profile block order $i$.
\item $p_{at}, f_{lt}$: Net injection of area $a$ and power flow on interconnection $l$ during time period $(t-1,t)$.
\item $e_{aa'}^t$: Bilateral energy exchange between area $a$ and $a'$.
\item $x^t_a(i)$: A general order $i$ at area $a$ and time $t$.  (Introduced for ease of notation, either an element of $x_{RB,a}, x_{PB,a}, x_{FHB,a}^t$ or $x_{E,a}^t$).
\end{itemize}
\subsection{The electricity market model and the discrete optimization problem}
Note that while we adopt the standard acronym MILP (Mixed-Integer Linear Programming) for convention, our formulation is strictly a Mixed-Binary Linear Programming (MBLP) problem.
\par
{\em Unified Formulations:}
For each area $a \in \mathcal{A}$, we aggregate block order acceptance ratios into a single vector $x_{BO,a} = [x_{PB,d,a}, x_{RB,d,a}, x_{PB,s,a}, x_{RB,s,a}]^\top$, where $x_i \in \{0\} \cup [R_i^{min}, 1]$ for profile blocks ($i \in \mathcal{O}_{PB,a}$) and $x_i \in \{0, 1\}$ for regular blocks ($i \in \mathcal{O}_{RB,a}$). Correspondingly, the economic valuation vector for order type $j \in \{E, BO, FHB\}$ is defined as $c_{j,a}^t = p_{j,a}^t \odot q_{j,a}^t$ (in EUR). Here, $c_i^t \le 0$ for demand orders and $c_i^t \ge 0$ for supply orders. Note that for flexible hourly orders ($j = FHB$), $c_{FHB,a}$ is time-invariant.
\par
Social welfare maximization is formulated as a cost-minimization problem, with the model benchmark from \citep{chatzigiannis2016european}. To ensure the model reflects practical market operations, we incorporate the following configurations:
(i) Storage Assets: Utility-scale storage is decoupled into separate demand (charging) and supply (discharging) orders across different periods $(t_1, t_2)$.
 (ii) Cost Structure: Start-up costs are internalized within the supply offer prices of supply block orders.
 (iii) Operational Assumptions: All interconnections are treated as AC lines operated by non-profit entities.
In addition, the minimum income condition and the scheduled stop condition are not included in the model. 
The cost minimization problem is given by
\begin{subequations}\label{MILP:original:one}
\allowdisplaybreaks
\begin{align}
\min \quad & \sum_{a\in\mathcal{A}} \sum_{t\in\mathcal{T}} \Bigl( \langle c_{E,a}^t, x_{E,a}^t \rangle + \langle c_{BO,a}^t, x_{BO,a} \rangle + \langle c_{FHB,a}, x_{FHB,a}^t \rangle \Bigr), \label{obj:1} \\
\text{s.t.} \quad & p_a^t = \langle q_{E,a}^t, x_{E,a}^t \rangle + \langle q_{BO,a}^t, x_{BO,a} \rangle + \langle q_{FHB,a}, x_{FHB,a}^t \rangle, \quad \forall\, t\in\mathcal{T}, a\in\mathcal{T} ,\label{network:injection:equation} \\
& \sum_{a \in \mathcal{A}} p_a^t = 0, \quad p_a^t = \sum_{a' \in \mathcal{A}} (e_{aa'}^t - e_{a'a}^t), \quad \forall\, t\in\mathcal{T}, a\in\mathcal{A} ,\label{power:balance_flow} \\
& f_{lt} = \sum_{a, a'} H_{aa'}^{lt} e_{aa'}^t, \quad F_{lt}^{\min} \leq f_{lt} \leq F_{lt}^{\max}, \quad \forall\, l\in\mathcal{L}, t\in\mathcal{T}, \label{power:flow:limits} \\
& R_{PB,a}^{\min}(i)\, u_{PB,a}(i) \leq x_{PB,a}(i) \leq u_{PB,a}(i), \quad \forall\, i \in \mathcal{O}_{PB,a},\label{block:order:constr:1} \\
& x_{BO,a} \leq A_{LB} x_{BO,a}, \quad A_{EG} x_{BO,a} \leq 1, \quad \forall\, a\in\mathcal{A},\label{block:order:constr:2} \\
& \sum_{t \in \mathcal{T}} x_{FHB,a}^t(i) \leq 1, \quad \forall\, i \in \mathcal{O}_{FHB,a}, \label{block:order:constr:3} \\
& | q_{E,s,a}^{t}(i) x_{E,s,a}^{t}(i) - q_{E,s,a}^{t-1}(i) x_{E,s,a}^{t-1}(i) | \leq 60 G_s^{up/dn}, \forall\, i\in \mathcal{O}_{slg,a}, t\in \mathcal{T}\setminus\{1\} \label{load:gradient} \\
& | p_{at} - p_{a,t-1} - p_{at}^{H,ini} | \leq R_{at}^{H,up/dn}, \quad \forall \, a\in\mathcal{A}, t\in\mathcal{T}, \label{ramping:p:hour} \\
& | \sum_{t} p_{at} - p_a^{D,ini} | \leq R_a^{D, up/dn}, \quad \forall\, a\in\mathcal{A}, \label{ramping:p:daily}\\
& | f_{lt} - f_{l,t-1} - F_{lt}^{H,ini} | \leq R_{lt}^{H,up/dn}, \quad \forall\, l\in\mathcal{L}, t\in\mathcal{T}, \label{ramping:f:single} \\
& | \sum\limits_{l\in L_{ls}}{f_{lt}} - \sum\limits_{l\in L_{ls}}{f_{l,t-1}} - F_{lt}^{H,ini} | \leq R_{lt}^{H,up/dn}, \quad \forall\, ls\in LS, t\in\mathcal{T}. \label{ramping:f:Line:set} 
\end{align}
\end{subequations}
  \paragraph{Interpretations of the optimization problem \eqref{MILP:original:one}}
  \begin{itemize}
  \item Objective Function \eqref{obj:1}: Minimizes the total system costs (or maximizes social welfare), aggregating bids from Elementary orders $x_{E,a}^t$, Block Orders (BO) $x_{BO,a}$, and Flexible Hourly Orders (FHB) $x_{FHB,a}^t$.
  \item Power Balance and Network Physics \eqref{network:injection:equation}--\eqref{power:flow:limits}: Equation \eqref{network:injection:equation} calculates the net nodal injection $p_a^t$ by summing all cleared volumes. Market-wide balance and nodal flow conservation are given by \eqref{power:balance_flow}. Transmission constraints \eqref{power:flow:limits} enforce physical capacity limits ($F^{\min/\max}$) based on the PTDF matrix $H$.
  \item Constraints of Orders \eqref{block:order:constr:1}--\eqref{block:order:constr:3}: Capturing the complex logic of European orders. 
  \begin{itemize}
        \item Profile and Regular Blocks: \eqref{block:order:constr:1} restricts profile block acceptance ratios to $x \in \{0\} \cup [R^{min}, 1]$. Regular blocks are treated as a special case where $R^{min} = 1$, resulting in a binary outcome.
        \item \textit{Dependencies and Exclusivity}: \eqref{block:order:constr:2} enforces parent-child requirements for linked orders, while \eqref{block:order:constr:3} ensures mutual exclusivity within predefined groups.
        \item \textit{Flexible Orders}: \eqref{block:order:constr:3} confirms that flexible hourly orders are scheduled in at most one time period across the trading day.
    \end{itemize}
        \item Operational Gradients: Constraints \eqref{load:gradient} impose load gradient limits on supply elementary orders.
    \item System Ramping: The set of inequalities \eqref{ramping:p:hour}--\eqref{ramping:f:Line:set}  characterizes multi-temporal ramping limits, including hourly and daily constraints on net injections (\eqref{ramping:p:hour}, \eqref{ramping:p:daily}), individual interconnection flow \eqref{ramping:f:single}, and flows for specified line sets \eqref{ramping:f:Line:set}.
  \end{itemize}
Due to the block orders in \eqref{block:order:constr:1}--\eqref{block:order:constr:3}, the standard marginal pricing often requires discriminatory uplift payments.
\section{The ALD-based pricing model}\label{ALD:to:Euphemia}
To simplify the application of the ALD approach, we first formulate the constraints of \eqref{MILP:original:one} into compact feasible sets.
\paragraph{Decomposed Variable and the Feasible Set}We decompose the decision variables into block vectors $x_E$, $x_{BO}$, $x_{FHB}$, $p$ representing elementary, block, flexible hourly orders, and the net injections across all areas and periods. The feasible sets of each decomposed variable are formulated as follows:
\begin{itemize}
\item $X_E$: Defined by $x_{E,a}^{t}(i) \in [0, 1]$ and load gradient constraints \eqref{load:gradient}.
\item $X_{BO}$: $x_{PB,a}(i)$ with disjunctive set $\{0\} \cup [R_{BO}^{min}, 1]$ (e.g, \eqref{block:order:constr:1} and  $x_{RB,a}(i) \in \{0,1\}$, subject to logic constraints \eqref{block:order:constr:2}.
\item $X_{FHB}$: $x^t_{FHB,a}(i) \in \{0, 1\}$ with temporal constraint \eqref{block:order:constr:3}.
\item $P$: $p_a^t$ defined by the power balance \eqref{power:balance_flow}, flow limits \eqref{power:flow:limits}, and all system ramp limits \eqref{ramping:p:hour}--\eqref{ramping:f:Line:set}, which is detailed in Section \ref{sec:deco:sub:surro:sub}.
\end{itemize}
The feasible set of the problem \eqref{MILP:original:one} is defined by $X \coloneqq X_E \times X_{BO} \times X_{FHB}$, where $X_E$ and $P$ are polyhedra; $X_{BO}$ and $X_{FHB}$ contain integer constraints, characterizing the problem as an MILP.
\paragraph{Compact formulation}Using aggregated notation, the optimization problem \eqref{MILP:original:one} is formulated as
\begin{subequations}\label{mix:integer:linear:program}
\begin{align}
\min_{x \in X, p \in P} \sum_{a \in \mathcal{A}} \sum_{t \in \mathcal{T}} & \left( \langle c_{E,a}^t, x_{E,a}^t \rangle + \langle c_{BO,a}^t, x_{BO,a} \rangle + \langle c_{FHB,a}, x_{FHB,a}^t \rangle \right) \label{obj:shorten} \\
\text{s.t. } \quad p_a^t = & \langle q_{E,a}^t, x_{E,a}^t \rangle + \langle q_{BO,a}^t, x_{BO,a} \rangle + \langle q_{FHB,a}, x_{FHB,a}^t \rangle, \quad \forall~ a, t. \label{eq:global_balance}
\end{align}
\end{subequations}
where \eqref{eq:global_balance} represents the global coupling constraints that link the acceptance decisions of all orders with the net injection set $P$.
\paragraph{ALD Formulation}We now apply the ALD approach by dualizing the coupling constraints \eqref{eq:global_balance} with multipliers $\lambda_a^t$ and augmenting the objective with an $L_1$-norm penalty term with coefficient $\rho$. The ALD problem is defined as
\begin{equation}\label{mix:integer:linear:program:ALD}
\max_{\lambda \in \mathbb{R}^{|\mathcal{A}| \times |\mathcal{T}|}} z_\rho^{ALD}(\lambda) \coloneqq \max_{\lambda} \min_{x \in X, p \in P} L_\rho(x, p, \lambda),
\end{equation}
where the Augmented Lagrangian function $L_\rho$ is
\begin{equation}\label{central:obj}
\begin{aligned}
L_\rho(x, p, \lambda) := & \sum_{a, t} \left( \sum_{j \in {E, BO, FHB}} \langle c_{j,a}^t, x_{j,a}^t \rangle \right) + \sum_{a, t} \lambda_a^t \left( p_a^t - \sum_{j} \langle q_{j,a}^t, x_{j,a}^t \rangle \right) \\
& + \rho \sum_{a, t} \left\| p_a^t - \sum_{j} \langle q_{j,a}^t, x_{j,a}^t \rangle \right\|_1.
\end{aligned}
\end{equation}
\subsection{Strong duality of the ALD approach}
Strong duality is an important property in pricing mechanisms, as argued in Section \ref{sec:intro}. This section begins with a lemma that introduces the properties of the dual optimization problem for \eqref{mix:integer:linear:program:ALD}.
\begin{lemma}[Characterization of the MILP Dual]\label{proper:dual:optimization:ALD}
    The dual optimization problem $\max\limits_{\lambda \in \mathbb{R}^{|\mathcal{A}|\times |\mathcal{T}|}} {z_\rho^{ALD}(\lambda)}$ of \eqref{mix:integer:linear:program:ALD} is concave, where the objective function is piecewise linear and sub-differentiable.
\end{lemma}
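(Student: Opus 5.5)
The plan is to read $z_\rho^{ALD}(\lambda)$ as a pointwise minimum of affine functions of $\lambda$, and then invoke standard facts about such minima. Fix $\rho\ge 0$ and write the coupling residual as $r(x,p)\in\mathbb{R}^{|\mathcal{A}|\times|\mathcal{T}|}$, with $r_a^t(x,p)=p_a^t-\sum_j\langle q_{j,a}^t,x_{j,a}^t\rangle$. Then
\[
L_\rho(x,p,\lambda)=\langle c,x\rangle+\rho\|r(x,p)\|_1+\langle \lambda, r(x,p)\rangle .
\]
For each fixed feasible $(x,p)\in X\times P$ this is affine in $\lambda$. Hence $z_\rho^{ALD}(\lambda)=\inf_{(x,p)\in X\times P}L_\rho(x,p,\lambda)$ is an infimum of affine functions. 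It is therefore concave and upper semicontinuous, and so the dual problem $\max_\lambda z_\rho^{ALD}(\lambda)$ is the maximization of a concave function. This step needs no structure of $X$ or $P$.

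For piecewise linearity I will use the MILP structure. The set $X\times P$ is a finite union of polyhedra $Q_1,\dots,Q_K$. They are indexed by the finitely many fixings of the binary variables $u_{PB}$, $x_{RB}$, $x_{FHB}$; each disjunction $\{0\}\cup[R^{min},1]$ is a union of two intervals. Also $X_E$ and $P$ are polyhedra, and $P$ is bounded by the flow limits and ramping constraints. On each $Q_k$, the $L_1$ term is linearized in the standard way with auxiliary variables $s\ge r$, $s\ge -r$, giving the LP
\[
\phi_k(\lambda)=\min\{\langle c,x\rangle+\rho\langle \mathbf 1,s\rangle+\langle\lambda,r\rangle : (x,p)\in Q_k,\ -s\le r(x,p)\le s\}.
\]
Its feasible region is a fixed polyhedron independent of $\lambda$. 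Since all order variables lie in $[0,1]$ and $p$ is bounded, the region is pointed, and $s$ can be bounded without loss. So whenever the region is nonempty, the minimum is attained at one of finitely many vertices $v_1,\dots,v_{m_k}$, and $\phi_k(\lambda)=\min_i \langle \lambda, r(v_i)\rangle+\text{const}_i$. Consequently $z_\rho^{ALD}(\lambda)=\min_k\phi_k(\lambda)$ is a minimum of finitely many affine functions. It is thus a finite-valued, concave, polyhedral (piecewise linear) function on all of $\mathbb{R}^{|\mathcal{A}|\times|\mathcal{T}|}$.

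Sub-differentiability follows from standard convex analysis applied to the convex function $-z_\rho^{ALD}$, which is finite everywhere and so has a nonempty compact subdifferential at every point. Concretely, for any minimizer $(x^\lambda,p^\lambda)$ of $L_\rho(\cdot,\cdot,\lambda)$ and any $\mu$,
\[
z_\rho^{ALD}(\mu)\le L_\rho(x^\lambda,p^\lambda,\mu)=z_\rho^{ALD}(\lambda)+\langle \mu-\lambda, r(x^\lambda,p^\lambda)\rangle .
\]
So the residual $r(x^\lambda,p^\lambda)$ is a supergradient, and $\partial z_\rho^{ALD}(\lambda)$ is the convex hull of the residuals of the active vertices. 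This supergradient is exactly what the SAVLR method later uses.

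I expect the main obstacle to be justifying finiteness and attainment, that is, that the infimum is a minimum over finitely many points rather than $-\infty$. Without boundedness the inner problem could be unbounded for some $\lambda$, and then $z_\rho^{ALD}$ would be concave but only extended-valued. I will handle this by appealing explicitly to the boundedness of the acceptance ratios in $[0,1]$ and of $P$ through $F^{min}\le f\le F^{max}$ and the ramping constraints. With those bounds every $Q_k$ is a polytope, so its projection onto $(x,p)$ is compact, and the vertex argument above applies verbatim.
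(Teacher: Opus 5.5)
Your proof is correct. Its skeleton is the paper's: both treat $z_\rho^{ALD}$ as a pointwise minimum of functions affine in $\lambda$. The paper's concavity step, $\min L_\rho(\cdot,\cdot,\alpha\lambda_1+(1-\alpha)\lambda_2)\ge\alpha\min L_\rho(\cdot,\cdot,\lambda_1)+(1-\alpha)\min L_\rho(\cdot,\cdot,\lambda_2)$, is your ``infimum of affine functions'' observation written out.

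The real difference is in the piecewise-linearity and finiteness steps. The paper asserts that a minimum of ``a set of linear functions'' is piecewise linear, then infers sub-differentiability from concavity plus piecewise linearity. But $X\times P$ is a continuum, so that assertion alone only gives concavity. The paper also never checks that $z_\rho^{ALD}$ is finite; for suitable $\lambda$ with $\|\lambda\|_\infty>\rho$ it would be $-\infty$ if $P$ were unbounded.

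Your argument fills both gaps. Splitting $X\times P$ into finitely many polyhedra by fixing the binaries, then applying the LP-vertex argument, is exactly what reduces the family to finitely many affine pieces. Your appeal to the bounds on acceptance ratios and on $P$ supplies finiteness and attainment. The explicit supergradient $r(x^\lambda,p^\lambda)$ also ties the lemma to the surrogate direction $d^{k+1}$ used later in the modified SAVLR method. So the paper's version buys brevity, while yours actually proves the ``piecewise linear'' part of the claim.

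One small precision: if you keep the exchange variables $e_{aa'}^t$ inside the description of $P$, they need not be bounded, so the lifted LP region may contain lines and fail to be pointed. State the polytope and vertex arguments for $P$ as a set in $p$-space only, which is how the paper defines it.
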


Note that Lemma 1 is a classical result, and the proof has been omitted. The properties of the dual problem of a Lagrange relaxation of the MILP are stated in \citep[p.117]{bragin2015convergence}, and the additional penalty term does not influence these properties, see the detailed proof in the Supplement. 
\par
In the following, we will introduce a strong duality theory for the ALD approach described above for the MILP problem. In addition, readers interested in closing the duality gap can refer to \citep{chen2010extended,lefebvre2024exact}.  
Note that, unlike the augmented Lagrangian relaxation procedure, the approach of \citep{chen2010extended} requires only a penalty.
\begin{theorem}[Exact Penalty Coefficient]\label{exists:threshold}Let $(x^*, p^*)$ be an optimal solution to the MILP problem \eqref{mix:integer:linear:program} with optimal objective value $z^{MIP}$. Define $z_\rho^{ALD}(\lambda)$ as the optimal value of the dual problem \eqref{mix:integer:linear:program:ALD}.\begin{enumerate}\item If $\lambda^*$ maximizes the dual function, there exists a finite threshold $0 < \rho^* < \infty$ such that $z_{\rho}^{ALD}(\lambda^*) = z^{MIP}\ \forall \rho \geq \rho^*$, effectively closing the duality gap \citep[Theorem 4]{feizollahi2017exact}.\item For any arbitrary $\lambda \in \mathbb{R}^{|\mathcal{A}|\times |\mathcal{T}|}$, there exists a finite $\rho(\lambda) > 0$ such that $z_{\rho(\lambda)}^{ALD}(\lambda) = z^{MIP}$ \citep[Proposition 8]{feizollahi2017exact}.
\end{enumerate}
\end{theorem}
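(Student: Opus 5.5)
The plan is to prove both parts through the perturbation (value) function of \eqref{mix:integer:linear:program} with respect to the coupling constraints \eqref{eq:global_balance}. For a residual vector $r=(r_a^t)_{a,t}$, define
\begin{equation*}
\phi(r) \coloneqq \min\Bigl\{ \textstyle\sum_{a,t}\sum_j \langle c_{j,a}^t, x_{j,a}^t\rangle \;:\; x\in X,\ p\in P,\ p_a^t-\sum_j\langle q_{j,a}^t,x_{j,a}^t\rangle = r_a^t\ \ \forall a,t \Bigr\},
\end{equation*}
with $\phi(r)=+\infty$ when this set is infeasible, so that $\phi(0)=z^{MIP}$. Minimizing \eqref{central:obj} first over $(x,p)$ with fixed residual $r$ and then over $r$ gives
\begin{equation*}
z_\rho^{ALD}(\lambda)=\min_r \bigl\{\phi(r)+\langle\lambda,r\rangle+\rho\|r\|_1\bigr\}.
\end{equation*}
Taking $r=0$ yields weak duality, $z_\rho^{ALD}(\lambda)\le z^{MIP}$, for every $\lambda$ and every $\rho\ge 0$. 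Both statements therefore reduce to one claim: there is a constant $M<\infty$ such that
\begin{equation*}
\phi(r)\ge \phi(0)-M\|r\|_1 \quad \text{for all } r .
\end{equation*}
Once this holds, any $\rho\ge M+\|\lambda\|_\infty$ gives $\phi(r)+\langle\lambda,r\rangle+\rho\|r\|_1\ge\phi(0)$ for every $r$, so the duality gap closes. Part 2 then follows with $\rho(\lambda)=M+\|\lambda\|_\infty$ (increased by any positive amount to keep it strictly positive). Part 1 follows with $\rho^*=M+\|\lambda^*\|_\infty$, and the property "for all $\rho\ge\rho^*$" holds because $z_\rho^{ALD}$ is nondecreasing in $\rho$ and bounded above by $z^{MIP}$.

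To establish this Lipschitz-type lower bound, I will exploit the mixed-binary structure. The set $X_{BO}\times X_{FHB}$ involves finitely many binary configurations $k$; in the profile-block disjunctions $\{0\}\cup[R^{min},1]$ the integrality sits in $u_{PB}$. For each fixed $k$, the remaining problem is an LP whose value function $\phi_k(r)$ is polyhedral convex on its polyhedral domain $D_k$, and $\phi=\min_k\phi_k$. I assume the feasible region is bounded and the data are rational. This is natural here: all acceptance ratios lie in $[0,1]$, and $p$ is pinned by the flow limits and ramp constraints; it is also the standing hypothesis in \citep{feizollahi2017exact}. The configurations then split into two cases.
\begin{enumerate}
\item If $0\in D_k$, the LP $\phi_k(0)$ is feasible and bounded, so its dual has an optimal solution $\mu_k$. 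LP weak duality gives $\phi_k(r)\ge\phi_k(0)+\langle\mu_k,r\rangle\ge\phi(0)-\|\mu_k\|_\infty\|r\|_1$ on all of $D_k$. This is the step where one must check that the dual bound remains valid on the whole domain and not just near $0$; it does, because the dual objective is a global minorant of the primal value.
\item If $0\notin D_k$, then $\delta_k\coloneqq\min_{r\in D_k}\|r\|_1>0$, since $D_k$ is closed. Boundedness gives a finite lower bound $\underline\phi_k$ on $\phi_k$, so $\phi_k(r)\ge\phi(0)-\frac{(\phi(0)-\underline\phi_k)^+}{\delta_k}\|r\|_1$ for $r\in D_k$.
\end{enumerate}
Taking $M$ as the maximum of these finitely many constants proves the claim.

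The main obstacle is the second case, which is where the nonconvexity lives: an integer configuration that is infeasible at $r=0$ can become feasible for small residuals at a cost below $z^{MIP}$. A quadratic augmenting term would not dominate such a jump near $0$, whereas the $L_1$ penalty grows linearly and does. The argument hinges on $\delta_k>0$ and on the uniform lower bound $\underline\phi_k$. If $P$ were not bounded, I would instead rely on rationality and Meyer's theorem, which makes $\mathrm{conv}$ of the feasible set polyhedral and lets $\underline\phi_k$ be replaced by the recession analysis of \citep[Theorem 4]{feizollahi2017exact}; I would cite that argument rather than redo it.
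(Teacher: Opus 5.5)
Your proof is correct, but it takes a different route from the paper. The paper gives no argument of its own: it imports both parts from \citet[Theorem 4, Proposition 8]{feizollahi2017exact}, where the result is proved for general MILPs, including general-integer variables, under rationality and finite-optimum hypotheses.

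You instead reduce both parts to a single calmness bound $\phi(r)\ge\phi(0)-M\|r\|_1$ on the perturbation function, and you prove it directly from the mixed-binary structure. Here $\phi$ is the minimum of finitely many LP value functions $\phi_k$. Configurations feasible at $r=0$ are bounded below globally by an LP dual minorant. The others are kept away from $0$ by the positive distance $\delta_k$ to the closed polyhedron $D_k$. Each step checks out.

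Your route buys explicit constants and information the citation hides:
\begin{enumerate}
\item $\lambda=0$ alone already yields an exact penalty at $\rho=M$;
\item one may take $\rho(\lambda)=M+\|\lambda\|_\infty$ for every $\lambda$;
\item the hypothesis in part 1 that $\lambda^*$ maximizes the dual function plays no role in the existence of a threshold; it only affects how small $\rho^*$ can be.
\end{enumerate}
What it gives up is generality, since finiteness of the set of integer configurations is essential to it.

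Your standing hypotheses are stronger than you need. Since the objective involves only acceptance variables in $[0,1]$, you have $\phi_k(r)\ge-\|c\|_1$ uniformly, so neither boundedness of $P$ nor rational data is required. The closing fallback to Meyer's theorem can therefore be dropped.

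Your attribution of boundedness to \citet{feizollahi2017exact} is also off. Their stated contribution is precisely to move beyond the bounded pure-integer setting of Boland and Eberhard.

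Finally, in part 1 take $\rho^*=\max\{M+\|\lambda^*\|_\infty,\varepsilon\}$ for some $\varepsilon>0$, as you did in part 2, to meet the strict positivity in the statement.
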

Theorem \ref{exists:threshold} establishes that for a given $\lambda^*$, a threshold $\rho^*$ exists that eliminates the duality gap. Furthermore, for any $\rho > \rho^*$, the optimal solution set of the ALD-based problem \eqref{mix:integer:linear:program:ALD} coincides with that of the original MILP \eqref{mix:integer:linear:program}. In this context, $\lambda^*$ is said to support an exact penalty representation for the MILP problem \citep[Proposition 1]{feizollahi2017exact}.
\par Here, a fundamental question arises, for $\lambda^*\in \mathbb{R}^{|\mathcal{A}|\times |\mathcal{T}|}$, one can set a very large penalty coefficient to close the duality gap. However, such a penalty coefficient will cause computational issues, and more importantly, it is meaningless when interpreting in practice, see the discussion in \citep{lefebvre2024exact}. Hence, our aim is to determine a valid penalty coefficient $\rho \in [\rho^*, \overline{\rho}]$ that is larger than the threshold $\rho^*$, but remains as close to this lower bound as computationally feasible.
\subsection{The proposed ALD pricing mechanism}\label{ALD:pricing:proposing}
Assume that the exact penalty coefficient $\rho$ for the ALD problem \eqref{mix:integer:linear:program:ALD} has been determined such that $\rho \in [\rho^*, \overline{\rho}]$, ensuring strong duality. Let $\Lambda$ denote the set of optimal dual variables $\lambda^{t,*} \in \mathbb{R}^{|\mathcal{A}| \times |\mathcal{T}|}$ for problem \eqref{MILP:original:one}. Since problem \eqref{mix:integer:linear:program} is an MILP, its optimal solution $(x^*, p^*)$, which does not need to be unique, can be efficiently obtained using commercial solvers such as Gurobi or CPLEX. Based on these parameters, we define the ALD pricing mechanism as follows:
\begin{definition}[ALD Price Signals]\label{def:ald:pricing}
At each time slot $t \in \mathcal{T}$, the locational marginal price (LMP) of the bidding area $a \in \mathcal{A}$, denoted by $LMP_a^t$, is defined as
\begin{equation}\label{LMP:ALD:Pricing}
\begin{aligned}
LMP_a^t&\coloneqq \eta_a^t + \min_{\lambda_a^{t,*}\in \Lambda}{\lambda_i^{t,*}},\\
s.t.& ~ \lambda_a^{t,*} +\eta_a^t\in \Lambda,\eta_a^t\leq \rho,~ a\in \mathcal{A},~t\in \mathcal{T},
\end{aligned}
\end{equation}
where $\eta_a^t$ is a parameter used to derive congestion price signals, while excessive transmission incentives are mitigated by the upper bound $\rho$.  
\par
For a supply(demand) order $i$, its revenue(cost) consists of the commodity revenue(cost): $LMP_a^t q^t(i)$\\$ x^{t}(i)$, a non-convex reward(charge): $\rho q^t(i) x^{t,*}(i)$, and a deviation penalty: $\rho |q^t(i) (x^t(i) - x^{t,*}(i))|$.
\par
The Transmission System Operator (TSO) collects congestion revenue based on the Financial Transmission Right (FTR), defined as $FTR_{aa'}^t \coloneqq  LMP_{a'}^t- LMP_a^t $, where power flows through line $l$ from node $a$ to node $a'$.
\end{definition}
According to Def. \ref{def:ald:pricing}, the aggregate daily surplus for an  order $i$ is
\begin{equation}\label{surplus:formula}
\begin{aligned}
S_i = \sum_{t \in \mathcal{T}} &\left[ LMP_a^t \, q^t(i) x^{t}(i) - \rho |q^t(i) \left(x^t(i) - x^{t,*}(i)\right)|\right. \\
&\left.+~ \rho q^t(i) x^{t,*}(i)-  p^t(i) q^t(i) x^{t}(i) \right].
\end{aligned}
\end{equation}
The overall daily welfare for the TSO, assuming no flow deviation, is $\sum_{l \in \mathcal{L}}\sum_{t \in \mathcal{T}} f_{lt}^* ~FTR_l^t$, where $f_{lt}^*$ is the flow of line $l$ during the time period $[t-1,t]$.
\par
Note that in Eq.~\eqref{LMP:ALD:Pricing}, the minimal optimal dual variables are selected to ensure the economic efficiency of the pricing mechanism.
Furthermore, the introduction of $\eta_a^t$ facilitates congestion price discovery when network constraints are active, and its selection is problem-dependent.
\subsubsection{Decentralized optimization, incentive compatibility and congestion rents}\label{individual:rationality:property}
The problem in \eqref{mix:integer:linear:program} represents the centralized market-clearing formulation, from which we derive the individual optimization problems. For notational consistency, $\lambda_a^{t,*}$ denotes the locational marginal price (LMP) as determined by the expression \eqref{LMP:ALD:Pricing}.
\begin{definition}[Decentralized Optimization]\label{individual:optimization}
The individual optimization problem for each order $i$ in bidding area $a \in \mathcal{A}$ is formulated as follows:
\begin{equation}\label{individual:obj}
\begin{aligned}
    &\min_{x_a(i) \in X_i} \sum_{t\in\mathcal{T}} \left\{ c_a^t(i)x_a^t(i) - \lambda_a^{t,*} q_a^t(i)x_a^t(i) + \rho \left| q_a^t(i) \Delta x_a^t(i) \right| - \rho q_a^t(i)x_a^{t,*}(i) \right\},\\
    &\text{or equivalently } \\
    &\max_{x_a(i) \in X_i} \sum_{t\in\mathcal{T}} \left\{ \lambda_a^{t,*} q_a^t(i)x_a^t(i) - c_a^t(i)x_a^t(i) - \rho \left| q_a^t(i) \Delta x_a^t(i) \right| + \rho q_a^t(i)x_a^{t,*}(i) \right\}, \\
    &\text{s.t. }\qquad \Delta x_a^t(i) = x_a^t(i) - x_a^{t,*}(i), \quad \forall~ t \in \mathcal{T}, a \in \mathcal{A}, \\
    &\qquad\quad~~ \lambda_a^{t,*} \text{by Def. }  \ref{def:ald:pricing}, \quad \rho \in [\rho^*, \overline{\rho}],
    \end{aligned}
\end{equation}
where $X_i$ is the feasible region of $x_a(i) = [x_a^1(i),\cdots, x_a^{|\mathcal{T}|}(i)]$, and for block orders: $x_a^1(i)=\cdots= x_a^{|\mathcal{T}|}(i)$.
\end{definition}
 Note that in Eq. \ref{individual:obj}, the sets $X_i$ may exhibit interdependencies, as seen in the block order constraints \eqref{block:order:constr:2}--\eqref{block:order:constr:3}. For linked block orders, a hierarchical solving sequence is required: the optimization problems for parent orders must be solved before those for child orders. Furthermore, orders subject to exclusive group constraints are coupled and must be solved simultaneously within a single optimization framework to ensure feasibility.

Building on the definition of Incentive Compatibility in the literature, the theorem below establishes a key property inherent in the proposed ALD pricing mechanism.
\begin{definition}[Incentive Compatibility]\label{Incentive:Compatibility}
    A pricing mechanism is incentive-compatible if no market participant has a financial incentive to deviate from its allocated quantity (the centralized solution), \citep[Section 4.3]{milgrom2022linear}.
\end{definition}
\begin{theorem}[Incentive Compatibility of ALD]\label{incentive:compatibility:ALD}
The ALD pricing mechanism introduced in Def. \ref{def:ald:pricing} is incentive-compatible (IC). 
\end{theorem}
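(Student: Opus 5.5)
Our plan is to show that, for every order $i$ in area $a$, the centralized allocation $x_a^*(i)$ is an optimal solution of the decentralized problem \eqref{individual:obj} of Def.~\ref{individual:optimization}. By Def.~\ref{Incentive:Compatibility}, this is exactly incentive compatibility. The argument will run through the saddle-point structure provided by the exact penalty result, Theorem~\ref{exists:threshold}.

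\textbf{Step 1: transfer optimality to the augmented Lagrangian.} Fix $\lambda^*$ from Def.~\ref{def:ald:pricing} and $\rho\in[\rho^*,\overline{\rho}]$. By Theorem~\ref{exists:threshold}, $z_\rho^{ALD}(\lambda^*)=z^{MIP}$. Since $(x^*,p^*)$ is feasible for \eqref{mix:integer:linear:program}, the coupling residual $p^*-\sum_j\langle q_j,x_j^*\rangle$ vanishes. Hence $L_\rho(x^*,p^*,\lambda^*)=z^{MIP}$, which gives
\[
L_\rho(x^*,p^*,\lambda^*)=\min_{x\in X,\,p\in P}L_\rho(x,p,\lambda^*).
\]
So $(x^*,p^*)$ minimizes the augmented Lagrangian \eqref{central:obj} at $\lambda^*$.

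\textbf{Step 2: decompose the minimization.} Fix $p=p^*$ and write $p_a^{t,*}=\sum_j\langle q_{j,a}^t,x_{j,a}^{t,*}\rangle$. Then each residual equals $\sum_i q_a^t(i)\bigl(x_a^{*,t}(i)-x_a^t(i)\bigr)$. By the triangle inequality,
\[
\rho\Bigl|\sum_i q_a^t(i)\Delta x_a^t(i)\Bigr|\le \rho\sum_i |q_a^t(i)\Delta x_a^t(i)|.
\]
Therefore the separable function
\[
\widetilde L(x)=\sum_i\sum_t\Bigl\{c_a^t(i)x_a^t(i)-\lambda_a^{t,*}q_a^t(i)x_a^t(i)+\rho|q_a^t(i)\Delta x_a^t(i)|\Bigr\}+\text{const}
\]
satisfies $\widetilde L(x)\ge L_\rho(x,p^*,\lambda^*)\ge L_\rho(x^*,p^*,\lambda^*)=\widetilde L(x^*)$. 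Thus $x^*$ minimizes $\widetilde L$ over $X$.

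$\widetilde L$ is a sum of the individual objectives in \eqref{individual:obj}. The term $-\rho q_a^t(i)x_a^{t,*}(i)$ is constant in $x$ and does not affect the argmin. For any order, take a deviation $x(i)\in X_i$ and keep all other orders at $x^*$. This unilateral deviation must remain in $X$. Its change in $\widetilde L$ equals the change in order $i$'s own objective, which is therefore nonnegative. For linked or exclusive-group orders, we apply the same argument to the jointly solved group described after Def.~\ref{individual:optimization}.

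\textbf{Step 3: handle the congestion shift $\eta$.} Def.~\ref{def:ald:pricing} uses $LMP_a^t=\eta_a^t+\min\lambda$ with the constraint $\lambda+\eta\in\Lambda$. So the price actually charged is itself an optimal multiplier in $\Lambda$, and Steps 1--2 apply to it verbatim.

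\textbf{Main obstacle.} The delicate point is Step 2: making the bound from the coupled $\ell_1$ penalty to the separable per-order penalties rigorous. It requires fixing $p$ at $p^*$, which is legitimate because we only need an upper bound on $L_\rho$ at a feasible $p$. It also requires checking that unilateral deviations stay within $X$ when linked-block or exclusive-group constraints couple orders. A secondary subtlety is that Theorem~\ref{exists:threshold} is stated for $\lambda^*$ a dual maximizer, so we must confirm that the selected $\lambda+\eta\in\Lambda$ and that $\rho\ge\rho^*$ holds for that particular multiplier.
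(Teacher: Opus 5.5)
Your proposal is correct and follows essentially the paper's route. Strong duality from Theorem~\ref{exists:threshold} makes $(x^*,p^*)$ a minimizer of $L_\rho(\cdot,\cdot,\lambda^*)$; the triangle inequality bounds the coupled $\ell_1$ penalty by the per-order penalties, so $x^*$ minimizes the separable aggregate of the individual objectives over $X$; and a unilateral deviation in $X$ then gives the contradiction. Fixing $p=p^*$ directly is a cleaner version of the paper's detour through $\Psi(p)$ and the congestion-rent bookkeeping, which only shift the aggregate by constants, and your explicit handling of $\eta$ and of coupled block-order groups spells out points the paper leaves implicit.
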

\begin{proof}
Consider the minimization problem in \eqref{individual:obj}, then the objective function represents the additive inverse of the order $i$'s surplus. We establish incentive compatibility by proving that the market-clearing solution $\{x_a^{t,*}(i)\}_{t\in \mathcal{T}}$ minimizes the individual optimization \eqref{individual:obj}. Let $\sum_{a, t, i}$ denote $\sum_{a,t}\sum_{i \in \mathcal{O}_a^t}$.

\paragraph{Step 1: The Strong Duality of the Centralized ALD} 
Under the ALD pricing mechanism with $\lambda^*$ and $\rho\in [\rho^*, \overline{\rho}]$, the dual problem \eqref{mix:integer:linear:program:ALD} achieves its maximum at $z^{MIP}$, [see Theorem \ref{exists:threshold}]. For any optimal decision variable $(x^*, p^*)$, the centralized objective \eqref{central:obj} satisfies
\begin{equation}\label{central:L:def}
    L_{\rho}(x, p, \lambda^*) = \sum_{a, t, i} c_a^t (i)x_a^t(i) + \sum_{a, t} \left( \lambda_a^{t,*} r_a^t(x,p) + \rho |r_a^t(x,p)| \right) = z^{MIP}.
\end{equation}
where $r_a^t(x,p) = p_a^t - \sum_{i \in \mathcal{O}_a^t} q_a^t(i)x_a^t(i)$ is the nodal power balance residual.

\paragraph{Step 2: Aggregation of the Individual Optimization Problem}
\par 
 Reformulate the objective function denoted as $f_{a,i}(x_a(i))$ in \eqref{individual:obj} by adding $\sum_{t}{\lambda_a^{t,*}q_a^t(i)x_a^{t,*} (i)} - \sum_{t}{\lambda_a^{t,*}q_a^t(i)x_a^{t,*} (i)}$ (=0):
\begin{equation}\label{individual:obj:reformulate}
    \begin{aligned}
        &\sum_{t}{c_a^t (i)x_a^t(i)} +  \sum_{t}{\lambda_a^{t,*}q_a^t(i)x_a^{t,*} (i)} - \sum_{t}{\lambda_a^{t,*} q_a^t(i)x_a^t (i)}\\
    &+\rho\sum_{t}{\left|q_a^t(i) x_a^t(i)-q_a^t(i)x_a^{t,*}(i)\right|} 
    - \rho\sum_{t}{q_a^t(i)x_a^{t,*}(i)}
    -  \sum_{t}{\lambda_a^{t,*}q_a^t(i)x_a^{t,*} (i)}. \\
    \end{aligned}
\end{equation}
By aggregating the objective functions in \eqref{individual:obj:reformulate} across all orders, we observe that the summation of the second to last term satisfies
\begin{equation}\label{constant:term}
\begin{aligned}
\rho\sum\limits_{a, t, i}{q_a^t(i)x_a^{t,*}(i)} = \sum\limits_{t}\rho\sum\limits_{a}\sum\limits_{i}{q_a^t(i)x_a^{t,*}(i)}=0,
\end{aligned}
\end{equation}
which is due to the power balance equation \eqref{power:balance_flow} $\forall ~t\in \mathcal{T}$. 
The summation of the final term in \eqref{individual:obj:reformulate}, specifically $-\sum_{a, t, i }{\lambda_a^{t,*}q_a^t(i)x_a^{t,*}(i)}$, represents the total congestion revenue allocated to the transmission system operator (TSO). This relationship is formally established and demonstrated in Proposition \ref{revenue:tso}.
Therefore, the summation of Eq. \eqref{individual:obj:reformulate} over all orders minus the revenue of the transmission system operator will be equal to
\begin{equation}\label{individual:obj:reformulate:sum}
\allowdisplaybreaks
    \begin{aligned}
         \Phi_{aggre}(x) =\sum\limits_{a, t, i}{c_a^t (i)x_a^t(i)} &+  \sum\limits_{a, t, i}{\lambda_a^{t,*}\left(q_a^t(i)x_a^{t,*} (i) - q_a^t(i)x_a^{t} (i)\right)}
    +\rho\sum\limits_{a, t, i}{\left|q_a^t(i)x_a^t(i)-q_a^t(i)x_a^{t,*}(i)\right|}.  \\
    \end{aligned}
\end{equation}
\paragraph{Step 3: The Triangle Inequality of the Centralized ALD}
Note that given the optimal decision variables, the residuals $r_a^{t,*}$ vanish, which implies $p_a^{t,*} = \sum_i q_a^t(i)x_a^{t,*} (i)$. Incorporating this zero constant into \eqref{central:L:def},
\begin{equation}\label{central:obj:add}
\begin{aligned}
   L_{\rho}(x, p, \lambda^*)= &\sum\limits_{a, t, i}{c_a^t (i)x_a^t(i)} +  \sum\limits_{a, t}{\lambda_a^{t,*}\left(r_a^{t}- r_a^{t,*}\right)}
    +\rho \sum\limits_{a, t}{\left|r_a^{t,*}- r_a^{t,*}\right|}. 
\end{aligned}
\end{equation}
Applying the triangle inequality to the RHS of \eqref{central:obj:add}:
\begin{equation}\label{central:obj:leq}
\begin{aligned}
  L_{\rho}(x, p, \lambda^*) \leq &\sum_{a, t, i} c_a^t (i)x_a^t(i) + \sum_{a, t} \lambda_a^{t,*}(p_a^t - p_a^{t,*})  
    + \sum_{a, t} \lambda_a^{t,*} \sum_{i \in \mathcal{O}_a^t} \left(q_a^t(i)x_a^{t,*} (i) - q_a^t(i)x_a^t(i)\right)  \\
   & + \rho \sum_{a, t} \left( |p_a^t - p_a^{t,*}| + \sum_{i \in \mathcal{O}_a^t} |q_a^t(i)x_a^t(i) - q_a^t(i)x_a^{t,*}(i)| \right). 
\end{aligned}
\end{equation}
The RHS of Eq. \eqref{central:obj:leq} has two additional terms as follows, compared to Eq. \eqref{individual:obj:reformulate:sum}:
\begin{equation}\label{something:left}
    \begin{aligned}
        \sum\limits_{a, t}{\lambda_a^{t,*}(p_a^t- p_a^{t,*})} + \rho \sum\limits_{a, t}{|p_a^t- p_a^{t,*}|}.
    \end{aligned}
\end{equation}
Recall the definition of the linear set $P$ in Section \ref{ALD:to:Euphemia}, $\forall~p_a^t\in P$, it satisfies $\sum_{a,t}{p_a^t} = 0$. By adding the term $\sum_{a, t}{p_a^t}$ to Eq. \eqref{something:left}, and considering the constraint $p_a^t\in P$, the net injection subproblem is formulated as
\begin{equation}\label{net:injection:ALD}
    \begin{aligned}
     \min\limits_{p\in P} \Psi(p) &:= \min\limits_{p\in P} \left( \sum\limits_{a, t}{p_a^t}+
        \sum\limits_{a, t}{\lambda_a^{t,*}(p_a^t- p_a^{t,*})} + \rho \sum\limits_{a, t}{|p_a^t- p_a^{t,*}|}\right).
    \end{aligned}
\end{equation}
 Replace above in Eq. \eqref{central:obj:leq}, we have the following bound:
\begin{equation}\label{inequality:bridge}
     L_{\rho}(x, p,\lambda^*) \leq \Phi_{aggre}(x) + \Psi(p) - \sum_{a,t}{p_a^t}.
\end{equation}
For the optimal dispatch $p^*\in P$, $\Psi(p^*) = 0$, and therefore, the optimal objective function value of \eqref{net:injection:ALD} is always non-positive, and thus\\
\begin{equation}\label{inequality:bridge:final}
\begin{aligned}
    z^{MIP} =  \min_{x\in X, p\in P}{\mathcal{L}_{\rho}(x, p,\lambda^*)} &\leq \min_{x\in X}\Phi_{aggre}(x) + \min_{p\in P}\Psi(p)- \min_{p\in P}{\sum_{a,t}{p_a^t}}\\
    &\leq \min_{x\in X}{\Phi_{aggre}(x)}.
    \end{aligned}
\end{equation}
\paragraph{Step 4: Contradiction and Optimality}
Assume an order $i$ can improve its welfare by deviating to $x_a(i) \neq x_a^*(i)$. This implies $f_{a,i}(x_a(i)) < f_{a,i}(x_a^*(i))$, and consequently $\min_{x\in X}\Phi_{aggre}(x) < \Phi_{aggre}(x^*) = z^{MIP}$. However, this contradicts the lower bound $z^{MIP}$ established in \eqref{inequality:bridge:final}. Therefore, $\{x_a^{t,*}(i)\}_{t\in \mathcal{T}}$ is the optimal solution for each decentralized problem, thus the ALD mechanism is incentive-compatible.
\end{proof}
Following the proof of Theorem \ref{incentive:compatibility:ALD}, the following proposition characterizes the revenue for transmission system operators.
\begin{proposition}[Aggregate  Congestion Rent]\label{revenue:tso}
The summation of the final term in \eqref{individual:obj:reformulate} over all orders, given by $-\sum_{a, t, i} \lambda_a^{t,*} q_a^t(i) x_a^{t,*}(i)$, equals the total congestion revenue allocated to the Transmission System Operator. 
\end{proposition}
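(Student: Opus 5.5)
We verify the identity by rewriting the order-level sum nodally, using the balance constraint at the market-clearing solution, and then the network flow relations. Write $p_a^{t,*}$ for the optimal net injection. Group the terms by area and period:
\begin{equation*}
-\sum_{a,t,i}\lambda_a^{t,*}q_a^t(i)x_a^{t,*}(i) = -\sum_{a,t}\lambda_a^{t,*}\sum_{i\in\mathcal{O}_a^t}q_a^t(i)x_a^{t,*}(i) = -\sum_{a,t}\lambda_a^{t,*}p_a^{t,*},
\end{equation*}
where the last step uses the coupling constraint \eqref{eq:global_balance}, which holds with zero residual at $(x^*,p^*)$. The sign convention follows \eqref{central:obj}: $q>0$ for supply, $q<0$ for demand, so $p_a^{t,*}>0$ means area $a$ is a net exporter. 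The quantity $-\sum_a\lambda_a^{t,*}p_a^{t,*}$ is therefore the buyers' payments minus the sellers' receipts in period $t$, i.e.\ the merchandising surplus the market operator collects and passes to the TSO.

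Next, decompose the net injection into bilateral exchanges using \eqref{power:balance_flow}, $p_a^{t,*}=\sum_{a'}(e_{aa'}^{t,*}-e_{a'a}^{t,*})$. Substituting and swapping the dummy indices in the second sum gives
\begin{equation*}
-\sum_{a}\lambda_a^{t,*}p_a^{t,*} = \sum_{a,a'} e_{aa'}^{t,*}\bigl(\lambda_{a'}^{t,*}-\lambda_a^{t,*}\bigr) = \sum_{a,a'} e_{aa'}^{t,*}\,FTR_{aa'}^t,
\end{equation*}
with $FTR_{aa'}^t=LMP_{a'}^t-LMP_a^t$ as in Def.~\ref{def:ald:pricing}, after identifying $\lambda^{t,*}$ with $LMP^t$ as stated at the start of Section~\ref{individual:rationality:property}. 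Summing over $t$ gives the total congestion rent on exchanges. We also note that a uniform shift $\eta$ of all prices changes the expression by $-\eta\sum_a p_a^{t,*}=0$, since $\sum_a p_a^{t,*}=0$. Hence the rent depends only on price differences, and the particular choice in \eqref{LMP:ALD:Pricing} does not affect the identity.

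The main obstacle is the final identification with $\sum_{l,t}f_{lt}^*FTR_l^t$ over physical lines, i.e.\ passing from bilateral exchanges $e$ to line flows $f=He$. For a general meshed network with PTDFs this is not an identity unless the prices have the nodal-LMP structure $\lambda_{a'}-\lambda_a=\sum_l \mu_l H^{l}_{aa'}$, with $\mu_l$ the flow-constraint duals. The ALD multipliers only dualize \eqref{eq:global_balance} and are not guaranteed to have this structure. We therefore restrict the line-level statement to the radial (tree) case, as the roadmap indicates for Proposition~\ref{TSO:revenue:adequate}. On a radial network, each exchange between adjacent areas corresponds to exactly one line, with $f_{lt}^*=e_{aa'}^{t,*}-e_{a'a}^{t,*}$ for $l=(a,a')$. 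Regrouping the pairwise sum by line then yields $\sum_{l,t}f_{lt}^*FTR_l^t$, which matches the TSO welfare expression stated after \eqref{surplus:formula}. For general networks, the exchange-level form $\sum_{a,a',t}e_{aa'}^{t,*}FTR_{aa'}^t$ is what the proposition asserts.
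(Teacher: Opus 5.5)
Your first step, grouping by area and period and using \eqref{eq:global_balance} at $(x^*,p^*)$ to reach $-\sum_t\lambda^{t,*\top}p^{t,*}$, is exactly the paper's. Your exchange-level identity $\sum_{a,a'}e^{t,*}_{aa'}(\lambda^{t,*}_{a'}-\lambda^{t,*}_a)$ is also correct algebra. The gap is in what you call the main obstacle: it does not exist. Your claim that the line-level identity fails on meshed networks unless $\lambda$ has nodal-LMP structure is false.

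The paper finishes with flow conservation $p^{t,*}=B^\top f_t^*$:
$-\lambda^{t,*\top}p^{t,*}=(B\lambda^{t,*})^\top(-f_t^*)=\sum_{l}f_{lt}^*(\lambda^{t,*}_{\mathrm{to}(l)}-\lambda^{t,*}_{\mathrm{from}(l)})$.
This is summation by parts on the network graph. It holds for every price vector and every topology. The condition $\lambda_{a'}-\lambda_a=\sum_l\mu_lH^l_{aa'}$ that you cite is needed for a different identity, namely writing the rent as $\sum_l\mu_l f_{lt}$ through the duals of the flow limits. It plays no role here.

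The same fact also closes your own exchange route. PTDFs are flow-conserving: the vector $h^t_{aa'}:=(H^{lt}_{aa'})_{l\in\mathcal{L}}$ satisfies $B^\top h^t_{aa'}=u_a-u_{a'}$, where $u_a$ is the $a$-th unit vector. This is precisely why $p=B^\top f$ holds in the model. Hence $\sum_l H^{lt}_{aa'}(\lambda_{\mathrm{to}(l)}-\lambda_{\mathrm{from}(l)})=\lambda_{a'}-\lambda_a$ for every $\lambda$. Combined with $f_{lt}=\sum_{a,a'}H^{lt}_{aa'}e^t_{aa'}$ from \eqref{power:flow:limits}, this gives $\sum_{a,a'}e^{t,*}_{aa'}FTR^t_{aa'}=\sum_l f^*_{lt}FTR^t_l$ on any network.

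As a result, your restriction to radial networks is unnecessary, and the radial step itself is not justified. \eqref{power:balance_flow} allows exchanges $e_{aa'}$ between non-adjacent areas, and these load every line on the connecting path. So $f^*_{lt}=e^{t,*}_{aa'}-e^{t,*}_{a'a}$ is false in general; the telescoping argument above is what actually handles such exchanges.

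You therefore end up proving only an exchange-level statement for general grids and reinterpreting the proposition to match. The line-level statement the paper proves, matching the TSO welfare $\sum_{l,t}f^*_{lt}FTR^t_l$, holds verbatim. The paper's numerical welfare check on the meshed Nordic grid is consistent with this.

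A smaller point: $\eta_a^t$ in \eqref{LMP:ALD:Pricing} is area-dependent, so your uniform-shift remark does not cover the actual pricing rule. No such remark is needed, though, since the identity holds for arbitrary $\lambda$.
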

\begin{proof}
First, recall the optimal net injection $p_a^{t,*} = \sum_{i \in \mathcal{O}_a^t} q_a^t(i) x_a^{t,*}(i)$ (e.g., \ref{network:injection:equation}). The total revenue term can be expressed in a vector form as
\begin{equation}\label{total:tso:revenue_revised}
-\sum_{a,t,i} \lambda_a^{t,*} q_a^t(i) x_a^{t,*}(i) = -\sum_{t \in \mathcal{T}} \sum_{a \in \mathcal{A}} \lambda_a^{t,*} p_a^{t,*} = -\sum_{t \in \mathcal{T}} \lambda^{t,* \top} p^{t,*}.
\end{equation}
Following the nodal power balance $p_t^* = B^\top f_t^*$ \citep{wood2013power}, where $B \in \mathbb{R}^{|L| \times |\mathcal{A}|}$ is the line-node incidence matrix and $f_t^*$ is the optimal flow vector, we obtain
\begin{equation}
-\sum_{t\in \mathcal{T}} \lambda^{t,*\top} (B^\top f_t^*) = \sum_{t\in \mathcal{T}} (B\lambda^{t,*})^\top (-f_t^*).
\end{equation}
The term $B\lambda^{t,*}$ yields a vector of dual price differentials across each interconnection (e.g., $\lambda_{a'}^{t,*} - \lambda_{a''}^{t,*}$ for a line from $a'$ to $a''$). Therefore, $(B\lambda^{t,*})^\top (-f_t^*)$ is the aggregate congestion rent during $[t-1, t]$. This concludes the proof.
\end{proof}
From Proposition \ref{revenue:tso}, we can deduce that the individual optimization objectives defined in Def.~\ref{individual:optimization} effectively internalize transmission rents through the ALD congestion prices established in Def.~\ref{def:ald:pricing}.
\begin{proposition}[Revenue Adequacy for TSO of Radial Networks]\label{TSO:revenue:adequate}
The aggregate daily congestion rent of a radial network under the ALD pricing mechanism is positive \footnote{Note that the revenue adequacy property may not hold for the TSO of a general network structure.}.
\end{proposition}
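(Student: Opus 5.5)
Starting from Proposition \ref{revenue:tso}, the aggregate congestion rent is $\sum_{t\in\mathcal{T}}\sum_{l\in\mathcal{L}} f_{lt}^*\,FTR_l^t$ with $FTR_l^t = LMP_{a'}^t - LMP_a^t$ for a line $l$ oriented from $a$ to $a'$. The plan is to show that every single term $f_{lt}^*\,(LMP_{a'}^t-LMP_a^t)$ is nonnegative, and strictly positive on congested lines; summing over lines and periods then gives the claim. Radiality is used as follows. In a radial (tree) network the PTDF representation \eqref{power:flow:limits} degenerates: the flow on each line equals the net injection of the subtree it separates, so flows are uniquely determined by injections. There are no loop flows, and hence no line can carry power from a high-price area to a low-price area because of a loop. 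This is exactly the mechanism that can break revenue adequacy in meshed networks, consistent with the footnote.

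\textbf{Step 1: price separation happens only across binding lines.} Consider the net-injection subproblem $\min_{p\in P}\Psi(p)$ from \eqref{net:injection:ALD}, which $p^*$ solves. By optimality of $p^*$, perturb it by shifting an amount $\epsilon$ of power along a single tree line $l$ from $a$ to $a'$. On a tree this changes only $p_a^t$ and $p_{a'}^t$, by $\mp\epsilon$, and the flow $f_{lt}$ by $\epsilon$; I would argue that for small $\epsilon$ such a move respects the ramping constraints whenever the line limit itself is slack. The first-order change of $\Psi$ is then $\epsilon(\lambda_{a'}^{t,*}-\lambda_a^{t,*}) + 2\rho|\epsilon|$. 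Combining these one-sided conditions with the choice of $LMP$ in \eqref{LMP:ALD:Pricing}, where the minimal optimal multipliers are taken and the congestion adjustment is bounded by $\eta_a^t\le\rho$, yields the following. If $F_{lt}^{\min}<f_{lt}^*<F_{lt}^{\max}$, then $LMP_{a'}^t=LMP_a^t$. If $f_{lt}^*=F_{lt}^{\max}>0$, only the direction $\epsilon<0$ is feasible, which forces $LMP_{a'}^t\ge LMP_a^t$. Symmetrically, $f_{lt}^*=F_{lt}^{\min}<0$ forces $LMP_{a'}^t\le LMP_a^t$.

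\textbf{Step 2: sign and summation.} In each of the three cases $f_{lt}^*\,(LMP_{a'}^t-LMP_a^t)\ge 0$. Summing over $l$ and $t$ gives a nonnegative daily rent, and positivity whenever some line is congested with a genuine price spread.

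\textbf{Main obstacle.} The hard part is Step 1. The subproblem contains the $\rho$-penalty and the intertemporal ramping constraints \eqref{ramping:p:hour}--\eqref{ramping:f:Line:set}, so the single-line, single-period perturbation may be infeasible, and the KKT reasoning only pins prices down up to the $\pm\rho$ slack. I would first try to use the $\eta_a^t$ selection rule in Definition \ref{def:ald:pricing} to choose the congestion component so that the price difference equals the multiplier of the binding line limit. If ramping constraints bind, I would treat their multipliers as additional nonnegative rent accruing to the TSO, and I may need to state that as an assumption.
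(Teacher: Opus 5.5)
Your strategy is the paper's: show $f^*_{lt}(LMP_{a'}^t-LMP_a^t)\ge 0$ on each tree line and sum. The paper's proof simply asserts that on a radial network energy flows from low- to high-price areas ``as in convex LMP''. Everything therefore rests on your Step~1, and it fails for a structural reason that your own computation already exposes.

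With $x=x^*$ fixed, $p^*$ does minimize $\Psi$ over $P$. But a shift of $\epsilon$ along line $l$ changes $\Psi$ by $\pm\epsilon(\lambda_{a'}^{t,*}-\lambda_a^{t,*})+2\rho|\epsilon|$. On a slack line this yields only $|\lambda_{a'}^{t,*}-\lambda_a^{t,*}|\le 2\rho$. On a line at $F_{lt}^{\max}>0$ it yields only a one-sided bound with the same slack: orienting so that $f_{lt}$ and $p_a^t$ increase together, you get $\lambda_{a'}^{t,*}-\lambda_a^{t,*}\ge -2\rho$, not the sign you need. The $L_1$ term absorbs price differences up to $2\rho$ and decouples the ALD multipliers from the line-limit multipliers. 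That is why $\Lambda$ is a fat set and why the convex-LMP analogy breaks. The selection rule of Definition~\ref{def:ald:pricing} does not repair this. A componentwise minimum over $\Lambda$ says nothing about price differences, and $\eta_a^t$ is only required to satisfy $\eta_a^t\le\rho$ and keep the price in $\Lambda$.

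In fact the statement cannot be derived from ALD optimality and Definition~\ref{def:ald:pricing} alone. Take Example~1 of the supplement: two areas joined by one line, hence radial, with $\rho=9$, $f^*=70$ strictly inside $[-100,100]$, and $z^{MIP}=-730$. Check the eight configurations of $(y_3,x_1,x_2)$ and minimize over $(y_1,y_2,f)$ in each. This gives $\min_{x\in X,p\in P}L_9(x,p,\lambda)=-730$ at $\lambda=(14.5,14)$, so this point lies in $\Lambda$, consistent with the paper's ranges $\lambda_1\in[14,14.5]$, $\lambda_2\in[14,20]$. Definition~\ref{def:ald:pricing} admits it as the componentwise minimum $(14,14)$ plus $\eta=(0.5,0)$. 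The congestion rent is then $70\cdot(14-14.5)=-35<0$. With $\eta=0$ the rent is $0$, so strict positivity fails as well.

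What is missing is therefore not a sharper KKT argument or a ramping caveat. It is an explicit hypothesis on the choice of $\eta$, for instance that $LMP_{a'}^t-LMP_a^t$ vanishes on uncongested lines and has the sign of $f^*_{lt}$ on congested ones. Under such a hypothesis your termwise argument is immediate and gives nonnegativity, not positivity, of the rent. Your closing remark that an assumption may be needed is on target, but it is needed already on slack lines, not only where ramping constraints bind.
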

\begin{proof}
The proof of Proposition \ref{TSO:revenue:adequate} follows from that the energy always flows from a low-price area to a high-price area (i.e., $\lambda_{a'}^{t,*} < \lambda_{a''}^{t,*}$) for a radial network structure, which is the same as the convex locational marginal pricing.
\end{proof}
\subsubsection{Individual revenue adequacy}\label{individual:revenue:adequacy}
In this section, we examine whether the ALD pricing mechanism inherently guarantees individual revenue adequacy—a property often lacking in existing pricing models such as the IP pricing mechanism. While some mechanisms, such as the completely positive programming (CPP) based approach \citep{guo2025copositive} and the primal-dual pricing framework \citep{rui2012}, enforce revenue adequacy via explicit constraints (i.e., surplus $\geq 0$), such a ``forced'' formulation can inadvertently compromise incentive compatibility \citep{guo2025copositive}. In contrast, the ALD mechanism ensures non-negative surplus for supply orders as an intrinsic structural outcome. Although this guarantee does not universally extend to the demand orders (see Corollary~\ref{coro:consumer}). The following theorem establishes this property for all supply orders.
\begin{theorem}[Non-negativity of Supply-Order Surplus]\label{individual:revenue:adequacy:supply:order}
Assuming zero deviation for a supply order $i$ (i.e., $x_a^t(i) = x_a^{t,*}(i),\forall ~t\in \mathcal{T}$), its daily surplus is non-negative under the ALD pricing mechanism defined in Def. \ref{def:ald:pricing}.
\end{theorem}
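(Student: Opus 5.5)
Under zero deviation, the deviation-penalty term in the surplus formula \eqref{surplus:formula} vanishes. The daily surplus of the supply order $i$ then reduces to
\begin{equation*}
S_i=\sum_{t\in\mathcal{T}}\bigl(LMP_a^t+\rho-p^t(i)\bigr)\,q^t(i)\,x^{t,*}(i).
\end{equation*}
The plan is to show that this quantity is non-negative. The tool will be the incentive compatibility established in Theorem \ref{incentive:compatibility:ALD}, applied against a suitable alternative decision.

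\textbf{Step 1: compare with rejection.} By Theorem \ref{incentive:compatibility:ALD}, $x_a^*(i)$ solves the maximization form of the decentralized problem \eqref{individual:obj}. The surplus at the optimum therefore dominates the value of any feasible alternative in $X_i$. The natural alternative is rejecting the order, $x_a(i)=0$, which is feasible for elementary, profile, regular and flexible hourly orders alike. For linked children or exclusive groups, setting the order to $0$ keeps feasibility, because \eqref{block:order:constr:2}--\eqref{block:order:constr:3} are downward closed in the child/group variables. Evaluating \eqref{individual:obj} at $x_a(i)=0$ gives
\begin{equation*}
-\rho\sum_t |q^t(i)x^{t,*}(i)|+\rho\sum_t q^t(i)x^{t,*}(i).
\end{equation*}
For a supply order we have $q^t(i)\ge 0$ and $x^{t,*}(i)\ge 0$, so this expression is exactly zero. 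Hence $S_i\ge 0$.

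\textbf{Step 2: direct check as a backup.} I would also give a direct argument that does not lean on incentive compatibility. Since $c_i^t\ge 0$ and $p^t(i)q^t(i)=c^t(i)$, it suffices to show that $LMP_a^t+\rho\ge p^t(i)$ whenever $x^{t,*}(i)>0$. One route is the subgradient optimality of $L_\rho(\cdot,\cdot,\lambda^*)$ at $x^*$ in the coordinate $x^t(i)$: decreasing $x^t(i)$ changes $L_\rho$ by at most $(c^t(i)-\lambda_a^{t,*}q^t(i)-\rho q^t(i))\,\delta$ in the linear part. This only works when decreasing is feasible, which it is not for regular blocks, so I would treat this route as secondary and lean on Step 1.

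\textbf{Main obstacle.} The delicate point is that $LMP_a^t$ in Def.~\ref{def:ald:pricing} contains the shift $\eta_a^t$ and a minimal selection over $\Lambda$. Step 1 needs the prices used in \eqref{individual:obj} to be exactly those for which Theorem \ref{incentive:compatibility:ALD} holds. I would argue this from the constraint $\lambda_a^{t,*}+\eta_a^t\in\Lambda$: the resulting price vector is itself an optimal dual vector supporting the exact penalty representation with $\rho$ (Theorem \ref{exists:threshold}). The proof of Theorem \ref{incentive:compatibility:ALD} therefore applies verbatim to the LMP vector, and the comparison in Step 1 goes through. A secondary check is feasibility of $x_a(i)=0$ under the load-gradient constraints \eqref{load:gradient}. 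If the order is coupled over time, I would instead compare against rejecting it in all periods simultaneously, which the gradient bounds do not forbid when the ramp from zero is within $60G_s$. Failing that, I would restrict the comparison to the feasible componentwise scaling $\theta x^*(i)$, $\theta\in[0,1)$, which yields the same sign conclusion by linearity.
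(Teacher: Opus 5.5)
Your Step 1 is exactly the paper's proof: evaluate the decentralized objective \eqref{individual:obj} at $x_a^*(i)$, where it equals $-S_{a,i}$, and at the null dispatch $0$, where the non-convex reward and the deviation penalty cancel because $q_a^t(i)\ge 0$; then conclude $S_{a,i}\ge 0$ from the optimality of $x_a^*(i)$ supplied by Theorem~\ref{incentive:compatibility:ALD}. Your additional checks (that $0\in X_i$, that the $\eta$-shifted LMP vector still lies in $\Lambda$, and the $\theta x_a^*(i)$ scaling fallback) are sound refinements the paper leaves implicit; the all-zero profile trivially satisfies \eqref{load:gradient}, so neither the ramp caveat nor Step 2 is needed.
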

\begin{proof}
Let $\pi_a^t = LMP_a^t + \rho$ denote the effective settlement price under the ALD pricing mechanism. For a supply order $i$ in bidding area $a$, the daily surplus is defined as
\begin{equation}\label{supply:surplus:def}S_{a,i} = \sum_{t\in\mathcal{T}} \left( \pi_a^t q_a^t(i) - c_a^t(i) \right) x_a^{t}(i),
\end{equation}
where $c_a^t(i) = p_a^t(i) q_a^t(i)$. 
Recall the individual optimization problem with objective function $f_{a,i}$ in \eqref{individual:obj}. For the supply order $i$, we have $q_a^t(i) > 0$. By comparing the optimal dispatch $x_a^{*}(i)$ with the null dispatch $0 \in X_i$, we observe:
\begin{enumerate}
\item $f_{a,i}(0) = 0$: In that case, the nonconvex reward of the supply order $i$ is precisely settled due to the deviation. 
\item $f_{a,i}(x_a^*(i)) = -S_{a,i}$: When evaluated at the optimal solution $x_a^{*}(i)$ (with zero deviation), the minimization objective in \eqref{individual:obj} reduces to the additive inverse of the surplus.
\end{enumerate}
Since $x_a^{*}(i)$ is the optimal solution of \eqref{individual:obj}, (see the proof of Theorem \ref{incentive:compatibility:ALD}), it follows that $f_{a,i}(x_a^*(i)) \leq f_{a,i}(0)$. Consequently, $-S_{a,i} \leq 0$, which implies $S_{a,i} \geq 0$. Thus, every supply order is guaranteed to be revenue adequate.
\end{proof}
Theorem \ref{individual:revenue:adequacy:supply:order} considers the surplus of supply orders, which is critical for each pricing mechanism. In the following corollary, the surplus of demand orders will also be investigated.
\begin{corollary}[Demand-Side PAO-Free Condition]\label{coro:consumer}Consider a demand order $i$ ($q_a^t(i) < 0$), the ALD pricing mechanism does not unconditionally guarantee the revenue adequacy (i.e., non-negative surplus). However, revenue adequacy holds for demand order $i$ if its bidding price $p_a^t(i)$ exceeds the offer price of any supply orders. 
\par
(A weaker yet sufficient condition for revenue adequacy of demand orders is that the bid price $p_a^t(i)$ remains higher than the offer price of any cleared supply order $i$ (i.e., $x_a^{t,*}(i) > 0$) for all $t \in \mathcal{T}$. )
\end{corollary}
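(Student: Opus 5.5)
The plan is to reduce revenue adequacy of a demand order to a pointwise comparison between its bid price and the effective settlement price $\pi_a^t = LMP_a^t + \rho$ from the proof of Theorem \ref{individual:revenue:adequacy:supply:order}. First, I would dispose of the negative part of the statement. Repeat the argument of Theorem \ref{individual:revenue:adequacy:supply:order} with $q_a^t(i)<0$ and compare $f_{a,i}(x_a^*(i))$ with $f_{a,i}(0)$. For a demand order the null dispatch gives
\[
f_{a,i}(0)=\sum_t\bigl(\rho|q_a^t(i)|x_a^{t,*}(i)-\rho q_a^t(i)x_a^{t,*}(i)\bigr)=2\rho\sum_t|q_a^t(i)|x_a^{t,*}(i)\ge 0 .
\]
Hence incentive compatibility (Theorem \ref{incentive:compatibility:ALD}) only yields $-S_{a,i}\le 2\rho\sum_t|q_a^t(i)|x_a^{t,*}(i)$, not $S_{a,i}\ge 0$. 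A small two-area example, for instance one taken from the illustrative examples in the supplement, would then exhibit an actual negative demand surplus, showing the guarantee is not unconditional.

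For the positive part I would compute the zero-deviation surplus from \eqref{surplus:formula} with $q_a^t(i)<0$:
\[
S_{a,i}=\sum_t |q_a^t(i)|\,x_a^{t,*}(i)\,\bigl(p_a^t(i)-LMP_a^t-\rho\bigr)=\sum_t |q_a^t(i)|\,x_a^{t,*}(i)\,\bigl(p_a^t(i)-\pi_a^t\bigr).
\]
Since every factor in front is nonnegative, it suffices to prove $\pi_a^t\le \bar p^t$ for every $t$ with $x_a^{t,*}(i)>0$. Here $\bar p^t$ is the largest offer price among supply orders, or among cleared supply orders for the weaker condition. The assumption $p_a^t(i)\ge\bar p^t$ then makes every summand nonnegative.

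The key step, and the main obstacle, is this upper bound on the effective price. I would argue by contradiction, using the minimal selection in Definition \ref{def:ald:pricing}. Suppose $\pi_a^t>\bar p^t$ for some $(a,t)$. Then in the augmented Lagrangian \eqref{central:obj}, the coefficient of every supply variable at $(a,t)$, namely $c_a^t(i)-\lambda_a^{t,*}q_a^t(i)$ plus the $\rho$-term, favours full acceptance. I would show that decreasing $\lambda_a^t$ by a small amount $\varepsilon$ does not change the set of minimizers of $L_\rho(\cdot,\cdot,\lambda)$. The tool is the supergradient description of $z_\rho^{ALD}$ from Lemma \ref{proper:dual:optimization:ALD}: at a minimizer the residual $r_a^t$ vanishes by strong duality (Theorem \ref{exists:threshold}). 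Together with the piecewise-linear concavity of $z_\rho^{ALD}$, this shows that $\lambda^*-\varepsilon e_{a,t}$ is still dual optimal, i.e., it lies in $\Lambda$. That contradicts the choice of $LMP_a^t$ as the minimal optimal multiplier plus $\eta_a^t\le\rho$. The delicate points are twofold: the $\rho|\cdot|$ kink means the marginal supply price bounds $\lambda$ only up to $\pm\rho$, and the contribution of $\eta_a^t$ must be controlled. If the bound comes out as $LMP_a^t\le\bar p^t-\rho$ only for cleared marginal supply, that naturally yields the weaker cleared-supply version of the condition.

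Finally, I would sum over $t\in\mathcal{T}$ to obtain $S_{a,i}\ge0$, which is the statement.
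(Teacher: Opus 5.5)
\textbf{The key step is false, and so is the statement as written.} Your route is the paper's. You write the zero-deviation demand surplus as $\sum_t |q_a^t(i)|\,x_a^{t,*}(i)\,(p_a^t(i)-\pi_a^t)$ and then use the minimal selection in Definition~\ref{def:ald:pricing} to show that $\pi_a^t=LMP_a^t+\rho$ cannot exceed the (cleared) supply offers. The paper only asserts this step (``recall the choice of the dual optimal variable \ldots\ we will have $p_a^t(i)>LMP_a^t+\rho$''). Your attempt to justify it breaks at the claim that ``at a minimizer the residual vanishes.'' That holds only when $\rho$ is strictly above the exact-penalty threshold of that particular $\lambda$. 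At a point of $\Lambda$ where $\lambda_a^t$ cannot be lowered further, some minimizer of $L_\rho(\cdot,\cdot,\lambda)$ has nonzero residual, and that minimizer is what blocks the decrease. Such blocking points are combinatorial deviations, such as swapping one block for another, not single-order deviations. In one area, with $g(x)=\sum_i q_i x_i$ the net supply, one gets $\min\Lambda=\sup\{(z^{MIP}-c^\top x)/|g(x)| : x\in X,\ g(x)<0\}-\rho$. This has nothing to do with the largest supply offer, so $\pi_a^t>\bar p^t$ is fully compatible with dual optimality.

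\textbf{Counterexample.} Take one area and one period. Let $y\in\{0,1\}$ be a demand regular block of 100 MWh bidding 30. Let $x_A,x_B\in\{0,1\}$ be supply regular blocks of 100 and 99 MWh offering 20 and 5.
\begin{itemize}
\item The only nontrivial feasible point is $(x_A,x_B,y)=(1,0,1)$, so $z^{MIP}=-1000$.
\item The point $(0,1,1)\in X$ has cost $-2505$ and residual $1$, so $L_\rho=-2505+\lambda+\rho$.
\item Zero gap therefore forces $\lambda+\rho\ge 1505$. In fact $\Lambda=[1505-\rho,\,5+\rho]$, with $\rho^*=750$ and $\Lambda=\{755\}$ at $\rho=\rho^*$.
\item Definition~\ref{def:ald:pricing} gives $LMP=\lambda+\eta\in\Lambda$, hence $\pi\ge 1505$.
\item The demand surplus is $100(30-\pi)\le -147500$.
\end{itemize}
Yet 30 exceeds every supply offer, cleared or not, and incentive compatibility still holds. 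So the upper bound you need fails, and both sufficient conditions in the corollary fail as stated. The paper's proof contains the same unjustified step.

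Your treatment of the first (negative) claim is fine and matches the paper's. A correct positive result would need a hypothesis that controls $\min\Lambda$ directly, i.e.\ bounds the welfare gain per unit of shortage over all of $X$. A bound on offer prices alone is not enough.
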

\begin{proof}
For the first claim, an illustrative example is provided (see Example 1 of the Supplement), and we focus on the second claim. By replacing the objective function in \eqref{individual:obj} by $f_{a,i}^t(x_a^{t}(i))$, the optimization problem becomes\begin{equation}\label{individual:obj:sum:formulation}
\begin{aligned}
 \min&\sum_{t\in\mathcal{T}}{f_{a,i}^t(x_a^{t}(i)) },\\
  \text{s.t.}&~ x_a^{t}(i) \in X_i, 
    ~\forall~a\in \mathcal{A},t\in \mathcal{T},\lambda_a^{t,*} \text{by Def. }  \ref{def:ald:pricing}, \rho \in [\rho^*,\overline{\rho}].
\end{aligned}
\end{equation}
The same argument as the proof of Theorem \ref{incentive:compatibility:ALD} can be used to prove that $\forall~t\in\mathcal{T},x_a^{t,*}(i)$ is the optimal solution of the single-period problem:
\begin{equation}\label{individual:obj:each:time:slot}
\begin{aligned}
 \min&~{f_{a,i}^t(x_a^{t}(i)) }\\
  \text{s.t.}&~ x_a^t(i) \in X_i, 
    ~\forall~a\in \mathcal{A},\forall~t\in \mathcal{T},\lambda_a^{t,*} \text{by Def. }  \ref{def:ald:pricing}, \rho \in [\rho^*,\overline{\rho}]
\end{aligned}
\end{equation}
    Therefore, $\forall ~t\in \mathcal{T}$, if $x_{a}^{t,*}(i)>0$, then the effective clearing price inequality $LMP_a^t + \rho \geq p_a^t(i), $ holds for a supply order $i$ by Theorem \ref{individual:revenue:adequacy:supply:order}. This implies that during each time period $(t-1,t)$, the bidding prices of those cleared supply orders are covered by $LMP_a^t + \rho$. 
    Recall the choice of the dual optimal variable in Eq.~\eqref{LMP:ALD:Pricing}, for each demand order $i$, we will have $p_a^t(i)> LMP_a^t + \rho, \forall ~t\in \mathcal{T}$. The daily surplus of demand order $i$ calculated by
    \begin{equation}\label{demand:surplus:calculation}
    S_{a,i} = \sum_{t \in \mathcal{T}} |q_a^t(i)| \left( p_a^t(i) - (LMP_a^t + \rho) \right) x_a^{t,*}(i),
    \end{equation}
     is thus non-negative. Therefore, the bidding price condition is a sufficient condition for the revenue adequacy of demand orders under the ALD pricing mechanism.
\end{proof}
From Corollary \ref{coro:consumer}, it is clear that, unlike the LP model, the MILP formulation exhibits unique properties. In the LP model, all bidding orders can participate in the market, and the market-clearing price $\text{LMP}_a^t$ can ensure that there are no paradoxically accepted orders. In the MILP model, because of the power balance equation, the market operator will first address the market-clearing issue. An expensive supply order may sell electricity to a demand order with a low bidding price, which makes the demand order's surplus negative, but that negative surplus can be absorbed by another demand order that has a high bidding price, which makes the overall welfare larger than without letting the expensive supply order produce electricity. Hence, the overall welfare after the selection by the condition of Corollary \ref{coro:consumer} can be less than without the selection. If the market operator wants to put all bidding orders into the market without selection, we propose eliminating paradoxically accepted orders (PAOs) from demand orders as follows: this would put additional money into the market.
\par
In the individual optimization problem \eqref{individual:obj}, the term $\rho \sum_{t} q_a^t(i)x_a^{t,*}(i)$ serves as a non-convex reward for the supply order ($q > 0$); however, it imposes a non-convex cost on the demand order ($q < 0$). To eliminate negative surplus among demand orders, we propose a remedy approach based on targeted compensation.
Let $\mathcal{O}_{nwb}$ denote the set of ``not-well-behaved'' demand orders whose daily surplus is negative under the standard ALD price mechanism. By injecting additional revenue: a transfer payment of $\Omega_i = -2\rho \sum_{t} q_a^t(i)x_a^{t,*}(i)$ for each $i \in \mathcal{O}_{nwb}$, the final term in the reformulated individual objective of \eqref{individual:obj} becomes $-\rho \sum_{t} |q_a^t(i)x_a^{t,*}(i)|$. Consequently, for any demand order $i \in \mathcal{O}_{nwb}$, the optimization yields $f_{a,i}(x_a^*(i)) \leq f_{a,i}(0) = 0$, ensuring the daily surplus non-negative. This remedy approach, as shown by an illustrative example (Example 1 of the Supplement), can effectively restore revenue adequacy across all demand orders.
\par
We now proceed with paradoxically rejected orders (PROs), introduced by the following theorem.
\begin{theorem}[PRO-Free Property of ALD]\label{no:pro}Under the ALD pricing mechanism defined in Def. \ref{def:ald:pricing}, the market-clearing solution for model \eqref{MILP:original:one} is free of paradoxically rejected orders (PROs).
\end{theorem}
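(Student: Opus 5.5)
The plan is to read "paradoxically rejected order" in the standard European-market sense. An order $i$ is a PRO if, under the published prices, it would earn a strictly positive surplus from being executed, yet the market-clearing solution rejects it, i.e. $x_a^{t,*}(i)=0$ for all $t$ (or, for profile blocks, accepts it at a ratio below what would be profitable). Under the ALD mechanism, the prices an order faces are the piecewise-linear ALD prices of Def.~\ref{def:ald:pricing}, not bare marginal prices. So I would formalize "profitable under the prices" as: there exists $x_a(i)\in X_i$ with surplus $S_i$ in \eqref{surplus:formula} strictly larger than the surplus at $x_a^*(i)$. Equivalently, there is $x_a(i)$ with $f_{a,i}(x_a(i))<f_{a,i}(x_a^*(i))$, where $f_{a,i}$ is the objective in \eqref{individual:obj}. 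With this reading, the theorem reduces to the statement that $x_a^*(i)$ minimizes $f_{a,i}$ over $X_i$ for every order.

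The key step is to invoke Theorem~\ref{incentive:compatibility:ALD}. Its proof (Steps~1--4) shows, via the strong duality of Theorem~\ref{exists:threshold} and the bound \eqref{inequality:bridge:final}, that no order can strictly decrease $f_{a,i}$ by unilaterally deviating from $x_a^*(i)$. Applying this to a rejected order with $x_a^{*}(i)=0$ immediately rules out a profitable deviation. I would also make explicit what the surplus of such an order looks like. At $x_a^*(i)=0$ the non-convex reward $\rho q_a^t(i)x_a^{t,*}(i)$ vanishes, so accepting at $x_a(i)\neq 0$ gives surplus
\[
\sum_t \bigl(\lambda_a^{t,*}q_a^t(i)-c_a^t(i)\bigr)x_a^t(i)-\rho\sum_t |q_a^t(i)x_a^t(i)|.
\]
For a supply order ($q>0$) this equals $\sum_t (\lambda_a^{t,*}-\rho-p_a^t(i))q_a^t(i)x_a^t(i)$, and IC shows it is $\le 0$. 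This gives an interpretable statement: a rejected supply order's offer price is at least the effective threshold $\lambda_a^{t,*}-\rho$ it faces. The demand case is the analogue with the signs of $q$ flipped.

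Next I would check that the conclusion is per-order and does not rely on aggregation. Step~4 of the IC proof argues by contradiction through $\Phi_{aggre}$: a single-order strict improvement, with all other orders kept at $x^*$, gives $\Phi_{aggre}(x)<z^{MIP}$. This requires $x=(x_a(i),x^*_{-i})\in X$. For orders coupled through linked-block or exclusive-group constraints \eqref{block:order:constr:2}, I would treat the coupled group jointly, as the remark after Def.~\ref{individual:optimization} prescribes. The same contradiction then applies to the group's joint deviation, so none of the group members is paradoxically rejected either.

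The main obstacle I expect is this feasibility and coupling issue. The other is making sure the PRO definition used matches the one the paper intends (profitability under the ALD price rather than the LMP alone). If a reader insists on LMP-only profitability, the claim could fail, because $\rho$ shifts the effective acceptance threshold. I would therefore state the definition explicitly at the start of the proof and then cite Theorem~\ref{incentive:compatibility:ALD}.
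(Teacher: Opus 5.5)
Your proposal is correct and follows the same route as the paper: both read ``paradoxically rejected'' relative to the ALD surplus, including the deviation penalty $\rho|q_a^t(i)\Delta x_a^t(i)|$, and obtain PRO-freeness directly from Theorem~\ref{incentive:compatibility:ALD} applied to a rejected order with $x_a^{*}(i)=0$. Your extra remarks make explicit what the paper's short argument leaves implicit: the unilateral deviation $(x_a(i),x^*_{-i})$ must lie in $X$, so linked-block and exclusive-group orders are treated as groups, and a rejected supply order faces the entry threshold $\lambda_a^{t,*}-\rho$ rather than the settlement price $\lambda_a^{t,*}+\rho$ paid to accepted orders, so the claim depends on this penalty-adjusted reading of ``in-the-money.''
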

\begin{proof}
The proof follows from the optimality of the ALD-based market-clearing. Suppose, for the sake of contradiction, that an order $i$ is paradoxically rejected, i.e., $x_i^* = 0$ despite the order being " in-the-money " based on the market price. By the incentive compatibility property established in Theorem \ref{incentive:compatibility:ALD}, any deviation from the optimal dispatch $x_i^* = 0$ to $x_i > 0$ would result in a lower individual surplus for that order. Specifically, for a demand order, the marginal revenue at the given ALD price would be insufficient to cover its supply cost plus the exact penalty term. Similarly, for a demand order, the utility gained from the purchase would be outweighed by the total payment required, including the penalty. In conclusion, since the ALD mechanism ensures that the optimal dispatch maximizes each order's surplus relative to the penalty-adjusted prices, no order has a positive surplus to be accepted at the clearing price, thus eliminating PROs.
\end{proof}
Note that if the individual optimization problem \eqref{individual:obj} has only the feasible solution $0$, we would not call it a PRO, even if the market-clearing price is higher than its cost or lower than its bidding price; see the second last example of the Supplement. In addition, the last example of the Supplement will display how the ALD pricing mechanism eliminates the PROs and PAOs.
\section{The modified SAVLR method}\label{computational:method:zero:duality}
The ALD pricing framework, based on MILP strong duality, was detailed in Section \ref{ALD:pricing:proposing}. Since MILP is NP-hard, the computational burden of large-scale market clearing increases significantly with problem size. Although commercial solvers utilize advanced methods, such as branch-and-cut to solve the primal problem, computing the exact dual prices remains a clear challenge. Consequently, the primary objective of this section is to present a computationally tractable solution methodology specifically designed for the dual problem to ensure a zero duality gap.
\subsection{The related work}
Recall Lemma~\ref{proper:dual:optimization:ALD}, the dual problem~\eqref{mix:integer:linear:program:ALD} is concave and sub-differentiable. To compute the optimal dual variable, an iteration procedure is needed. However, computing the exact subgradient at each iteration step is computationally expensive, as each step requires solving a relaxed version of the original large-scale MILP. To address this, various decomposition techniques have been developed to partition the problem into smaller, easily handled sub-problems. Consequently and also in the relevant literature, a surrogate subgradient is employed in place of the exact subgradient to alleviate the computational burden.
\par
The surrogate subgradient method for MILP dual optimization was pioneered in \citep{zhao1999surrogate}, with its convergence properties established in \citep{bragin2015convergence}. Subsequent advancements introduced adaptive step sizes and re-initialization techniques \citep{bragin2016efficient} to accelerate convergence. To further enhance numerical accuracy and stability, the Surrogate Absolute-Value Lagrangian Relaxation (SAVLR) was developed \citep{bragin2018scalable} by incorporating an $L_1$-norm penalty. 
\par
Building upon the SAVLR framework, this paper proposes a modified SAVLR method to solve problem \eqref{mix:integer:linear:program:ALD}. Our approach is specifically designed to eliminate the duality gap by determining the optimal multiplier $\lambda^*$ and exact penalty coefficient $\rho \in [\rho^*, \overline{\rho}]$. While this strengthening procedure entails a higher computational cost, it ensures global optimality for a non-convex market clearing. Details will be explained in the following sections.
\subsection{The decomposed primal update}\label{sec:deco:sub:surro:sub}
In the ALD formulation \eqref{mix:integer:linear:program:ALD}, the global constraint \eqref{network:injection:equation} is relaxed but subsequently penalized by the $L_1$ norm. Since this penalty term maintains the coupling between variables, we temporarily exclude it to obtain the decomposed subproblems. 
\begin{definition}[MILP Decomposition]\label{sub:pro:definition}
We can decompose the feasible set $X$ as $X_i$ by
exploiting the separable structure in \eqref{MILP:original:one}, and the set of net injections $P$ is not separable. The decomposed feasible sets are:
\begin{enumerate}
     \item $X^t_{E,d,a,i} :=\{x_{E,d,a}^t(i)\mid x_{E,d,a}^t(i)\in X^t_{E,d,a,i}\subset \mathbb{R}^{1}\}$, the feasible set for a demand elementary order $i$ at time $t$ and area $a$.
     \item $X_{E,s,a,i} :=\{x_{E,s_k,a}(i)\mid x_{E,s,a}(i)\in X_{E,s,a,i}\subset \mathbb{R}^{|\mathcal{T}|}\}$, the feasible set of a supply elementary order $i$ at area $a$ for all time steps, (see the coupling constraint: load gradient condition \eqref{load:gradient}).
     \item $X_{BO,a,i} :=\{x_{BO,a}(i)\mid x_{BO,a}(i)\in X_{BO,a,i}\subset \mathbb{R}^1\}$, the feasible set of a block order $i$ at the area $a$ which is not coupled with others.
     \item $X_{BO,a,g-c} :=\{x_{BO,a}^{g-c}\mid x_{BO,a}^{g-c}\in X_{BO,a,g-c}\subset \mathbb{R}^{n_{g-c}}\}$, the feasible set of block orders which are coupled together at the area $a$, (see the linked constraint or exclusive group constraint \eqref{block:order:constr:2}).
     \item $X_{FHB,a,i} :=\{x_{FHB,a}(i)~|~x_{FHB,a}(i)\in X_{FHB,a,i}\subset \mathbb{R}^{|\mathcal{T}|}\}$, the flexible hourly order $i$ at the area $a$, (see the coupling constraint \eqref{block:order:constr:3}).
     \item $P :=\{p\in P \subset \mathbb{R}^{|\mathcal{A}|\times |\mathcal{T}|}\}$, the feasible set of net injection variables defined in Section \ref{ALD:to:Euphemia} explained below:
     \begin{itemize}
         \item[-] Set the last node as the reference node, i.e., $p_{|\mathcal{A}|}^t=p_{\text{ref}}^t$, $p^t = \begin{bmatrix}p_{\text{red}}^t&p_{\text{ref}}^t \end{bmatrix}$.  Then by \eqref{power:balance_flow}, $p_{\text{ref}}^t + \mathbf{1}_{|\mathcal{A}|-1}^\top p_{\text{red}}^t= 0, \forall ~t\in \mathcal{T}$ .
         \item[-] The constraints \eqref{ramping:p:hour}, \eqref{ramping:p:daily} can be formulated as $p\in P_{ramp}$, and the constraints \eqref{ramping:f:single}, \eqref{ramping:f:Line:set} can be written as $f\in F_{ramp}$. Recall the line-node incidence matrix $B= \begin{bmatrix} B_1&b\end{bmatrix}$, where $b$ corresponds to the reference node, then $f = B_1 (B_1^\top B_1)^{-1} p_{\text{red}}^t \in P_{ramp}$.
         \item[-] The constraints above can be compactly represented as $p\in P$.
     \end{itemize}
\end{enumerate}
\end{definition}
\par
Note that Def. \ref{sub:pro:definition} presents the interdependencies of the feasible region of each individual problem in Def. \ref{individual:optimization}. For the ease of notation, we still use  $X_i$ to represent an order feasible set.
During the iteration process, given $\lambda^{k+1}\in \mathbb{R}^{|\mathcal{A}|\times |\mathcal{T}|}$, in order to avoid directly solving \eqref{mix:integer:linear:program:ALD}, according to the decomposition in Def. \ref{sub:pro:definition}, one can solve the following subproblems instead.
\begin{itemize}
    \item For the subproblem subjected to an order feasible set $X_i$:
\begin{equation}\label{decision:separate:ALD}
    \begin{aligned}
     x_i^{k+1} \coloneqq \argmin_{x_i \in X_i}\tilde{L}_\rho(x_i,p^{k},\lambda^{k+1}),\\
     \tilde{L}_\rho(x_i,p^{k},\lambda^{k+1}):=~(c_i- q^\top_i\lambda^{k+1})^\top x_i + \rho \| p^k-\langle q_i, x_i\rangle - \langle q_{-i},x^k_{-i}\rangle \|_1, 
\end{aligned}
\end{equation}
where $q_{-i}$ is the quantity vector excluding the part of subproblem $i$.
\item 
For the subproblem subjected to $P$:
\begin{equation}\label{nodal:injection:ALD}
    \begin{aligned}
     p^{k+1} \coloneqq \argmin_{p \in P}\tilde{L}_\rho(x^k,p,\lambda^{k+1}),\\ \tilde{L}_\rho(x^k,p,\lambda^{k+1})
    :=~{\lambda^{k+1}}^\top p + \rho \| p-\sum_i{\langle q_i ,x_i^k\rangle }\|_1.
\end{aligned}
\end{equation}
\end{itemize}
\par
 Among the subproblems, those with feasible sets defined in 1, 2, and 6 are Linear Programming (LP), while those with feasible sets defined in 3, 4, and 5 are small-scale MILPs. Specifically, the subproblem with feasible set 4 represents the largest MILP. Given that most decomposed problems involve only a single variable, computational efficiency can be significantly improved. 
\par
\begin{definition}[The Surrogate Optimality Conditions]\label{Serro:condition}
For the subproblem~\eqref{decision:separate:ALD}, the \textbf{surrogate optimality condition} is defined as
$\tilde{L}_\rho(x_i^{k+1}, p^k, \lambda^{k+1}) < \tilde{L}_\rho(x_i^k, p^k, \lambda^{k+1}),$
and for the subproblem \eqref{nodal:injection:ALD}, the condition is
$\tilde{L}_\rho(x^k, p^{k+1}, \lambda^{k+1}) < \tilde{L}_\rho(x^k, p^k, \lambda^{k+1}).$
\end{definition}
The decomposition in Def. \ref{sub:pro:definition} and the surrogate optimality conditions in Def. \ref{Serro:condition} lead to the approach of updating primal variables in Algorithm \ref{alg:primal:update}.
  \begin{algorithm}[h!]
  \caption{The Primal Update Approach}\label{alg:primal:update}
  \begin{small}
  \begin{algorithmic}[1]
\renewcommand{\algorithmicrequire}{\textbf{Input:}}
\renewcommand{\algorithmicensure}{\textbf{Output:}}
\Require $\lambda^{k+1},\rho^{j+1}>0$ \Comment{A dual variable value, and penalty coefficient}
\Ensure $x^{k+1}, p^{k+1}$ \Comment{Numerical value of order acceptance and net injection}
        \For{$x_i \in X_i$ in Def. \ref{sub:pro:definition}}
            \State  $x_i^{k+1} \gets$ Eq. \eqref{decision:separate:ALD}
            \If{Def. \ref{Serro:condition} is satisfied}\Comment{Check surrogate optimality condition}
                \State $x^{k+1} \gets [x_i^{k+1}, x_{-i}^k], \quad p^{k+1} \gets p^k$\Comment{Primal block update}
                \State \textbf{break} 
            \Else
                \State  $p^{k+1} \gets$ Eq. \eqref{nodal:injection:ALD}\Comment{Go to net injection subproblem}
                \If{Def. \ref{Serro:condition} is satisfied}
                    \State $x^{k+1} \gets x^k, \quad p^{k+1} \gets p^{k+1}$
                \Else
                    \State Combine $X_i$ sets, reformulating \eqref{decision:separate:ALD}, and return to Step 1
                    \If{Step 11 cannot be further advanced}
                        \State $[x^{k+1}, p^{k+1}] \gets \arg\min L_{\rho^{j+1}}(x, p, \lambda^{k+1})$ \Comment{MILP \eqref{mix:integer:linear:program:ALD}}
                    \EndIf
                \EndIf
            \EndIf
        \EndFor
        \end{algorithmic}
        \end{small}
  \end{algorithm}
   \begin{algorithm}[h!]
\caption{The modified SAVLR Method}\label{alg:com:penalty_coe}
\begin{small}
\begin{algorithmic}[1]
\renewcommand{\algorithmicrequire}{\textbf{Input:}}
\renewcommand{\algorithmicensure}{\textbf{Output:}}

\Require $\lambda^1$, $\rho^1 > 0$, $\beta > 1$, and optimal primal solution $(x^*, p^*)$ of \eqref{mix:integer:linear:program}.
\Ensure $\lambda^{j+1},x^{j+1},p^{j+1}$
\State \textbf{Initialization:} 
\State \quad $(x^1, p^1) \gets \arg\min_{x,p} L_{\rho^1}(x, p, \lambda^1)$, $\hat{z}^1 \gets z^{MIP}$
\State \quad $\sigma^1 \gets \frac{\hat{z}^1 - L_{\rho^1}(\lambda^1, x^1, p^1)}{\| \langle q, x^1 \rangle - p^1 \|_1^2}$, $d^1 \gets p^1 - \langle q, x^1 \rangle, \lambda^2 = \lambda^1 + \sigma^1 d^1\quad j \gets 1$
\While{$\|L_{\rho^j}(x^{j+1}, p^{j+1}, \lambda^{j+1}) - z^{MIP}\|_1 / \|z^{MIP}\|_1 > \epsilon_b$}
    \State $k \gets 1, \quad \rho^{j+1} \gets \beta\rho^j $ \label{step:update_rho} \Comment{Penalty coefficient update}
    \While{$\|\lambda^{k+1} - \lambda^k\| > \epsilon_d$}
    \State $x^{k+1}, p^{k+1}\gets$ Algorithm \ref{alg:primal:update} with input $\lambda^{k+1}, \rho^{j+1}$.
    \State Update $\sigma^{k+1}$ as \eqref{step:length}
    \If{$k>N$}
    \If{$\sigma^{k+1}\ll   \frac{z^{MIP}- z_{\rho}^{ALD}(\lambda^k)}{\|p^{k+1}- \langle q,x^{k+1} \rangle \|_1^2} ~\textbf{or}~ \sigma^{k+1}  > \frac{z^{MIP}- z_{\rho}^{ALD}(\lambda^k)}{\|p^{k+1}- \langle q,x^{k+1} \rangle \|_1^2}$}
    \State{$\sigma^{k+1}=   \frac{z^{MIP}- z_{\rho}^{ALD}(\lambda^k)}{\|p^{k+1}- \langle q,x^{k+1} \rangle \|_1^2}$}
    \Comment{$z_{\rho}^{\text{ALD}}(\lambda^k)\gets$ MILP \eqref{mix:integer:linear:program:ALD}}
    \EndIf
    \EndIf
        \State Update $d^{k+1}$ as \eqref{dual:update} 
    \State $\lambda^{k+2} \gets \lambda^{k+1} + \sigma^{k+1} d^{k+1}$,
    \State $k\gets k+1$ \Comment{Dual update}
    \EndWhile
    \State $\lambda^{j+1} \gets \lambda^{k+1}$
    \State $(x^{j+1}, p^{j+1}) \gets \arg\min L_{\rho^{j+1}}(x, p, \lambda^{j+1})$\Comment{Outer level}
    
\EndWhile
\end{algorithmic}
\end{small}
\end{algorithm}
  For a large penalty coefficient $\rho$, the classical SAVLR method may converge to a feasible but not optimal point $\lambda'\in \mathbb{R}^{|\mathcal{A}|\times |\mathcal{T}|}$, see \citep[Prop. 1]{bragin2018scalable}. Therefore, the authors of \citep{bragin2018scalable} reduce the penalty coefficient per iteration, by $\rho^{k+1} =\rho ^k/\beta,\beta>1$, aiming at satisfying the surrogate optimality condition [Def. \ref{Serro:condition}]. In the worst case $\rho = 0$, the SAVLR method decreases to the Surrogate Lagrangian Relaxation method in \citep{bragin2015convergence}, where there are theorems ensuring the convergence to the dual optimal point. However, if the penalty coefficient is decreased, the duality gap can be enlarged, see Fig.~\ref{centralized:ALD:with:rho:one:area}, 
  which will destroy the proposed pricing mechanism. Therefore, we apply the steps introduced below instead.
   \begin{remark}[Modification of SAVLR]\label{modification:SAVLR}
      The steps 11, 12, 13 of Algorithm \ref{alg:primal:update} are the critical steps for the modification of the classical SAVLR method, which ensures that the outer level approach to be introduced in Algorithm \ref{alg:com:penalty_coe} computes the exact penalty coefficient $\rho \in [\rho^*, \overline{\rho}]$.
    \end{remark}
  \par
 By the combination technique introduced in Remark \ref{modification:SAVLR}, the distance of the difference between the surrogate subgradient $g_s = p^{k+1}-\sum_i \langle q_i,x_i^{k+1}\rangle$, and the real subgradient $g_r = p^{k+1}- \langle q,x^{k+1}\rangle$: $\|g_s -g_r\|$ decreases. Then, it is more likely that the surrogate optimality condition will be satisfied. The worst case is that all the subproblems need to be combined, i.e., solve MILP \eqref{mix:integer:linear:program:ALD} to compute the real subgradient; in that case, the decomposition loses its validity, which constitutes the primary limitation of our modified approach.  
\subsection{The dual update}
At iteration $k$, with  $\lambda^{k+1}\in \mathbb{R}^{|\mathcal{A}|\times |\mathcal{T}|}, \rho^{k+1}>0$, and primal solution $(x^{k+1}, p^{k+1})$ updated by Algorithm \ref{alg:primal:update}, the dual variable update is as follows: 
\begin{align}
      s^{k+1} &= 1-\frac{1}{{(k+1)}^{r_k}}, M_k\geq 1, r_k\in [0,1],\nonumber\\
\sigma^{k+1}&= (1-\frac{1}{M_k(k+1)^{s^{k+1}}}) \frac{\sigma^{k}\|p^{k}-\sum_i \langle q_i,x_i^{k}\rangle\|_1}{\|p^{k+1}-\sum_i \langle q_i,x_i^{k+1}\rangle \|_1},\label{step:length}\\
    d^{k+1} &= p^{k+1}-\sum_i \langle q_i,x_i^{k+1}\rangle,~
    \lambda ^{k+2} = \lambda^{k+1} + \sigma^{k+1} d^{k+1},\label{dual:update}
    \end{align}  
    where in \eqref{step:length}, the sequences $\{M_k\}$ and $\{r_k\}$ monotonically decrease to $M \geq 1$ and $r \in [0, 1]$, respectively, and the generated step length $\sigma^{k+1}$ is called an adaptive step length \citep[Sections 3.2.1]{bragin2016efficient}. An example of the parameter sequences can be $M_k = 2000/\sqrt{k} + 1, r_k = 1/\sqrt{k} + 0.01$. Before introducing the convergence theorem, we introduce the following remark.
    \begin{remark}\label{step:re:initialization}
The technique used at steps 10 and 11 of Algorithm \ref{alg:com:penalty_coe} is called Stepsize Re-initialization, \citep[Section 3.2.2]{bragin2016efficient}, which is used to avoid oscillation. This technique involves solving the centralized MILP \eqref{mix:integer:linear:program:ALD} and is applied only after sufficient iterations of the surrogate subgradient. 
\end{remark}
    The convergence theorem for the sequence of generated dual variable values is then introduced.
    \begin{theorem}[Dual Convergence]\label{convergence:modified:SAVLR}
        With the surrogate subgradient $d^{k+1} := p^{k+1}-\sum_i \langle q_i,x_i^{k+1}\rangle$ updated from Algorithm \ref{alg:primal:update}, and the sequence $\{\lambda^1,\lambda^2,\cdots\}$ generated by Algorithm \ref{alg:com:penalty_coe} converges to a unique fixed point $\lambda^*$, where $\lambda^* = \argmax\limits_{\lambda \in \mathbb{R}^{|\mathcal{A}|\times |\mathcal{T}|}} {~z_\rho^{ALD}(\lambda)}$,(see \citet[Theorem 2.1]{bragin2015convergence} and \citet[Theorem 1]{bragin2018scalable}). 
    \end{theorem}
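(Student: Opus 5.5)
We will follow the standard surrogate-subgradient convergence argument of \citet[Theorem 2.1]{bragin2015convergence}, adapted as in \citet[Theorem 1]{bragin2018scalable}, applied to the inner loop of Algorithm \ref{alg:com:penalty_coe} for a fixed penalty coefficient $\rho=\rho^{j+1}$. The argument splits into three parts: the inner loop for fixed $\rho$, the behaviour of the outer loop, and the meaning of ``unique fixed point.''

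\textbf{Inner loop: distance decrease.} By Lemma \ref{proper:dual:optimization:ALD}, $z^{ALD}_\rho$ is concave and piecewise linear. Let $\lambda^*$ be a maximizer. The target is the inequality
\[
\|\lambda^*-\lambda^{k+2}\|^2<\|\lambda^*-\lambda^{k+1}\|^2 ,
\]
valid whenever $0<\sigma^{k+1}<\bigl(z^{ALD}_\rho(\lambda^*)-\tilde L_\rho(x^{k+1},p^{k+1},\lambda^{k+1})\bigr)/\|d^{k+1}\|^2$. The steps are:
\begin{enumerate}
\item Show that $d^{k+1}$ is a surrogate subgradient:
\[
z^{ALD}_\rho(\lambda^*)-\tilde L_\rho(x^{k+1},p^{k+1},\lambda^{k+1})\ \ge\ \langle \lambda^*-\lambda^{k+1},d^{k+1}\rangle .
\]
This follows because the relaxed Lagrangian is affine in $\lambda$ and the dual function is its minimum over $X\times P$.
\item Use the surrogate optimality condition (Def.~\ref{Serro:condition}), enforced in Algorithm \ref{alg:primal:update}, to guarantee $\tilde L_\rho(x^{k+1},p^{k+1},\lambda^{k+1})<z^{ALD}_\rho(\lambda^*)$. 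The combination steps 11--13 supply the fallback: if no decomposed block satisfies the condition, the full MILP \eqref{mix:integer:linear:program:ALD} is solved. Then $d^{k+1}$ is an exact subgradient and the inequality holds trivially.
\item Expand $\|\lambda^*-\lambda^{k+1}-\sigma^{k+1}d^{k+1}\|^2$ and combine it with step 1 to obtain the distance decrease.
\end{enumerate}

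\textbf{Inner loop: step sizes.} We must verify that the adaptive rule \eqref{step:length} keeps $\sigma^{k+1}$ below the bound above. The plan is the following:
\begin{enumerate}
\item Initialization gives $\sigma^1$ exactly the Polyak-type value with $\hat z^1=z^{MIP}=z^{ALD}_\rho(\lambda^*)$, using Theorem \ref{exists:threshold} once $\rho\ge\rho^*$.
\item Show inductively that $\sigma^{k}\|d^{k}\|$ is strictly decreasing, via the factor $1-1/(M_k(k+1)^{s^{k+1}})<1$.
\item Show that the gap $z^{MIP}-\tilde L$ contracts no faster than this.
\item The re-initialization of Remark \ref{step:re:initialization} resets $\sigma$ to the Polyak value whenever the bound would be violated. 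This guarantees the required inequality after step $N$.
\item The choice of $M_k$ and $r_k$ ensures $\prod_k\bigl(1-1/(M_k k^{s^k})\bigr)=0$ while $\sum_k \sigma^k$ diverges slowly enough. This is the condition used in \citet{bragin2016efficient}, and it yields $\sigma^k\to0$ and $\|d^k\|\to0$ or bounded.
\item Combined with the monotone decrease of $\|\lambda^*-\lambda^k\|$, a standard argument via a convergent subsequence and the closedness of the optimal set gives $\lambda^k\to\lambda^*$.
\end{enumerate}

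\textbf{Outer loop and the main obstacle.} In the outer loop, $\rho^j=\beta^{j-1}\rho^1$ increases geometrically. By Theorem \ref{exists:threshold}, after finitely many outer iterations $\rho^j\ge\rho^*$, so the duality gap closes and the stopping criterion is eventually met. From then on the inner-loop result applies with fixed $\rho$.

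The main obstacle is uniqueness of the limit. The set of maximizers of a concave piecewise-linear function need not be a singleton. I would therefore interpret ``unique fixed point'' as follows: the whole sequence converges to a single point of $\arg\max z^{ALD}_\rho$, since Fej\'er monotonicity with respect to every maximizer forces all cluster points to coincide. I would not claim uniqueness of the maximizer itself.

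The second delicate point is the condition $\hat z=z^{MIP}$ used in the step-size bound. It relies on strong duality, which holds only for $\rho\ge\rho^*$. For earlier outer iterations I would only claim convergence toward a maximizer of the current $z^{ALD}_{\rho^j}$, using the estimated $\hat z$ as in \citet[Theorem 1]{bragin2018scalable}.
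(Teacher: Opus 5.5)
Your overall strategy matches the paper's, which gives no argument of its own and simply invokes \citet[Theorem 2.1]{bragin2015convergence} and \citet[Theorem 1]{bragin2018scalable}. Your reading of ``unique fixed point'' as convergence of the whole sequence to one maximizer, rather than uniqueness of the maximizer, is the sensible one. The self-contained reconstruction, however, does not go through as written.

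Write $\tilde L^{k}:=L_\rho(x^{k},p^{k},\lambda^{k})$. Item 1 of your distance-decrease list has the inequality reversed. Since $L_\rho$ is affine in $\lambda$ with slope $d^{k+1}$, one has $z^{ALD}_\rho(\lambda^*)\le L_\rho(x^{k+1},p^{k+1},\lambda^*)=\tilde L^{k+1}+\langle\lambda^*-\lambda^{k+1},d^{k+1}\rangle$. What actually holds is therefore $\langle\lambda^*-\lambda^{k+1},d^{k+1}\rangle\ge z^{ALD}_\rho(\lambda^*)-\tilde L^{k+1}$, and only this lower bound makes the expansion in item 3 give a decrease. Also, $\tilde L^{k+1}<z^{ALD}_\rho(\lambda^*)$ does not follow from Def.~\ref{Serro:condition} alone. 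It requires an induction that uses the previous step-size bound through $L_\rho(x^{k},p^{k},\lambda^{k+1})=\tilde L^{k}+\sigma^{k}\|d^{k}\|_2^2$, so the lower-bound property and the step-size bound have to be proved jointly.

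The substantive gap is in your step-size list. Nothing in \eqref{step:length} ties the factor $1-1/(M_k(k+1)^{s^{k+1}})$ to the decay of $z^{MIP}-\tilde L^{k}$, so item 3 (the gap ``contracts no faster'') cannot be established in general. The cited theorems are built on the contraction of $\|\lambda^{k+1}-\lambda^{k}\|$ precisely so that no Polyak-type bound involving the optimal value has to hold at every iteration.

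The re-initialization does not repair this, for two reasons. Steps 10--11 of Algorithm~\ref{alg:com:penalty_coe} reset to $(z^{MIP}-z^{ALD}_\rho(\lambda^k))/\|d^{k+1}\|_1^2$. This uses the exact dual value at the old multiplier instead of $\tilde L^{k+1}$, so it need not satisfy your bound. Moreover, the ``$\ll$'' branch can enlarge $\sigma^{k+1}$. That destroys the monotone decay of $\sigma^k\|d^k\|$ on which your items 2 and 5 rely, unless re-initialization is triggered only finitely often.

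Item 6 fails for a structural reason. With surrogate values, Fej\'er monotonicity only yields $\sum_k\sigma^k\bigl(z^{ALD}_\rho(\lambda^*)-\tilde L^{k}\bigr)<\infty$. Since $\tilde L^{k}\ge z^{ALD}_\rho(\lambda^k)$, even $\tilde L^{k}\to z^{ALD}_\rho(\lambda^*)$ along a subsequence says nothing about whether a cluster point maximizes $z^{ALD}_\rho$. The exact-subgradient ``closed optimal set'' argument therefore does not transfer.

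To finish, you have two options. One is to use the cited theorems as black boxes and verify their hypotheses for Algorithms~\ref{alg:primal:update} and~\ref{alg:com:penalty_coe}: fixed $\rho$ within the inner loop, surrogate optimality or the full-MILP fallback at every iteration, the stepsize rule with the stated $M_k,r_k$, and finitely many re-initializations. The other is to supply an additional argument that $\tilde L^{k}-z^{ALD}_\rho(\lambda^k)\to 0$.
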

    \par
    The preceding discussion leads to the two-level Algorithm \ref{alg:com:penalty_coe}. A similar approach for computing the exact penalty coefficient can be found in \citep[Section 4.1]{chen2010extended}. 
    Furthermore, as shown in \citep[Theorem 14]{lefebvre2024exact}, $\forall ~\lambda \in \mathbb{R}^{|\mathcal{A}|\times |\mathcal{T}|}$ the exact penalty coefficient $\rho(\lambda)$ can be computed in polynomial time. Consequently, while the centralized MILP is occasionally solved, the computational complexity of Algorithm \ref{alg:com:penalty_coe} is primarily determined by the primal MILP subproblems. By exploiting the computational efficiency of high-performance commercial MILP solvers, the proposed algorithm demonstrates strong scalability on large-scale market-clearing instances. We therefore conclude that the ALD pricing mechanism introduced in Def. \ref{def:ald:pricing} is both theoretically rigorous and computationally tractable.
\section{Illustrative examples}\label{compare:ALD:and:others}
This section compares the ALD pricing mechanism (proposed in Section \ref{ALD:pricing:proposing}) 
with existing literature benchmarks. We utilize a representative example from \citep{schiro2015convex} which, despite modeling a unit commitment problem, effectively illustrates the properties of different pricing schemes.
We compare ALD with Integer Programming (IP) pricing \citep{o2005efficient}, Semi-Lagrangian Relaxation (SLR) pricing \citep{araoz2011semi}, Primal-Dual (PD) pricing \citep{rui2012}, and Convex Hull pricing (CHP) \citep{schiro2015convex}.
\par
\par To comprehensively evaluate these mechanisms, we analyze the Lost Opportunity Cost (LOC), Individual Surplus, Total Consumer Payment, and Uplift Payment. These metrics directly correspond to the critical market properties of incentive compatibility, individual revenue adequacy, economic efficiency, and pricing transparency, respectively.
\subsection[Demonstrative examples]{A demonstrative example \citep{schiro2015convex}}
\label{sec:shiro:sexample}
\paragraph{\textbf{The 1-area case}}
The optimization problem is formulated as follows: 
\begin{equation}\label{one:area:case}
\begin{aligned}
\min~& 2500 x_1 + 500 x_2 \\
\text{s.t.}~&50 x_1 + 50 x_2 = 35,~ x_1\in [0.2, 1],~ x_2\in \{0,1\}.
\end{aligned}
\end{equation}
It is obvious that the optimal solution of \eqref{one:area:case} is $x_1 = 0.7, x_2 = 0$, and the corresponding optimal objective function value is $1750$.
\paragraph{\textbf{The 2-area case}}
The optimization problem is as follows: 
\begin{equation}\label{two:area:case}
\begin{aligned}
\min~&2500 x_1 + 500 x_2\\
\text{s.t.}~&\begin{bmatrix} p_1\\-p_1\end{bmatrix}= \begin{bmatrix}50 x_1-35\\50 x_2\end{bmatrix},\\
&~\qquad p_1 = f,~-10\leq f \leq 10,~ x_1\in [0.2, 1],~ x_2\in \{0,1\}.
\end{aligned}
\end{equation}
Evidently, the optimal solution of \eqref{two:area:case} is $x_1= 0.7,x_2 = 0,p_1 =0, f = 0$, and the corresponding optimal objective function value is $1750$.
\paragraph{\textbf{Benchmark pricing to the 1-area case}}
Given that the pricing mechanisms under comparison are well established and the illustrative example is straightforward, detailed derivations are omitted for brevity. It is noteworthy that the implementation complexity of the Primal-Dual (PD) pricing mechanism primarily stems from the linearization of the revenue adequacy constraint via the Big-M method and the associated discretization procedures. In this instance, the LP relaxation yields a price of 10 EUR/MWh. Notably, the PD price exhibits a significant departure from this LP relaxation benchmark (see \citep[abstract]{rui2012}), while closely aligning with the results of Semi-Lagrangian Relaxation (SLR). The performance metrics for each mechanism are summarized in Table \ref{each:pricing:one:area}, with the following notations: LMP (Locational Marginal Price, EUR/MWh), UP (Uplift Payment, EUR), LOC (Lost Opportunity Cost, EUR), and $S$ (Surplus, EUR).
\renewcommand{\arraystretch}{0.8}
\begin{table}[htbp]
\centering
\small
\begin{tabular}{rrrrrr}
\toprule
&$\text{LMP}$&UP($G_1$)&UP($G_2$)&LOC($G_1$)& LOC($G_2$)\\
\midrule
IP&50.00&0.00&2000.00&0.00&2000.00\\
SLR&50.00&0.00&0.00&0.00&2000.00\\
PD&50.42&0.00&0.00&0.00&2000.00 \\
CHP&10.00&1000.00&0.00&1000.00&0.00\\
ALD&10.00&0.00&0.00&0.00&0.00\\
\bottomrule
\end{tabular}\\
\begin{tabular}{rrrr}
\toprule
S($G_1$)&S($G_2$)&Total payment\\
\midrule
0.00&2000.00&3750.00\\
0.00&0.00&1750.00\\
0.00&0.00&1750.00\\
-400.00&0.00&1350.00\\
0.00&0.00&1750.00\\
\bottomrule
\end{tabular}
\caption{The results of each pricing mechanism of the 1-area case.}
\label{each:pricing:one:area}
\end{table}
\paragraph{\textbf{Benchmark pricing to the 2-area case}}
The numerical results for each pricing mechanism in the 2-area scenario are summarized in Table \ref{each:pricing:two:area}. To accommodate the multi-area setting, the Transmission System Operator (TSO) is incorporated as a critical market entity. We evaluate the TSO's financial performance by analyzing its surplus, lost opportunity cost, and uplift payment, denoted respectively as $S(\text{TSO})$, $\text{LOC}(\text{TSO})$, and $\text{UP}(\text{TSO})$ (all in EUR). Notably, the LMPs derived from the LP relaxation are $\text{LMP}_1 = 50$ EUR/MWh and $\text{LMP}_2 = 10$ EUR/MWh. These prices result in revenue inadequacy for the TSO, as the price differential incentivizes cross-area transmission while the physical flow remains zero. As illustrated in Table \ref{each:pricing:two:area}, the primal-dual pricing results in this 2-area case are close to the LP relaxation benchmarks, consistent with \citet[abstract]{rui2012}.
\paragraph{\textbf{Comparative analysis}}
The results in Table \ref{each:pricing:one:area} clearly demonstrate that the ALD pricing mechanism achieves the highest economic efficiency, as evidenced by its total payment of $1750$ EUR, the lowest among all listed pricing mechanisms. In particular, the ALD mechanism operates without any uplift payments ($\text{UP}(G_1) = \text{UP}(G_2) = 0$ EUR), ensuring that price signals remain transparent to all market orders. Furthermore, the vanishing lost opportunity costs ($\text{LOC}(G_1) = \text{LOC}(G_2) = 0$ EUR) numerically validate the incentive compatibility property established in Theorem \ref{incentive:compatibility:ALD}.
\par
Complementing the comparative analysis in Table \ref{each:pricing:one:area}, Table \ref{each:pricing:two:area} provides the TSO's computational results. Since the general performance of each pricing mechanism remains consistent across both tables, in Table \ref{each:pricing:two:area} we focus specifically on the TSO's financial position. Under PD pricing, the TSO incurs a lost opportunity cost (LOC) of 400 EUR, revealing potential incentive issues. Similarly, CHP pricing yields a 400 EUR LOC, which must be covered by uplift payments. In contrast, the ALD pricing mechanism eliminates both the LOC and the need for uplift payments, thereby ensuring incentive compatibility and an uplift-free settlement for the TSO in this scenario.
\renewcommand{\arraystretch}{0.8}
\begin{table}[!htbp]
\centering
\small

\begin{tabular}{rrrrrrr}
\toprule
&$\text{LMP}_1$&$\text{LMP}_2$&UP($G_1$)&UP($G_2$)&LOC($G_1$)& LOC($G_2$)\\
\midrule
IP&50.00&50.00&0.00&2000.00&0.00&2000.00\\
SLR&50.00&50.00&0.00&0.00&0.00&2000.00\\
PD&50.42&10.00&0.00&0.00&0.00&0.00 \\
CHP&50.00&10.00&0.00&0.00&0.00&0.00\\
ALD&25.00&25.00&0.00&0.00&0.00&0.00\\
\bottomrule
\end{tabular}
\begin{tabular}{rrrrrr}
\toprule
S($G_1$)&S($G_2$)&UP(TSO)&S(TSO)&LOC(TSO)&Total payment\\
\midrule
0.00&2000.00&0.00&0.00&0.00&3750.00\\
0.00&0.00&0.00&0.00&0.00&1750.00 \\
0.00&0.00&0.00&0.00 &400.00&1750.00\\
0.00&0.00&400.00&400.00 &400.00&2150.00\\
0.00&0.00&0.00&0.00&0.00&1750.00\\
\bottomrule
\end{tabular}
\caption{The results of each pricing mechanism of the 2-area case.}
\label{each:pricing:two:area}
\end{table}
\par
It is noteworthy that the results of each pricing mechanism of this academic example mutually confirm their properties presented in Table~\ref{tab:pricing_comparison}.
\subsection{Results of the ALD pricing mechanism} \label{Application:ALD:pricing:mechanism}
This section illustrates the performance of the ALD pricing mechanism introduced in Definition \ref{def:ald:pricing} using the benchmark case from Section \ref{sec:shiro:sexample}.
\paragraph{\textbf{The 1-area case}}
Applying the ALD approach to \eqref{one:area:case}, and reformulating the absolute value operator yields the following optimization problems:\\
\begin{equation*}
\begin{aligned}
& \min_{x_1, x_2} \quad 2500 x_1 + 500 x_2 + \lambda(35-50 x_1 -50 x_2) + \rho |35-50 x_1 -50 x_2| \\
& \text{s.t.} \quad x_1 \in [0.2, 1], \quad x_2 \in \{0, 1\}.
\end{aligned}
\end{equation*}
\begin{center}
    $\downarrow$ \text{\scriptsize Reformulation}
\end{center}
\begin{equation}\label{ALD:shiro:example:one:area}
\begin{aligned}
& \min_{x_1, x_2, q} \quad 2500 x_1 + 500 x_2 + \lambda(35-50 x_1 -50 x_2) + \rho q \\
& \text{s.t.} \quad q \geq 35-50 x_1 -50 x_2,\quad q \geq -(35-50 x_1 -50 x_2) \\
& \phantom{\text{s.t.}} \quad q\geq 0, \quad x_1 \in [0.2, 1], \quad x_2 \in \{0, 1\}.
\end{aligned}
\end{equation}
Eq.~\eqref{ALD:shiro:example:one:area} involves a bilinear expression $\lambda(35-50 x_1 -50 x_2)$. Fixing $\lambda = \lambda^* \in \mathbb{R}$, the model transforms into a simple MILP. This allows for the characterization of the dual function under different penalty coefficients $\rho$, with the results depicted in Fig. \ref{centralized:ALD:with:rho:one:area}.
\begin{figure}[htbp]
  \centering
    \subfigure[Centralized ALD]{
    \includegraphics[width=0.31\linewidth, trim={0 0 0 0}, clip]{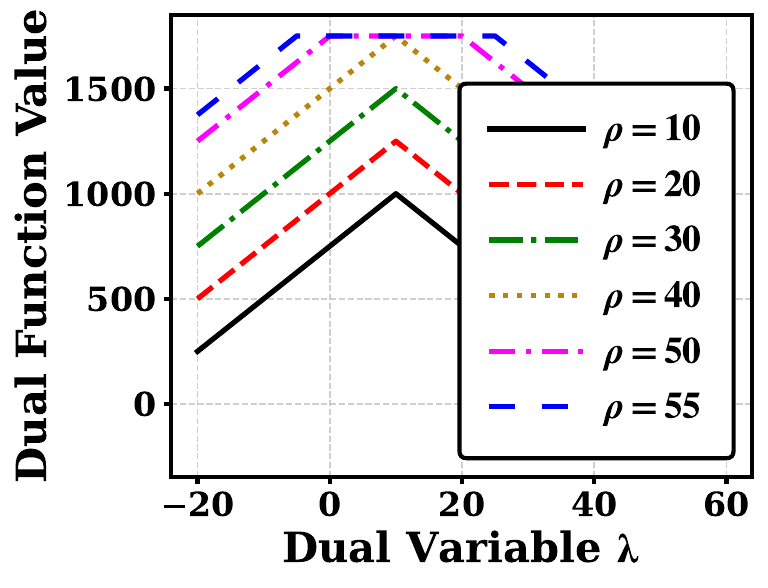}
    \label{centralized:ALD:with:rho:one:area}
  }
  \hfill
  \subfigure[ALD for generator 1]{
    \includegraphics[width=0.31\linewidth, trim={0 0 0 0}, clip]{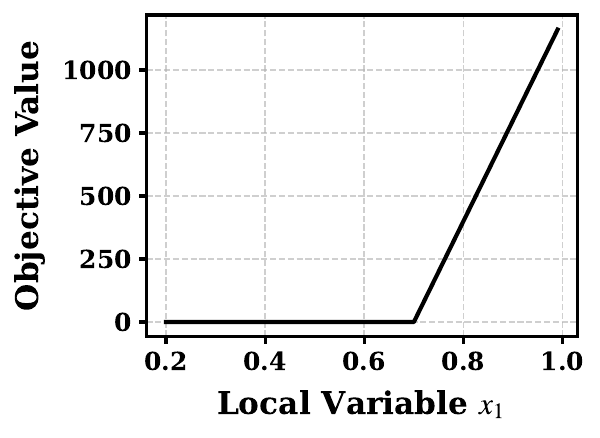}
    \label{fig:ALD:one:area:generator1}
  }\hfill
  \subfigure[ALD for generator 2]{
    \includegraphics[width=0.31\linewidth, trim={0 0 0 0}, clip]{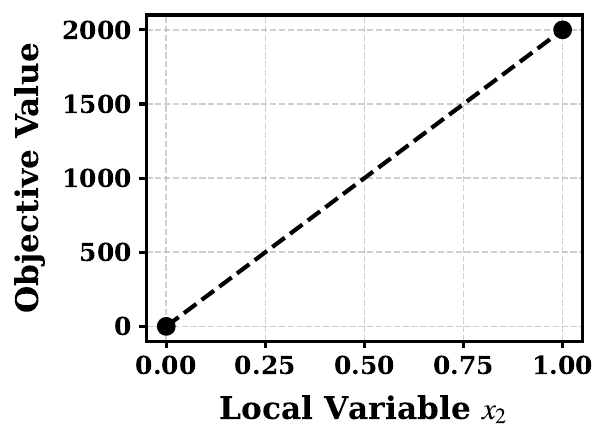}
    \label{fig:ALD:one:area:generator2}
  }
  \caption{The ALD approach to 1-area case}
  \label{fig:centralized:ALD:one:area}
\end{figure}
\par
Fig. \ref{centralized:ALD:with:rho:one:area} clearly identifies $\rho^* = 40$ as the threshold that closes the duality gap. For $\rho = \rho^*$, the unique optimal dual variable is $\lambda^* = 10$, whereas for $\rho = 45$, the optimal dual set expands to $\Lambda = [5, 15]$. Based on the ALD pricing rules in Definition \ref{def:ald:pricing}, both $(40, 10)$ and $(45, 5)$ constitute valid ALD price signals. The resulting individual optimization problems, as defined in Def. \ref{individual:optimization}, are provided below:
\begin{equation}\label{generator:1:one:area}
\begin{aligned}
&\text{For generator 1:} \min{2500 x_1  + \lambda^* (-50 x_1) + \rho |35-50 x_1| - \rho \times 50 \times x_1^*},\\
&\qquad\qquad \qquad\qquad\text{s.t.}~x_1\in [0.2, 1], x_1^* = 0.7;
\end{aligned}
\end{equation}
\begin{equation}\label{generator:2:one:area}
\begin{aligned}
&\text{For generator 2:} \min{500 x_2+ \lambda^* (-50 x_2) + \rho |-50 x_2|- \rho \times 50 \times x_2^*},\\
&\qquad\qquad \qquad\qquad \text{s.t.}~ x_2\in \{0,1\}, x_2^* = 0.
\end{aligned}
\end{equation}
The objective functions of problems \eqref{generator:1:one:area} and \eqref{generator:2:one:area} are illustrated in Fig.~\ref{fig:ALD:one:area:generator1} and Fig.~\ref{fig:ALD:one:area:generator2}, respectively. These figures depict the case with $\rho = 40$ and $\lambda^* = 10$; notably, the results for the case where $\rho = 50$ and $\lambda^* = 0$ exhibit similar characteristics. Note that the optimization problems are formulated as minimization problems; a positive objective value corresponds to a financial loss. It is clear from the observation that $x_1^* = 0.7$ and $x_2^* = 0$ are the respective optimal solutions.  Consequently, neither generator has an incentive to deviate, as any departure from these values would decrease their surplus. This observation corroborates the incentive compatibility property introduced in Theorem \ref{incentive:compatibility:ALD}. The observation that the optimal objective values are zero implies that the generators' costs are fully recovered by their revenues, thereby validating Theorem \ref{individual:revenue:adequacy:supply:order} regarding individual revenue adequacy.
\begin{figure}[t]
  \centering
  \subfigure[The dual function with $\rho = \rho^* =25$]{
    \includegraphics[width=0.46\textwidth]{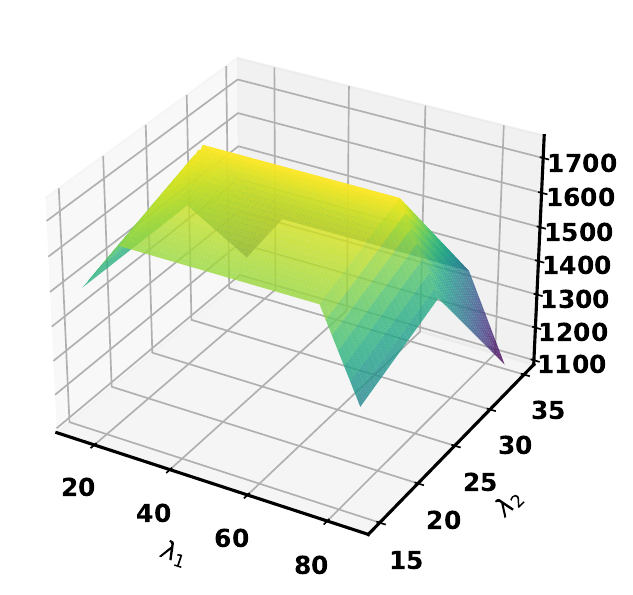}
    \label{centralized:ALD:with:rho25:two:area}
  }\hfill
  \subfigure[The dual function with $\rho =30> \rho^*$]{
    \includegraphics[width=0.46\textwidth]{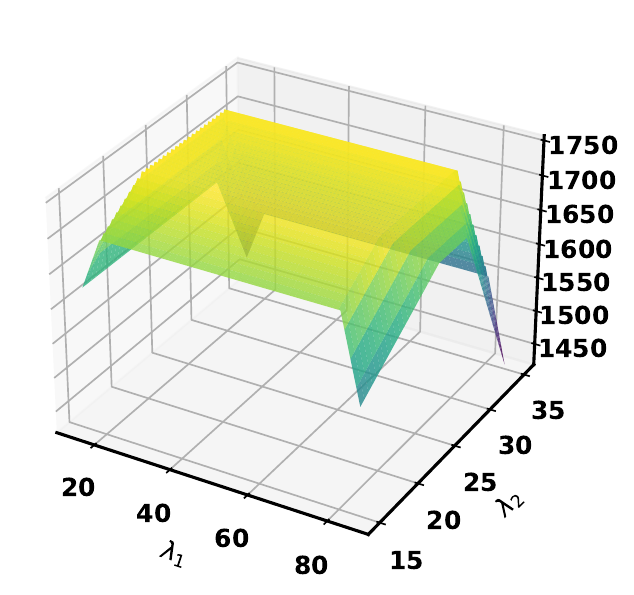}
    \label{centralized:ALD:with:rho30:two:area}
  }
  \caption{The ALD approach to the 2-area case}
  \label{centralized:ALD:with:rho30:two:area}
\end{figure}
\paragraph{\textbf{The 2-area case}}
We extend the ALD framework to the 2-area configuration \eqref{two:area:case}. The dual functions for various penalty coefficients are depicted in Fig.~\ref{centralized:ALD:with:rho30:two:area}, identifying the critical threshold $\rho^* = 25$ required to eliminate the duality gap. At this threshold, the optimal dual set is $\Lambda = \{\lambda_1 \in [25, 75], \lambda_2 = 25\}$. Increasing the penalty to $\rho = 30$ expands the set to $\Lambda = \{\lambda_1 \in [20, 80], \lambda_2 \in [20, 30]\}$. For market clearing, the operator may select $(\rho, \lambda_1^*, \lambda_2^*) = (25, 25, 25)$ as price signals. The decentralized optimization for generators follows the structure of Eq.~\eqref{generator:1:one:area}--\eqref{generator:2:one:area}, with the resulting objective landscapes aligning with Fig.~\ref{fig:ALD:one:area:generator1} and Fig.~\ref{fig:ALD:one:area:generator2}. This confirms that incentive compatibility and revenue adequacy are preserved in multi-area configurations. Furthermore, in the absence of congestion, the nodal price differential remains zero ($\lambda_1^* - \lambda_2^* = 0$) regardless of whether $(25, 25, 25)$ or $(30, 20, 20)$ is chosen. Under these signals, the TSO has no financial incentive to initiate power transfers, as the zero price yields no congestion revenue, while any deviation would incur a penalty. Consequently, the TSO also achieves revenue adequacy in this scenario. 
\par
The comprehensive results for the ALD approach are presented in Table~\ref{each:pricing:one:area} and Table~\ref{each:pricing:two:area}, with the comparative analysis provided above. 
\section{Numerical experiments}\label{computational:results:ALD:stylized:example}
In this section, we present numerical examples that apply Algorithm~\ref{alg:primal:update} and Algorithm~\ref{alg:com:penalty_coe} to demonstrate the ALD pricing mechanism. The example we employ is the Nordic day-ahead electricity market. The bidding areas and transmission lines are shown in Fig.~\ref{nordic:market}. The supply offer prices, and the demand bidding prices are produced such that each demand bidding price exceeds the largest supply offer price at each $t\in \mathcal{T}$, i.e., satisfying the sufficient condition in Corollary~\ref{coro:consumer}. To make the prices more realistic, we set the supply offer prices to gradually reach their peak values during the time periods $[7,11]$ and $[17,21]$. The capacities are configured such that the total potential supply capacity exceeds the maximum aggregate demand to ensure that the primal problem \eqref{MILP:original:one} is feasible. The number of orders in each bidding area is generated by random number generators, and Table~\ref{table:number:each:order} shows the number of different types of orders in the bidding area $10$. The ramping coefficients of network constraints are generated in accordance with the ratios of realistic energy sources, e.g., hydro power, gas, nuclear, etc., in each Nordic country. Additionally, we use the Fixed Binary-Variable (FBV) pricing method presented in \citep[Section 5]{chatzigiannis2016european} as a comparison. Distinct from the traditional Integer Programming (IP) pricing \citep{o2005efficient}, our approach follows the logic of \citep[Section 5]{chatzigiannis2016european}. We refer to this as Fixed Binary-Variable Pricing, in which the binary commitment status is fixed at its optimal value in the final price-setting stage. This ensures that the resulting LMPs reflect the marginal cost without including uplift payments.
\par
It is worth noting that the fundamental properties of major pricing mechanisms have been rigorously compared in Section \ref{compare:ALD:and:others} using an academic case. In this stylized study, further comparison with other mechanisms would primarily entail complex implementation, such as characterizing the convex hulls of feasible regions under exclusive-group, linked-block, and flexible hourly constraints, without yielding additional theoretical insights. Consequently, Fixed Binary-Variable pricing is selected as the representative benchmark for this large-scale computation. Moreover, all numerical simulations are executed in the Pyomo modeling environment using the Gurobi solver. All computations are performed on a machine equipped with an Apple M4 Pro chip and 24 GB of RAM.\\
\begin{figure}[h]
\centering
\begin{tikzpicture}[
    scale=0.8, transform shape, 
    box/.style={rectangle, draw, rounded corners=2pt, minimum width=1.4cm, minimum height=0.6cm, 
                inner sep=2pt, font=\small, align=center},
    NO/.style={box, fill=black!30},   
    SE/.style={box, fill=blue!20},    
    FI/.style={box, fill=green!20},   
    DK/.style={box, fill=red!20},     
    edge label/.style={font=\footnotesize\bfseries, inner sep=1.5pt} 
]

    \node[NO] (NO4) at (0,5)     {NO4(6)};
    
    \node[NO] (NO3) at (-2.5,3)  {NO3(5)};
    \node[SE] (SE2) at (0,3)     {SE2(9)};
    \node[SE] (SE1) at (3,4)     {SE1(8)};
    \node[FI] (FI)  at (4,2.5)   {FI(2)};
    
    \node[NO] (NO5) at (-4,1)    {NO5(7)};
    \node[NO] (NO1) at (-1.5,1)  {NO1(3)};
    \node[SE] (SE3) at (1.5,0.5) {SE3(10)};
    
    \node[NO] (NO2) at (-2.5,-1) {NO2(4)};
    \node[SE] (SE4) at (3,-1)    {SE4(11)};
    
    \node[DK] (DK2) at (1,-2.5)  {DK2(1)};
    \node[DK] (DK1) at (-1,-3.5) {DK1(0)};

    \begin{scope}[ultra thick]
        \draw (NO4) -- node[edge label, above] {$l_{11}$} (SE1);
        \draw (NO4) -- node[edge label, left=6pt]  {$l_6$} (NO3);
        \draw (NO4) -- node[edge label, right=4pt] {$l_{12}$} (SE2);
        
        \draw (NO3) -- node[edge label, above=2pt] {$l_{13}$} (SE2);
        \draw (NO3) -- node[edge label, below left] {$l_7$} (NO1);
        
        \draw (SE1) -- node[edge label, above = 2pt ] {$l_5$} (SE2);
        \draw (SE1) -- node[edge label, right = 2pt] {$l_{15}$} (FI);
        
        \draw (SE2) -- node[edge label, left=3pt] {$l_4$} (SE3);
        \draw (FI)  -- node[edge label, above left = 2pt] {$l_{14}$} (SE3);
        
        \draw (NO5) -- node[edge label, above] {$l_8$} (NO1);
        \draw (NO5) -- node[edge label, below left] {$l_{10}$} (NO2);
        \draw (NO1) -- node[edge label, below right = 2pt] {$l_9$} (NO2);
        \draw (NO1) -- node[edge label, above] {$l_{16}$} (SE3);
        
        \draw (SE3) -- node[edge label, left = 4pt] {$l_1$} (DK1);
        \draw (SE3) -- node[edge label, above right= 2pt] {$l_3$} (SE4);
        
        \draw (SE4) -- node[edge label, below right = 2pt] {$l_2$} (DK2);
        \draw (DK2) -- node[edge label, below right = 2pt]  {$l_0$} (DK1);
    \end{scope}

\end{tikzpicture}
\caption{The Nordic day-ahead electricity market topology.}
\label{nordic:market}
\end{figure}
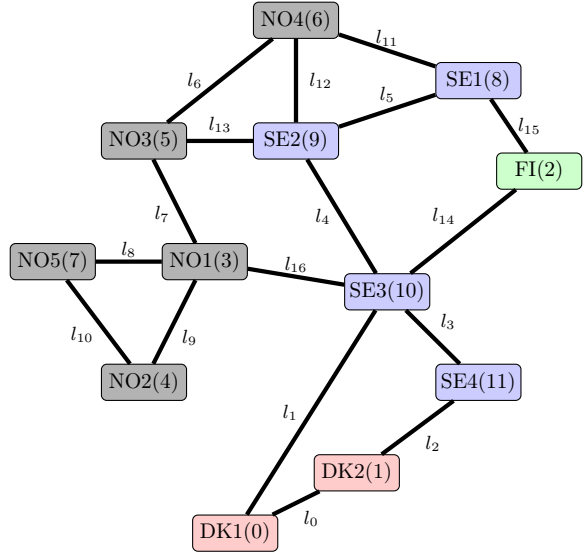\\
\begin{table}[htbp]
\centering
\begin{tabular}{lccccccc}
\toprule
$n_{E,d}$&$n_{E,s}$&$n_{PB,d}$&$n_{PB,s}$& $n_{RB,d}$& $n_{RB,s}$& $n_{FHB,d}$& $n_{FHB,s}$\\
\midrule
23 & 12 & 6&6 &6& 8&5&3\\
\bottomrule
\end{tabular}
\caption{The number of each order in bidding area 10 (SE3).}
\label{table:number:each:order}
\end{table}
\subsection{A stylized computation of the Nordic electricity market}\label{stylized:area:nordic:elec:market}
The network topology, shown in Fig.~\ref{nordic:market}, consists of $|\mathcal{A}| = 12$ nodes and $|L| = 17$ lines over $|\mathcal{T}| = 24$ time steps. The resulting mathematical model is substantial, containing 9,640 variables, including 2,248 binary decision variables per instance. Despite this complexity, sensitivity analysis using Algorithm~\ref{alg:com:penalty_coe} with different initial settings $\lambda$ and $\rho$  demonstrated robust performance, with a maximum computation time of 40 minutes.
In addition, under the sufficient condition in Corollary \ref{coro:consumer}, demand orders cannot be PROs, as their willingness to pay is sufficiently high. Any rejection of such orders would stem from structural constraints, such as linked-block constraint, etc. Consequently, potential PROs are limited to the supply side.
\par
\paragraph{\textbf{Order Surplus}} Table \ref{eleminate:PROS:12:bidding:area} presents order surpluses for potential PROs under both pricing mechanisms. The results indicate that the ALD mechanism effectively eliminates all PROs by rendering deviations unprofitable; for instance, deviations that previously appeared attractive (e.g., surpluses of 23,628.62 EUR and 55,188.95 EUR) result in significant losses under ALD (-320,933.42 EUR and -288,976.80 EUR, respectively). Similarly, as shown in Table \ref{eleminate:PAOS:12:bidding:area}, the ALD approach resolves all Paradoxically Accepted Orders (PAOs) inherent to the Fixed Binary-Variable (FBV) method. Specifically, a supply regular block order in Area 1 transits from a deficit of -241,980.01 EUR to a positive surplus of 14,553.13 EUR, while a flexible hourly order in Area 5 improves from -65,626.68 EUR to 45,515.78 EUR.
\par
\paragraph{\textbf{Congestion Rent}} Table \ref{tab:congestion_rent_comparison} details the congestion revenue of each transmission line. The aggregate TSO revenue under ALD pricing reaches 71,460.28 EUR, larger than the 3,120.43 EUR generated by the Fixed Binary-Variable method. This positive total revenue observation implies that for a general network structure, it can be revenue adequate for the TSO under the ALD pricing mechanism. It is noteworthy that ALD does not guarantee non-negative congestion rents on every individual line; for instance, Lines 3 and 5 exhibit negative rents of -83,937.70 EUR and -35,143.99 EUR, respectively. Finally, a structural analysis of Tables \ref{eleminate:PROS:12:bidding:area} and \ref{eleminate:PAOS:12:bidding:area} reveals that under Fixed Binary-Variable pricing, PROs consist exclusively of supply profile block orders, whereas PAOs are composed entirely of supply regular block orders and flexible hourly orders.
 \par
 \paragraph{\textbf{LMP and Congestion Price (CP) Analysis}} Fig.~\ref{lmp:four:areas} and Fig.~\ref{congestion:four:lines} display LMPs of four representative bidding areas and the congestion prices of four transmission lines, respectively. Under the ALD mechanism, the effective settlement LMPs are calculated as $\lambda_a^{t,*} + \rho$, with the penalty parameter set to $\rho = 30.58$ (where $\rho \in [\rho^*, \overline{\rho}]$). Congestion prices are defined based on the assumed flow directions: DK2$\rightarrow$SE4, NO1$\rightarrow$NO5, NO4$\rightarrow$SE1, and SE3$\rightarrow$FI. Both figures reveal a temporal pattern, with peak LMPs and congestion prices appearing in the time intervals [7, 11] and [17, 21]. As illustrated in Fig.~\ref{lmp:four:areas}, ALD-derived LMPs are generally higher than those from the Fixed Binary-Variable (FBV) pricing. However, as evidenced in Fig.~\ref{congestion:four:lines}, no consistent dominance relationship exists between the two mechanisms regarding congestion prices.
 \paragraph{\textbf{Welfare Analysis}} Finally, the aggregate net surplus across all orders is calculated as 111,539,574.81 EUR for the FBV method and 111,471,234.95 EUR for the ALD method. The Total Social Welfare, defined as the sum of the total order surplus and the TSO's congestion revenue, is
\[
\small 
\setlength{\arraycolsep}{2pt} 
\renewcommand{\arraystretch}{1.2}
\begin{array}{l r c c l} %
   \text{Fixed Binary-Variable:} & 111,539,574.81 &+& 3,120.43 &= \mathbf{111,542,695.24} (\text{EUR})\\
   \text{ALD:} & 111,471,234.95 &+& 71,460.28 &= \mathbf{111,542,695.24}(\text{EUR})
\end{array}
\]
The identical social welfare demonstrates no welfare loss, attributing to the sufficient condition in Corollary \ref{coro:consumer}. Significantly, these computational results validate Proposition \ref{revenue:tso}, confirming that the summation of individual surplus functions \eqref{individual:obj} characterizes the revenue allocated to the TSO.
\begin{figure}[htbp]
  \centering
  \begin{minipage}{0.48\textwidth}
    \centering
    \includegraphics[width=\linewidth]{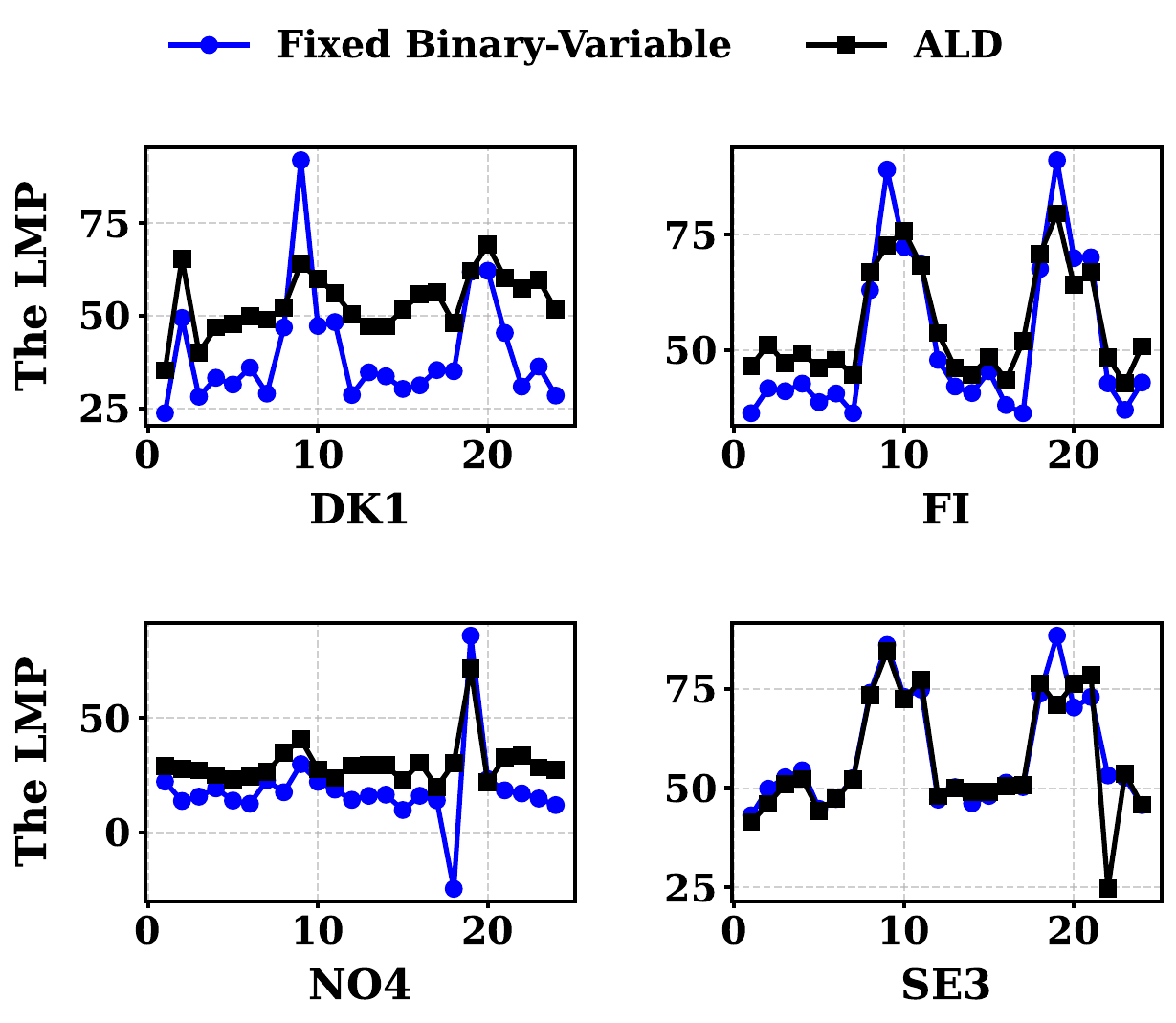}
    \caption{LMPs of DK1 FI NO4, and SE3.}
    \label{lmp:four:areas}
  \end{minipage}
  \hfill 
  \begin{minipage}{0.48\textwidth}
    \centering
    \includegraphics[width=\linewidth]{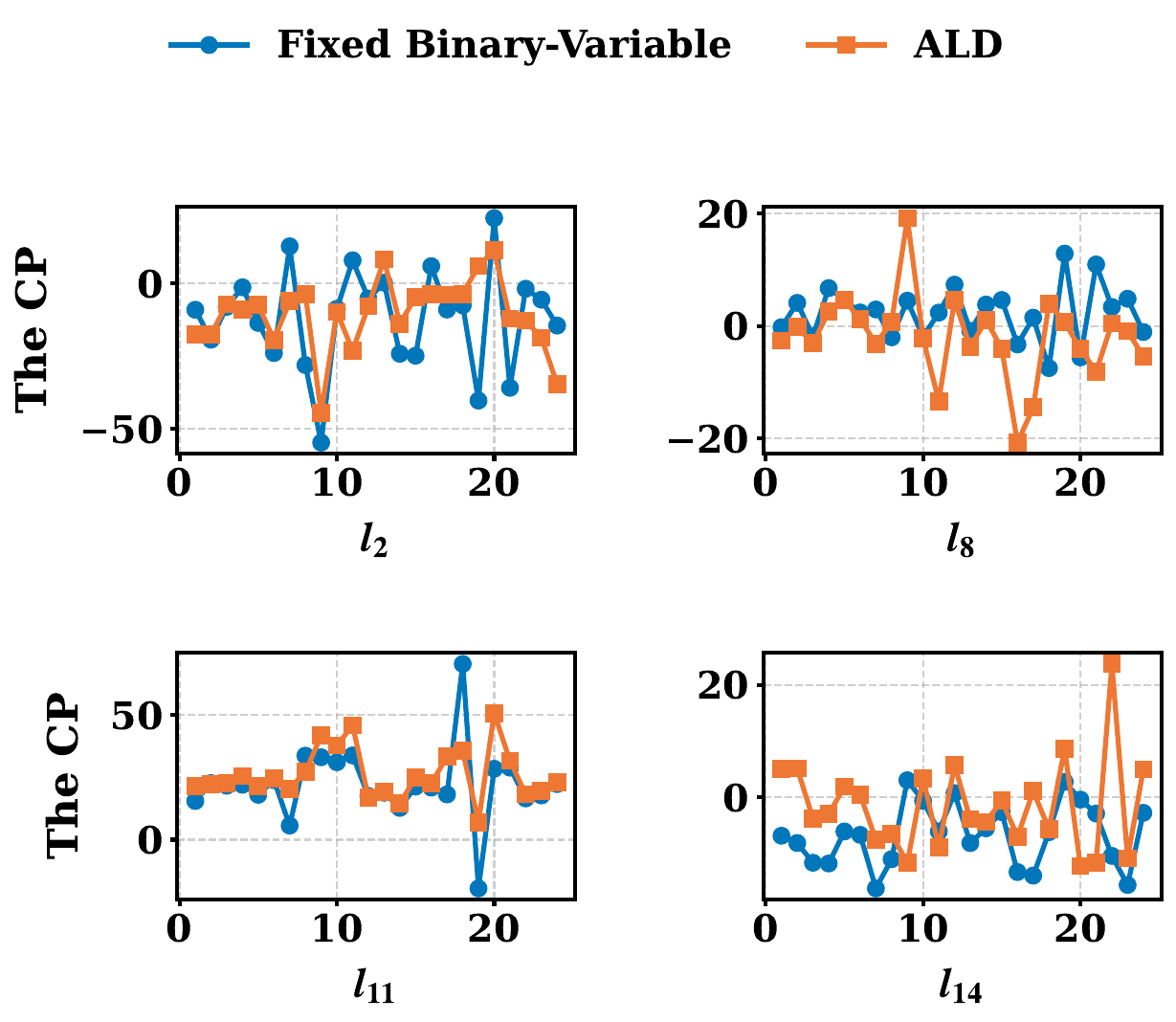}
    \caption{Congestion prices of $l_2$ $l_8$ $l_{11}$, and $l_{14}$.}
    \label{congestion:four:lines}
  \end{minipage}
\end{figure}
\begin{table}[p]
  \centering
  \caption{Surplus of Potential PROs under Deviation across the Two Pricing Mechanisms (values in EUR).}
  \label{eleminate:PROS:12:bidding:area}
  \begin{minipage}[b]{0.48\textwidth}
    \centering
    \begin{tabular}{llrr}
      \toprule
      Area & Type & ALD & \makecell[r]{Fixed \\ Binary-Variable} \\
      \midrule
      1 & PBs & -320933.42 & 23628.62 \\
      1 & PBs & -311859.08 & 20116.34 \\
      1 & PBs & -330400.36 & 29700.51 \\
      1 & PBs & -311803.45 & 21623.49 \\
      1 & PBs & -338078.87 & 31933.73 \\
      1 & PBs & -281835.01 & 29642.45 \\
      1 & PBs & -274894.84 & 34316.63 \\
      2 & PBs & -293594.67 & 46974.28 \\
      2 & PBs & -304967.75 & 66581.78 \\
      2 & PBs & -296806.04 & 61679.44 \\
      2 & PBs & -291870.20 & 59921.81 \\
      2 & PBs & -288976.80 & 55188.95 \\
      6 & PBs & -305562.97 & 9211.71 \\
      \bottomrule
    \end{tabular}
  \end{minipage}
  \hfill 
  \begin{minipage}[b]{0.48\textwidth}
    \centering
    \begin{tabular}{llrr}
      \toprule
      Area & Type & ALD & \makecell[r]{Fixed \\ Binary-Variable} \\
      \midrule
      6  & PBs & -292515.60 & 5584.97 \\
      7  & PBs & -341547.04 & 1461.39 \\
      10 & PBs & -178153.35 & 200758.35 \\
      10 & PBs & -194856.67 & 185135.18 \\
      10 & PBs & -187285.31 & 207090.16 \\
      10 & PBs & -195764.04 & 197509.82 \\
      10 & PBs & -201818.50 & 190176.82 \\
      10 & PBs & -177305.86 & 161165.21 \\
      11 & PBs & -303860.11 & 24955.95 \\
      11 & PBs & -332687.96 & 39930.19 \\
      11 & PBs & -307721.71 & 30519.63 \\
      11 & PBs & -286929.02 & 39001.60 \\
      11 & PBs & -290551.67 & 33186.83 \\
      \bottomrule
    \end{tabular}
  \end{minipage}
\end{table}
\begin{table}[htbp]
  \centering
  \caption{Surplus of PAOs under the Two Pricing Mechanisms (values in EUR).}
  \label{eleminate:PAOS:12:bidding:area}
  \begin{minipage}[b]{0.48\textwidth}
    \centering
    \begin{tabular}{llrr}
      \toprule
      Area & Type & ALD & \makecell[r]{Fixed \\ Binary-Variable} \\
      \midrule
0 &RBs& 14553.13& -241980.01\\
0&RBs &46802.42& -186208.88\\
0& RBs &37922.19& -217987.58\\
1 &RBs &69378.67& -103914.42\\
1& RBs &103046.33& -63429.62\\
1 &RBs &71680.03& -92404.38\\
2 &FHBs &21672.06& -16827.11\\
3& PBs &82489.51& -18668.97\\
3& RBs &214432.84& -143719.35\\
3& RBs &205945.53& -103015.72\\
3& RBs &192840.15& -131127.68\\
3& RBs &249548.58& -67867.13\\
3& FHBs &39548.60& -8649.98\\
4& RBs  &94217.03&-134868.43\\
4& RBs  &153001.69&-91859.72\\
      \bottomrule
    \end{tabular}
  \end{minipage}
  \hfill 
  \begin{minipage}[b]{0.48\textwidth}
    \centering
    \begin{tabular}{llrr}
      \toprule
      Area & Type & ALD & \makecell[r]{Fixed \\ Binary-Variable} \\
      \midrule
4& FHBs &58413.02& -11775.41\\
5& RBs  &311282.16&-78148.65\\
5& FHBs  &45515.78&-65626.68\\
6& RBs  &176777.06&-65749.15\\
6& FHBs  &45271.51&-6579.98\\
7& PBs  &72123.80&-4861.81\\
7& RBs  &155260.26&-92365.57\\
7& RBs  &173642.32&-86077.40\\
8& RBs & 93790.23&-210338.61\\
8& RBs  &51920.53&-224680.03\\
9& RBs  &162331.97&-75942.15\\
9& FHBs  &35304.13&-8964.91\\
11& RBs  &103812.06&-31966.78\\
11& RBs  &140318.61&-7339.24\\
11& FHBs & 25994.79&-21321.93\\
      \bottomrule
    \end{tabular}
  \end{minipage}
\end{table}
\begin{table}[htbp]
  \centering
  \caption{Congestion Rents across Lines under the Two Pricing Mechanisms (values in EUR).}
  \label{tab:congestion_rent_comparison}
  \begin{minipage}[b]{0.48\textwidth}
    \centering
    \begin{tabular}{lrr}
      \toprule
      Line & ALD & \makecell[r]{Fixed \\ Binary-Variable} \\
      \midrule
      0 & 8166.97 & 20368.84 \\
      1 & -152489.56 & -92750.42 \\
      2 & 74401.55 & 74936.62 \\
      3 & -83937.70 & -83308.50 \\
      4 & 124440.82 & 110795.22 \\
      5 & -35143.99 & -39330.08 \\
      6 & 22205.46 & 31606.41 \\
      7 & -3698.31 & 2307.88 \\
      \bottomrule
    \end{tabular}
  \end{minipage}
  \hfill %
  \begin{minipage}[b]{0.48\textwidth}
    \centering
    \begin{tabular}{lrr}
      \toprule
      Line & ALD & \makecell[r]{Fixed \\ Binary-Variable}\\
      \midrule
      8 & 21910.85 & 3972.68 \\
      9 & 3663.32 & -5400.03 \\
      10 & 8829.46 & 15065.89 \\
      11 & -79154.66 & -84060.20 \\
      12 & 1391.44 & -2152.80 \\
      13 & 4217.57 & 24297.86 \\
      14 & 62495.10 & 12103.53 \\
      15 & -31221.42 & 6754.30 \\
      16 & 57043.53 & 76253.08 \\
      \bottomrule
    \end{tabular}
  \end{minipage}
\end{table}
\section{Conclusion and the future work}\label{conclusion:future:work}
This paper proposes a novel pricing mechanism, termed the Augmented Lagrangian and Duality (ALD) pricing mechanism, derived from the strong duality framework of Mixed-Integer Linear Programming (MILP). We demonstrate that once the primal social welfare maximization problem is solved to optimality, the ALD pricing mechanism effectively eliminates all Paradoxically Rejected Orders (PROs) and Paradoxically Accepted Orders (PAOs) among supply orders. Furthermore, we establish that PAOs among demand orders can be eliminated provided that demand bid prices exceed the maximal supply offer prices. These properties are supported by rigorous theoretical proofs and are benchmarked with major pricing mechanisms in the literature to highlight the advantages of the ALD pricing approach. Finally, we have modified the SAVLR algorithm \citep{bragin2018scalable} to compute the exact penalty coefficients required to close the duality gap and the optimal dual variables. This modification ensures that the proposed ALD pricing mechanism is computationally tractable, as validated by the computational results.
\par
To further extend this research, several promising trajectories are identified: 1) Model Generalization: While this study formulates the market-clearing problem as an MILP, incorporating more complex features could lead to Mixed-Integer Quadratic Programming (MIQP) \citep{gu2020exact} or even Mixed-Integer Nonlinear Programming (MINLP)  formulations \citep{lefebvre2024exact}. Applying the proposed ALD framework to these more sophisticated models presents foreseeable challenges and a promising area for future exploration. 2) Algorithm Acceleration for 15-Minute Dispatch: In response to the European market's transition towards a 15-minute time interval, the number of integer variables will increase significantly. To maintain strict market-clearing deadlines under this resolution, future work will focus on augmenting the proposed modified SAVLR algorithm with advanced primal heuristics and parallel computing architectures to further accelerate the convergence of the primal combinatorial problem.
\section*{Acknowledgments}
    The authors thank the Swedish Energy Agency for the financial support of this research work (grant number: P2022-00738), the advisors from the Nord Pool, and the PhD students at KTH, energy market research group, for their useful comments.
    \par
    During the preparation of this work, Zhen Wang used Google Gemini in order to edit the language. After using this tool, the co-authors reviewed and edited the content as needed. We take full responsibility for the content of the published article.
    \par
 For reproducibility and further research, the ALD pricing implementation, modified SAVLR algorithm, and Nordic test cases are available on GitHub under the MIT license: \url{https://github.com/Zhen-Wang-Sudo/nordic-electricity-ald-codes}.
\bibliographystyle{elsarticle-harv}
\bibliography{reference}

@article{o2005efficient,
  title={Efficient market-clearing prices in markets with nonconvexities},
  author={O'Neill, Richard P and Sotkiewicz, Paul M and Hobbs, Benjamin F and Rothkopf, Michael H and Stewart Jr, William R},
  journal={European Journal of Operational Research},
  doi={https://doi.org/10.1016/j.ejor.2003.12.011},
  volume={164},
  number={1},
  pages={269--285},
  year={2005},
  publisher={Elsevier}
}

@article{feizollahi2017exact,
  title={Exact augmented Lagrangian duality for mixed integer linear programming},
  author={Feizollahi, Mohammad Javad and Ahmed, Shabbir and Sun, Andy},
  journal={Mathematical Programming},
  doi={https://doi.org/10.1007/s10107-016-1012-8},
  volume={161},
  pages={365--387},
  year={2017},
  publisher={Springer}
}

@article{bragin2015convergence,
  title={Convergence of the surrogate Lagrangian relaxation method},
  author={Bragin, Mikhail A and Luh, Peter B and Yan, Joseph H and Yu, Nanpeng and Stern, Gary A},
  journal={Journal of Optimization Theory and applications},
  doi = {https://doi.org/10.1007/s10957-014-0561-3},
  volume={164},
  pages={173--201},
  year={2015},
  publisher={Springer}
}

@article{zhao1999surrogate,
  title={Surrogate gradient algorithm for Lagrangian relaxation},
  author={Zhao, Xing and Luh, Peter B and Wang, Jihua},
  journal={Journal of optimization Theory and Applications},
  doi = {https://doi.org/10.1023/A:1022646725208},
  volume={100},
  pages={699--712},
  year={1999},
  publisher={Springer}
}

@article{bragin2018scalable,
  title={A scalable solution methodology for mixed-integer linear programming problems arising in automation},
  author={Bragin, Mikhail A and Luh, Peter B and Yan, Bing and Sun, Xiaorong},
  journal={IEEE Transactions on Automation Science and Engineering},
  doi = {10.1109/TASE.2018.2835298},
  volume={16},
  number={2},
  pages={531--541},
  year={2018},
  publisher={IEEE}
}

@article{chatzigiannis2016european,
  title={European day-ahead electricity market clearing model},
  author={Chatzigiannis, Dimitris I and Dourbois, Grigoris A and Biskas, Pandelis N and Bakirtzis, Anastasios G},
  journal={Electric Power Systems Research},
  doi = {https://doi.org/10.1016/j.epsr.2016.06.019},
  volume={140},
  pages={225--239},
  year={2016},
  publisher={Elsevier}
}

@article{bragin2016efficient,
  title={An efficient approach for solving mixed-integer programming problems under the monotonic condition},
  author={Bragin, Mikhail A and Luh, Peter B and Yan, Joseph H and Stern, Gary A},
  journal={Journal of Control and Decision},
  doi = {https://doi.org/10.1080/23307706.2015.1129916},
  volume={3},
  number={1},
  pages={44--67},
  year={2016},
  publisher={Taylor \& Francis}
}

@article{lefebvre2024exact,
  title={Exact Augmented Lagrangian Duality for Nonconvex Mixed-Integer Nonlinear Optimization},
  author={Lefebvre, Henri and Schmidt, Martin},
  year={2024},
journal = {optimization-online.org}
}

@article{gu2020exact,
  title={Exact augmented Lagrangian duality for mixed integer quadratic programming},
  author={Gu, Xiaoyi and Ahmed, Shabbir and Dey, Santanu S},
  journal={SIAM Journal on Optimization},
  doi = {https://doi.org/10.1137/19M127169},
  volume={30},
  number={1},
  pages={781--797},
  year={2020},
  publisher={SIAM}
}

@article{azizan2020optimal,
  title={Optimal pricing in markets with nonconvex costs},
  author={Azizan, Navid and Su, Yu and Dvijotham, Krishnamurthy and Wierman, Adam},
  journal={Operations Research},
  doi = {https://doi.org/10.1287/opre.2019.1900},
  volume={68},
  number={2},
  pages={480--496},
  year={2020},
  publisher={INFORMS}
}

@article{araoz2011semi,
  title={{Semi-Lagrangian} approach for price discovery in markets with non-convexities},
  author={Araoz, Veronica and J{\"o}rnsten, Kurt},
  journal={European Journal of Operational Research},
  doi = {https://doi.org/10.1016/j.ejor.2011.05.009},
  volume={214},
  number={2},
  pages={411--417},
  year={2011},
  publisher={Elsevier}
}

@ARTICLE{rui2012,
  author={Ruiz, Carlos and Conejo, Antonio J. and Gabriel, Steven A.},
  journal={IEEE Transactions on Power Systems}, 
  title={Pricing Non-Convexities in an Electricity Pool}, 
  year={2012},
  volume={27},
  number={3},
  pages={1334-1342},
  doi={10.1109/TPWRS.2012.2184562}}

@article{schiro2015convex,
  title={Convex hull pricing in electricity markets: Formulation, analysis, and implementation challenges},
  author={Schiro, Dane A and Zheng, Tongxin and Zhao, Feng and Litvinov, Eugene},
  journal={IEEE Transactions on Power Systems},
  doi = {10.1109/TPWRS.2015.2486380},
  volume={31},
  number={5},
  pages={4068--4075},
  year={2016},
  publisher={IEEE}
}

@article{guo2025copositive,
  title={Copositive duality for discrete energy markets},
  author={Guo, Cheng and Bodur, Merve and Taylor, Joshua A},
  journal={Management Science},
  doi = {https://doi.org/10.1287/mnsc.2023.00906},
  year={2025},
  publisher={INFORMS}
}

@article{chen2010extended,
  title={Extended duality for nonlinear programming},
  author={Chen, Yixin and Chen, Minmin},
  journal={Computational optimization and applications},
  doi = {https://doi.org/10.1007/s10589-008-9208-3},
  volume={47},
  number={1},
  pages={33--59},
  year={2010},
  publisher={Springer}
}

@article{madani2018convex,
  title={{Convex hull, IP and European electricity pricing in a European power exchanges setting with efficient computation of convex hull prices}},
  author={Madani, Mehdi and Ruiz, Carlos and Siddiqui, Sauleh and Van Vyve, Mathieu},
  journal={arXiv preprint arXiv:1804.00048},
  doi = {https://doi.org/10.48550/arXiv.1804.00048},
  year={2018}
}

@article{madani2015computationally,
  title={{Computationally efficient MIP formulation and algorithms for European day-ahead electricity market auctions}},
  author={Madani, Mehdi and Van Vyve, Mathieu},
  journal={European Journal of Operational Research},
  doi = {https://doi.org/10.1016/j.ejor.2014.09.060},
  volume={242},
  number={2},
  pages={580--593},
  year={2015},
  publisher={Elsevier}
}

@article{gribik2007market,
  title={Market-clearing electricity prices and energy uplift},
  author={Gribik, Paul R and Hogan, William W and Pope, Susan L and others},
  journal={Cambridge, MA},
  pages={1--46},
  year={2007}
}

@article{bjorndal2008equilibrium,
  title={Equilibrium prices supported by dual price functions in markets with non-convexities},
  author={Bj{\o}rndal, Mette and J{\"o}rnsten, Kurt},
  journal={European Journal of Operational Research},
  doi = {https://doi.org/10.1016/j.ejor.2007.06.050},
  volume={190},
  number={3},
  pages={768--789},
  year={2008},
  publisher={Elsevier}
}

@article{toczylowski2009new,
  title={A new pricing scheme for a multi-period pool-based electricity auction},
  author={Toczy{\l}owski, Eugeniusz and Zoltowska, Izabela},
  journal={European Journal of Operational Research},
  doi = {https://doi.org/10.1016/j.ejor.2007.12.048},
  volume={197},
  number={3},
  pages={1051--1062},
  year={2009},
  publisher={Elsevier}
}

@article{ahunbay2025pricing,
  title={Pricing optimal outcomes in coupled and non-convex markets: Theory and applications to electricity markets},
  author={Ahunbay, Mete {\c{S}}eref and Bichler, Martin and Kn{\"o}rr, Johannes},
  journal={Operations Research},
  doi = {https://doi.org/10.1287/opre.2023.0401},
  volume={73},
  number={1},
  pages={178--193},
  year={2025},
  publisher={INFORMS}
}

@article{ahunbay2024solving,
  title={Solving large-scale electricity market pricing problems in polynomial time},
  author={Ahunbay, Mete {\c{S}}eref and Bichler, Martin and Dobos, Teodora and Kn{\"o}rr, Johannes},
  journal={European Journal of Operational Research},
  doi = {https://doi.org/10.1016/j.ejor.2024.05.020},
  volume={318},
  number={2},
  pages={605--617},
  year={2024},
  publisher={Elsevier}
}

@article{byers2023long,
  title={Long-run optimal pricing in electricity markets with non-convex costs},
  author={Byers, Conleigh and Hug, Gabriela},
  journal={European Journal of Operational Research},
  doi = {https://doi.org/10.1016/j.ejor.2022.07.052},
  volume={307},
  number={1},
  pages={351--363},
  year={2023},
  publisher={Elsevier}
}

@article{byers2022economic,
  title={Economic impacts of near-optimal solutions with non-convex pricing},
  author={Byers, Conleigh and Hug, Gabriela},
  journal={Electric Power Systems Research},
  doi = {https://doi.org/10.1016/j.epsr.2022.108287},
  volume={211},
  pages={108287},
  year={2022},
  publisher={Elsevier}
}

@article{andrianesis2021computation,
  title={Computation of convex hull prices in electricity markets with non-convexities using dantzig-wolfe decomposition},
  author={Andrianesis, Panagiotis and Bertsimas, Dimitris and Caramanis, Michael C and Hogan, William W},
  journal={IEEE Transactions on Power Systems},
  doi = {10.1109/TPWRS.2021.3122000},
  volume={37},
  number={4},
  pages={2578--2589},
  year={2021},
  publisher={IEEE}
}

@article{herrero2015electricity,
  title={Electricity market-clearing prices and investment incentives: The role of pricing rules},
  author={Herrero, Ignacio and Rodilla, Pablo and Batlle, Carlos},
  journal={Energy Economics},
  doi = {https://doi.org/10.1016/j.eneco.2014.10.024},
  volume={47},
  pages={42--51},
  year={2015},
  publisher={Elsevier}
}

@article{yang2019unified,
  title={A unified approach to pricing under nonconvexity},
  author={Yang, Zhifang and Zheng, Tongxin and Yu, Juan and Xie, Kaigui},
  journal={IEEE Transactions on Power Systems},
  doi = {10.1109/TPWRS.2019.2911419},
  volume={34},
  number={5},
  pages={3417--3427},
  year={2019},
  publisher={IEEE}
}

@article{kuang2019pricing,
  title={Pricing in non-convex markets with quadratic deliverability costs},
  author={Kuang, Xiaolong and Lamadrid, Alberto J and Zuluaga, Luis F},
  journal={Energy Economics},
  doi = {https://doi.org/10.1016/j.eneco.2018.12.022},
  volume={80},
  pages={123--131},
  year={2019},
  publisher={Elsevier}
}

@article{vlachos2016comparison,
  title={{Comparison of two mathematical programming models for the solution of a convex portfolio-based European day-ahead electricity market}},
  author={Vlachos, AG and Dourbois, GA and Biskas, PN},
  journal={Electric Power Systems Research},
  doi = {https://doi.org/10.1016/j.epsr.2016.08.007},
  volume={141},
  pages={313--324},
  year={2016},
  publisher={Elsevier}
}

@article{mays2021investment,
  title={Investment effects of pricing schemes for non-convex markets},
  author={Mays, Jacob and Morton, David P and O’Neill, Richard P},
  journal={European Journal of Operational Research},
  doi = {https://doi.org/10.1016/j.ejor.2020.07.026},
  volume={289},
  number={2},
  pages={712--726},
  year={2021},
  publisher={Elsevier}
}

@article{knueven2022computationally,
  title={A computationally efficient algorithm for computing convex hull prices},
  author={Knueven, Bernard and Ostrowski, James and Castillo, Anya and Watson, Jean-Paul},
  journal={Computers $\&$ Industrial Engineering},
  doi = {https://doi.org/10.1016/j.cie.2021.107806},
  volume={163},
  pages={107806},
  year={2022},
  publisher={Elsevier}
}

@article{sleisz2019new,
  title={{New formulation of power plants’ general complex orders on European electricity markets}},
  author={Sleisz, {\'A}d{\'a}m and Div{\'e}nyi, D{\'a}niel and Raisz, D{\'a}vid},
  journal={Electric Power Systems Research},
  doi = {https://doi.org/10.1016/j.epsr.2018.12.028},
  volume={169},
  pages={229--240},
  year={2019},
  publisher={Elsevier}
}

@article{martin2014strict,
  title={{Strict linear prices in non-convex European day-ahead electricity markets}},
  author={Martin, Alexander and M{\"u}ller, Johannes C and Pokutta, Sebastian},
  journal={Optimization Methods and Software},
  doi = {https://doi.org/10.1080/10556788.2013.823544},
  volume={29},
  number={1},
  pages={189--221},
  year={2014},
  publisher={Taylor \& Francis}
}

@article{ilea2018european,
  title={European day-ahead electricity market coupling: Discussion, modeling, and case study},
  author={Ilea, Valentin and Bovo, Cristian and others},
  journal={Electric Power Systems Research},
  doi = {https://doi.org/10.1016/j.epsr.2017.10.003},
  volume={155},
  pages={80--92},
  year={2018},
  publisher={Elsevier}
}

@article{madani2017mip,
  title={{A MIP framework for non-convex uniform price day-ahead electricity auctions}},
  author={Madani, Mehdi and Van Vyve, Mathieu},
  journal={EURO Journal on Computational Optimization},
  doi = {https://doi.org/10.1007/s13675-015-0047-6},
  volume={5},
  number={1},
  pages={263--284},
  year={2017},
  publisher={Springer}
}

@article{stevens2024some,
  title={{On some advantages of convex hull pricing for the European electricity auction}},
  author={Stevens, Nicolas and Papavasiliou, Anthony and Smeers, Yves},
  journal={Energy Economics},
  doi = {https://doi.org/10.1016/j.eneco.2024.107542},
  volume={134},
  pages={107542},
  year={2024},
  publisher={Elsevier}
}

@article{madani2018revisiting,
  title={Revisiting minimum profit conditions in uniform price day-ahead electricity auctions},
  author={Madani, Mehdi and Van Vyve, Mathieu},
  journal={European Journal of Operational Research},
  doi = {https://doi.org/10.1016/j.ejor.2017.10.024},
  volume={266},
  number={3},
  pages={1072--1085},
  year={2018},
  publisher={Elsevier}
}

@article{liberopoulos2016critical,
  title={Critical review of pricing schemes in markets with non-convex costs},
  author={Liberopoulos, George and Andrianesis, Panagiotis},
  journal={Operations Research},
  doi = {https://doi.org/10.1287/opre.2015.1451},
  volume={64},
  number={1},
  pages={17--31},
  year={2016},
  publisher={INFORMS}
}

@article{wang2013subgradient,
  title={The subgradient simplex cutting plane method for extended locational marginal prices},
  author={Wang, Congcong and Peng, Tengshun and Luh, Peter B and Gribik, Paul and Zhang, Li},
  journal={IEEE Transactions on Power Systems},
  doi = {10.1109/TPWRS.2013.2243173},
  volume={28},
  number={3},
  pages={2758--2767},
  year={2013},
  publisher={IEEE}
}

@article{yu2020extended,
  title={An extended integral unit commitment formulation and an iterative algorithm for convex hull pricing},
  author={Yu, Yanan and Guan, Yongpei and Chen, Yonghong},
  journal={IEEE Transactions on Power Systems},
  doi = {10.1109/TPWRS.2020.2993027},
  volume={35},
  number={6},
  pages={4335--4346},
  year={2020},
  publisher={IEEE}
}

@article{hua2016convex,
  title={A convex primal formulation for convex hull pricing},
  author={Hua, Bowen and Baldick, Ross},
  journal={IEEE Transactions on Power Systems},
    doi={10.1109/TPWRS.2016.2637718},
  volume={32},
  number={5},
  pages={3814--3823},
  year={2016},
  publisher={IEEE}
}

@book{wood2013power,
  title={Power generation, operation, and control},
  author={Wood, Allen J and Wollenberg, Bruce F and Shebl{\'e}, Gerald B},
  isbn= {0002011697},
  year={2013},
  publisher={John wiley \& sons}
}

@inproceedings{milgrom2022linear,
  title={Linear pricing mechanisms for markets without convexity},
  doi = {https://doi.org/10.1145/3490486.3538310},
  author={Milgrom, Paul and Watt, Mitchell},
  booktitle={Proceedings of the 23rd ACM Conference on Economics and Computation},
  pages={300--300},
  year={2022}
}
\newpage
\section*{Supplementary Material}
\section*{Introduction}
This document provides supplementary proofs and illustrative examples to further explain the lemmas and theorems presented in Section 3 of the paper.
\section*{Proof of [Lemma 1, Section 3.1]
}\label{proof:lemma:has:subgradeint}
\begin{proof}
\par For the variable $\lambda\in \mathbb{R}^{|\mathcal{A}|\times |\mathcal{T}|}$, the inner-level optimization problem is a minimization problem of a set of linear functions with respect to $\lambda$. Therefore, the objective function $z_\rho^{ALD}(\lambda)$ of the dual problem is piecewise linear, and due to that, given a vector $\lambda_1\in \mathbb{R}^{|\mathcal{A}|\times |\mathcal{T}|}$, the gradients of $z_\rho^{ALD}(\lambda_1)$ can be different from different directions. For a one-dimensional case, the derivative of $z_\rho^{ALD}(\lambda_1)$ from the left hand side can be different from that of the right hand side. Therefore, $z_\rho^{ALD}(\lambda)$ is non-differentiable.
\par 
The dual variable $\lambda\in \mathbb{R}^{|\mathcal{A}|\times |\mathcal{T}|}$, and therefore the feasible domain is convex. Given $\lambda_1\in \mathbb{R}^{|\mathcal{A}|\times |\mathcal{T}|},\lambda_2\in \mathbb{R}^{|\mathcal{A}|\times |\mathcal{T}|},\alpha \in [0,1]$, we have
\begin{align*}
    &z_\rho^{ALD}(\alpha\lambda_1 + (1-\alpha) \lambda_2)\\
    &=\min_{x\in X,p\in P} {L_\rho(x,p,\alpha\lambda_1 + (1-\alpha) \lambda_2)}\\
    &\geq \alpha \min_{x\in X,p\in P} {L_\rho(x,p,\lambda_1)} +(1-\alpha) \min_{x\in X,p\in P} {L_\rho(x,p,\lambda_2)} \\
    & = \alpha z_\rho^{ALD}(\lambda_1) + (1-\alpha) z_\rho^{ALD}(\lambda_2).
\end{align*}
Hence, the dual optimization problem is concave. Since the objective function  $z_\rho^{ALD}(\lambda)$ is concave and piecewise linear, it is sub-differentiable. Additionally, the subgradient of the dual objective function $z_\rho^{ALD}(\lambda) $ at a vector ${\lambda^* \in \mathbb{R}^{|\mathcal{A}|\times |\mathcal{T}|}}$ denoted as  $\partial z_\rho^{ALD}(\lambda^*) $ satisfies,
\begin{align*}
z_\rho^{ALD}(\lambda)\leq z_\rho^{ALD}(\lambda^*) + \partial z_\rho^{ALD}(\lambda^*) ^\top (\lambda -\lambda^*).
\end{align*}
\end{proof}
\section*{Example 1}\label{appendix:1}
The examples in this section complement [Corollary 1, Theorem 4, Section 3.2.2]. They serve to demonstrate the possibility of negative surplus of demand orders under the ALD pricing mechanism, validate the proposed remedy, and illustrate the resulting congestion price signals. All prices are expressed in EUR/MWh. The cost, revenue, surplus, etc are in EUR.
\par 
\textbf{1)} Consider the following optimization problem.
\begin{equation}\label{counter:example:1}
\begin{aligned}
    &\min{-23 y_1  -10 y_2-16 y_3+ 14 x_1 + 20 x_2 },\\
    &\text{or equivalently}\\
     &\max{23 y_1  +10 y_2+16 y_3- 14 x_1 - 20 x_2 },\\
    &s.t. ~~  p_1= x_1 - y_3, p_2 = x_2 - y_1 -y_2,\\
    &\qquad f = p_1, p_2 = - p_1, f\in [-100, 100],\\
    &\qquad y_1\in [0,100],
     y_2\in [0,20],
     y_3\in \{0,50\},
     x_1 \in \{0,120\},
    x_2 \in \{0,50\}.
    \end{aligned}
\end{equation}
In Problem \ref{counter:example:1}, we consider a two-area market consisting of two supply block orders ($x_1, x_2$), one demand block order ($y_3$), and two demand elementary orders ($y_1, y_2$). The cost minimization problem yields an optimal objective value of $-730$ (corresponding to a welfare of $730$ EUR), with the optimal solution: $x_1^* = 120, x_2^* = 0, y_1^* = 70, y_2^* = 0, y_3^* = 50$, and a cross-zonal flow $f^* = 70$. 
\par
Under the ALD approach, the dual solution is characterized by $\lambda_1 \in [14, 14.5]$ and $\lambda_2 \in [14, 20]$ under the penalty coefficient threshold $\rho^* = 9$ required to close the duality gap. Applying the ALD pricing mechanism [Def. 1, Section 3.2], the resulting price signals are $LMP_1 = 14$ (EUR/MWh) and $LMP_2 = 14$ (EUR/MWh). Consequently, the demand order $y_3$ has a commodity cost of $14 \times 50$ (EUR) and a non-convex charge of $9 \times 50$ (EUR), leading to a surplus of $16 \times 50 - (14 \times 50 + 9 \times 50) = -350$ (EUR). This negative surplus identifies $y_3$ as a Paradoxically Accepted Order (PAO), which is due to the fact that certain demand bid prices are lower than the supply offer prices.
\par
\textbf{Remedial Approach}: For the demand order $y_3$, if a transfer payment of $2 \times 9 \times 50 = 900$ (EUR) is applied, the adjusted surplus becomes $(16 + 9 - 14) \times 50 = 550$ (EUR). This confirms that the proposed remedy restores the individual revenue adequacy of the paradoxically accepted orders among demand orders.
\par
\textbf{Congestion Price Signals}: Furthermore, this example reveals that the raw ALD pricing mechanism may fail to yield a congestion price signal despite the presence of network congestion. Note that the cross-zonal price difference by ALD pricing is $LMP_1 - LMP_2 = 14 -14 = 0$ (EUR/MWh). For comparison, the LP relaxation provides optimal dual variables $\lambda_1 = 16$ and $\lambda_2 \in [16, 20]$. By selecting $LMP_1 = 16$ (EUR/MWh) and $LMP_2 = 17$ (EUR/MWh), the resulting congestion price for the TSO is $1$ (EUR/MWh). Note that the price signals provided by LP relaxation also effectively eliminate both PAOs and PROs.
\par
This case highlights a critical limitation: raw ALD dual variables may fail to internalize network congestion even when flow limits are active. To address this, the introduction of the parameter $\eta$ (as proposed in [Definition 1, Section 3.2]) provides the necessary flexibility to restore congestion price signals. This ensures revenue adequacy for the TSO, particularly in scenarios of network congestion, while maintaining the theoretical advantages of the ALD framework. Note that by adding $\eta = 1$, the ALD pricing mechanism gives $LMP_1 = 14$ (EUR/MWh) and $LMP_2 = 15$ (EUR/MWh), implying a congestion price of 1 EUR/MWh for the TSO.
\par
\textbf{2) Scenario with excluded non-competitive bids:} When demand orders with bid prices below supply offers are excluded, the market-clearing problem is formulated as follows:
\begin{equation}\label{counter:example:2}
\begin{aligned}&\max \quad 23 y_1 + 20 y_2 + 16 y_3 - 14 x_1 - 10 x_2 \\
&\text{s.t.} \quad p_1 = x_1 - y_3, \quad p_2 = x_2 - y_1 - y_2 \\
&\qquad f = p_1, \quad p_2 = - p_1, \quad f \in [-100, 100] \\
&\qquad y_1 \in [0, 100], \quad y_2 \in [0, 20], \quad y_3 \in \{0, 50\}\\
&\qquad x_1 \in \{0, 120\}, \quad x_2 \in \{0, 50\}
\end{aligned}
\end{equation}
The optimal solution is $x_1^* = 120, x_2^* = 50, y_1^* = 100, y_2^* = 20, y_3^* = 50$, indicating full acceptance of all orders with a cross-zonal flow $f^* = 70$. Under the ALD framework, the solution is characterized by $\lambda_1 = 13$ and $\lambda_2 \in [13, 15]$. Setting $\rho = 1$ (where $\rho > \rho^* = 0$) and applying the mechanism in [Def. 1, Section 3.2], the prices are determined as $LMP_1 = 13$ (EUR/MWh) and $LMP_2 = 14$ (EUR/MWh). This yields $\eta = 1$ (EUR/MWh), representing the congestion price for the TSO. Furthermore, all orders satisfy the revenue adequacy requirement. In contrast, while LP relaxation-based prices (e.g., $LMP_1 = LMP_2 = 14$ or $15$ (EUR/MWh)) also eliminate PAOs and PROs, they do not generate a congestion price for the TSO and thus fail to represent network congestion.
 \section*{Case Study: Dependency and Dispatch Restrictions}\label{appendix:2}This example illustrates a scenario where the market-clearing price exceeds a specific supply order's offer price, yet the order's dispatch remains at zero due to feasibility constraints. Consider the following problem:
 \begin{equation}\label{a:LP}
 \begin{aligned}
 \min \quad & -50 y_1 - 35 y_2 + 40 x_1 + 30 x_2 + 32 x_3 \\
 \text{s.t.} \quad & x_1 + x_2 = y_1 + y_2, \quad (\lambda) \\
 & x_3 \leq x_1, \quad (\xi) \\& y_1 \in [0, 50], \quad y_2 \in [0, 50], \quad x_1 \in [0, 30] \\
 & x_2 \in [0, 50], \quad x_3 \in [0, 30]
 \end{aligned}
 \end{equation}
 Obviously, Problem \eqref{a:LP} is an LP, where there are two demand orders ($y_1, y_2$) and three supply orders ($x_1, x_2, x_3$). The optimal solution is $y_1^* = 50, y_2^* = 0, x_1^* = 0, x_2^* = 50, x_3^* = 0$, yielding the optimal objective value of $-1000$. Any $\lambda^* \in [35, 40]$ constitutes an optimal dual solution. Although $\lambda^* > 32$ (the offer price of $x_3$), the supply order $x_3$ is not classified as a Paradoxically Rejected Order (PRO). This is because its feasibility is dependent on the dispatch of its parent order ($x_1$). Following [Definition 2, Section 3.2], we first evaluate the individual optimization of the parent order $x_1$:\begin{equation}\label{indivi:x:1}\max \quad (\lambda^* - 40) x_1, \quad \text{s.t.} \quad x_1 \in [0, 30]\end{equation}Since $\lambda^* \leq 40$, $x_1^* = 0$ is an optimal solution. Subsequently, the individual optimization for the child order $x_3$ is formulated as:\begin{equation}\label{indivi:x:3}\max \quad (\lambda^* - 32) x_3, \quad \text{s.t.} \quad 0 \leq x_3 \leq x_1^*\end{equation}Given $x_1^* = 0$, the only feasible solution for \eqref{indivi:x:3} is $x_3^* = 0$. Consequently, $x_3$ is not a PRO as its zero-dispatch is a requirement of feasibility rather than a rejection of a profitable bid.
  \section*{Case Study: Elimination of PROs and PAOs}\label{appendix:3}This example demonstrates how the ALD pricing mechanism eliminates Paradoxically Rejected Orders (PROs) and Paradoxically Accepted Orders (PAOs) in a Unit Commitment (UC) framework. Consider the following optimization problem (Pure Integer Programming):
  \begin{equation}\label{an:example:to:eliminate:PAO:PRO}
  \begin{aligned}\min \quad & 100 x_1 + 315 x_2 + 45 x_3 \\
  \text{s.t.} \quad & 4 x_1 + 9 x_2 + x_3 = 10, \quad x_i \in \{0, 1\}, i= 1, 2, 3.
  \end{aligned}
  \end{equation}
  Three supply block orders produce electricity to meet a 10 MWh demand. The offer prices are 25, 35, and 45 EUR/MWh, with capacities of 4, 9, and 1 MWh, respectively. The optimal solution is $x_1^* = 0, x_2^* = 1, x_3^* = 1$, with a total cost of 360 EUR. Under LP relaxation, the LMP is 35 EUR/MWh. At this price, the supply order 1 ($x_1^*=0$) becomes a PRO because its offer (25 EUR/MWh) is below the price, while the supply order 3 ($x_3^*=1$) becomes a PAO because its offer (45 EUR/MWh) exceeds the price. Applying the ALD solution ($\lambda^* = 31.7, \rho = 13.37$), we evaluate the performance of the proposed ALD pricing mechanism as follows:
  \begin{itemize}
  \item \textbf{Supply order 1 ($x_1^* = 0$):} By following the centralized dispatch, its surplus is zero. If it were to deviate to $x_1 = 1$, it would receive a commodity revenue of $31.7 \times 4$ EUR, but incur a cost of $25 \times 4$ EUR, and a non-convex penalty of $13.37 \times 4$ EUR. The resulting surplus, $(31.7 - 25 - 13.37) \times 4 = -26.68$ EUR, which is negative. Thus, the supply order 1 has no incentive to deviate, eliminating the PRO status.
  \item \textbf{Supply order 3 ($x_3^* = 1$):} By following the dispatch, it receives a total payment (commodity revenue plus non-convex reward) of $(31.7 + 13.37) \times 1 = 45.07$ EUR. Deducting the cost ($45 \times 1$) EUR, the surplus of supply order 3 is $0.07$ EUR. Since its surplus is non-negative, the PAO condition is resolved.
  \end{itemize}
  Furthermore, it can be verified that under the ALD pricing mechanism, supply order 2 has a non-negative surplus and has no incentive to deviate from $x_2^* = 1$; supply order 3 has no incentive to deviate from $x_3^* = 1$. Consequently, the ALD mechanism ensures individual revenue adequacy for supply orders and incentive compatibility for each order.

\end{document}